\newif\iffinalrun
	\newcommand{\need}[1]{}
	\newcommand{\mar}[1]{}
	\newcommand{\need}[1]{{ \tiny *** #1}}
	\newcommand{\mar}[1]{\marginpar{\raggedright\tiny #1}}
\title{Modularity of trianguline Galois representations}
\author[R. Bellovin]{Rebecca Bellovin}
\begin{document}

    \begin{abstract}
	    We use the theory of trianguline $(\varphi,\Gamma)$-modules over pseudorigid spaces to prove a modularity lifting theorem for certain Galois representations which are trianguline at $p$, including those with characteristic $p$ coefficients.  The use of pseudorigid spaces lets us construct integral models of the trianguline varieties of ~\cite{breuil-hellmann-schraen}, ~\cite{chenevier2013} after bounding the slope, and we carry out a Taylor--Wiles patching argument for families of overconvergent modular forms.  This permits us to construct a patched quaternionic eigenvariety and deduce our modularity results.
    \end{abstract}

    \maketitle
    
\section{Introduction}

The Fontaine--Mazur conjecture predicts that representations of Galois groups of number fields which are sufficiently nice should come from geometry.  In practice, the way one proves this is by proving so-called automorphy lifting theorems, relating the Galois representations of interest to Galois representations already known to have the desired properties.

In this context, if $\rho:\Gal_{F}\rightarrow \GL_n(\overline\Q_p)$ is the representation, ``sufficiently nice'' includes a condition on the local Galois group at $p$ called being \emph{geometric}.  In the present paper, motivated by a question of Andreatta--Iovita--Pilloni~\cite{aip2018}, we consider a characteristic $p$ analogue of this conjecture.  There is no definition of ``geometric'' for a Galois representation with positive characteristic coefficients, but we replace it with the condition \emph{trianguline}:
\begin{thm-intro}\label{thm: main thm}
	Assume $p\geq 5$, and let $L$ be a finite extension of $\F_p(\!(u)\!)$.  Let $\rho:\Gal_{\Q}\rightarrow \GL_2(\O_L)$ be an odd continuous Galois representation unramified away from $p$ such that the $(\varphi,\Gamma)$-module $D_{\rig}(\rho|_{\Gal_{Q_p}})$ is trianguline with regular parameters.  Assume moreover that the reduction $\overline\rho$ is modular and satisfies certain additional technical hypotheses.  Then $\rho$ is the twist of the Galois representation corresponding to a point on the extended eigencurve $\mathscr{X}_{\GL_2}$.
\end{thm-intro}

The eigencurve $\mathscr{X}_{\GL_2}^{\rig}$ was originally constructed by Coleman--Mazur, and it is a rigid analytic space whose points correspond to \emph{overconvergent modular forms}.  Points corresponding to classical eigenforms (of varying weight and level) are dense, so we can think of it as a moduli space of $p$-adic modular forms.  Each point of the eigencurve has a Galois representation attached, but Kisin~\cite{kisin03} showed that the Galois representations at non-classical points are not geometric at $p$.  Instead, they are trianguline (though he did not use this terminology; it was introduced subsequently by Colmez).  A converse was proved by Emerton ~\cite[Theorem 1.2.4]{emerton2011} when the coefficients are $p$-adic.

Given a $p$-adic Galois representation $\rho$, there is an associated object $D_{\rig}(\rho)$ called a $(\varphi,\Gamma)$-module; at the expense of making the coefficients more complicated, the Galois representation can be captured as the action of a semi-linear operator $\varphi$ together with the action of a $1$-dimensional $p$-adic Lie group $\Gamma$.  Then even if $\rho$ is irreducible, it is possible for $D_{\rig}(\rho)$ to be reducible.  Kisin showed that this happens in small neighborhoods of classical points on the eigencurve; if $\rho_x$ is the Galois representation attached to a point $x$, there is an exact sequence
\[	0\rightarrow D_1\rightarrow D_{\rig}(\rho_x)\rightarrow D_2\rightarrow 0	\]
where $D_1$ and $D_2$ are rank-$1$ $(\varphi,\Gamma)$-modules. There is a basis element $\mathbf{e}_1$ of $D_1$ such that $\varphi$ acts on $\mathbf{e}_1$ by the $U_p$-eigenvalue at $x$ and $\Gamma$ acts on $\mathbf{e}_1$ trivially.  This construction was extended over (a normalization of) the eigencurve in separate work of ~\cite{kpx} and ~\cite{liu}.

The eigencurve is equipped with a map $\mathrm{wt}:\mathscr{X}_{\GL_2}^{\rig}\rightarrow \mathscr{W}^{\rig}$ to \emph{weight space}, which we may view as the disjoint union of $p-1$ rigid analytic open unit disks.  The existence of Galois representations attached to eigenforms means it is also equipped with a morphism $\mathscr{X}_{\GL_2}^{\rig}\rightarrow \G_m^{\rig}\times\coprod_{\overline\rho}R_{\overline\rho}$, where the $R_{\overline\rho}$ are Galois deformation rings (more precisely, deformation rings of pseudocharacters), and $\G_m^{\rig}$ corresponds to the eigenvalue of the Hecke operator $U_p$.  The triangulation results of ~\cite{kisin03},  ~\cite{kpx}, and ~\cite{liu} mean that we can combine these two maps to get a morphism
\[	\mathscr{X}_{\GL_2}\rightarrow \coprod_{\overline\rho}X_{{\tri},\overline\rho}^{\psi,\kappa,\rig}	\]
to a moduli space of trianguline Galois representations (here the decorations $\psi$ and $\kappa$ simply mean we are fixing the determinant and the parameters of the triangulation).  The result of ~\cite{emerton2011} then shows that this morphism surjects onto certain components.

More recently, the construction of the eigencurve has been extended to mixed characteristic by Andreatta--Pilloni--Iovita~\cite{aip2018},~\cite{aip2016} and Johansson--Newton~\cite{johansson-newton}, using Huber's theory of adic spaces instead of Tate's theory of rigid analytic spaces.  These authors construct \emph{pseudorigid spaces} containing characteristic $0$ eigenvarieties as open subspaces, with non-empty characteristic $p$ loci.

In previous work, we generalized the construction of $(\varphi,\Gamma)$-module to families of Galois representations with pseudorigid coefficients ~\cite{bellovin2020} and showed that the triangulation of the eigencurve extends to the boundary characteristic $p$ points ~\cite{bellovin2021}.  This yields an analogous morphism $\mathscr{X}_{\GL_2}\rightarrow\coprod X_{{\tri},\overline\rho}^{\psi,\kappa}$ of pseudorigid spaces.  In the present paper, we use that machinery to prove a modularity result for Galois representations trianguline at $p$, characterizing the image in many components.

The proof rests on the Taylor--Wiles patching method, as reformulated in ~\cite{scholze-lubin-tate}.  This is the source of the aforementioned technical hypotheses on $\overline\rho$ (which amount to assumptions about the image of $\overline\rho$ being sufficiently big).  However, there are a number of technical complications.  For example, to carry out some preliminary reductions, we first prove a version of the Jacquet--Langlands correspondence on eigenvarieties extending the construction of ~\cite{birkbeck2019}, and we characterize the image of the cyclic base change morphism $\mathscr{X}_{\GL_2/\Q}\rightarrow \mathscr{X}_{\GL_2/F}$ of ~\cite{johansson-newton17}.  The latter uses the construction of an auxiliary ``$\Gal(F/\Q)$-fixed'' eigenvariety, which may be of independent interest.  This permits us to transfer the problem to overconvergent quaternionic modular forms over a cyclic totally real extension of $\Q$.

The modules of quaternionic automorphic forms we patch are those constructed in ~\cite{johansson-newton}.  We construct trianguline deformation rings which act on them, and we patch by introducing ramification at additional primes.  But the construction of trianguline deformation rings is delicate, because in general triangulations of $(\varphi,\Gamma)$-modules do not interact well with integral structures on the corresponding Galois representation.  Thus, we crucially use the pseudorigid theory of triangulations (and not just the rigid analytic theory) to ensure that we can construct an integral quotient of a Galois deformation ring whose analytic points are trianguline, with Frobenius eigenvalues bounded by a fixed slope.

This leads to a further difficulty, which is that it is difficult to study the components of the trianguline deformation ring directly.  Instead, we patch families of overconvergent automorphic forms, which lets us compare the Galois representation we are interested in with ``nearby'' potentially Barsotti--Tate representations which are known to be automorphic.  Along the way, we construct local pieces of a patched quaternionic eigenvariety $\mathscr{X}_{\underline D^\times}^{\infty}$, together with a morphism to a trianguline variety and a patched module of overconvergent modular forms.  We note that it is only possible to patch families of overconvergent automorphic forms because we constructed an integral model of the trianguline variety; we know almost nothing about its structure away from nice points in the analytic locus, but understanding it better would be very interesting.  We also hope to glue these local patched modules in future work.

We have not attempted to work in maximum generality.  In particular, it should be possible to relax the ramification condition and prove an overconvergent modularity lifting theorem for certain totally real fields.  However, this would require constructing and studying a cyclic base change morphism for more general extensions of number fields.  We expect that it is possible to construct these morphisms for the middle-degree eigenvariety over a totally real field, which would lead to stronger trianguline modularity theorems in characteristic $0$.  But we were forced to assume the degree of the cyclic extension was prime to $p$ to characterize the image of a base change morphism in positive characteristic, so additional work would be required to strengthen our results in positive characteristic.

We further remark that our ``big image'' condition on the residual Galois representation is stronger than the standard one.  This is to ensure that we have access to the necessary cohomological vanishing theorems, to permit us to work with middle-degree eigenvarieties.

The work of Breuil--Hellmann--Schraen ~\cite{breuil-hellmann-schraen} constructs a similar patched eigenvariety for unitary groups, using completed cohomology rather than overconvergent cohomology.  It would be extremely interesting to relate these two constructions.

We now describe the structure of this paper.  We begin by recalling the theory of trianguline $(\varphi,\Gamma)$-modules and their deformations; this permits us to construct and study pseudorigid trianguline varieties (generalizing those of ~\cite{chenevier2013} and ~\cite{breuil-hellmann-schraen}).  We compute the dimension of these pseudorigid trianguline varieties with fixed determinant and weight, and we show that they have an integral model after bounding the slopes of the rank-$1$ constituents.

We then turn to the automorphic theory we will need.  We prove that so-called \emph{twist classical} points are very Zariski dense in the eigenvariety $\mathscr{X}_{\underline D^\times}$, which permits us to interpolate the Jacquet--Langlands correspondence to extended eigenvarieties and permits us to conclude that $\mathscr{X}_{\underline D^\times}$ is reduced (extending the results of ~\cite{birkbeck2019} and ~\cite{chenevier2005}).  We also study the cyclic base change morphism $\mathscr{X}_{\GL_2/\Q}\rightarrow \mathscr{X}_{\GL_2/F}$ of ~\cite{johansson-newton17}; when $F$ is totally real and $[F:\Q]$ is prime to $p$, we show that $x\in \mathscr{X}_{\GL_2/F}$ is in the image if and only if it is fixed by $\Gal(F/\Q)$.  To do this, we study the ``$\Gal(F/\Q)$-fixed locus'' in the Hilbert eigenvariety, and show that classical points are dense in it.

Finally, we turn to the patching argument.  We show that our modules of integral overconvergent automorphic forms are projective at maximal points of weight space, and we show that we can add certain kinds of level structure.  Then using the standard Taylor--Wiles patching construction, we construct a patched module with the support we expect.  This permits us to deduce the desired modularity statement, by deformation from potentially Barsotti--Tate points in characteristic $0$.  This last step requires the results of ~\cite{kisin2009moduli}.

\subsection*{Notation}

We fix some running notation and hypotheses.  In section ~\ref{section: trianguline varieties} we assume that $p\geq 3$, because we only developed the theory of $(\varphi,\Gamma)$-modules over pseudorigid spaces in that situation.  In sections ~\ref{section: extended eigenvarieties} and ~\ref{section: patching}, we assume $p\geq 5$; we need this hypothesis to construct eigenvarieties (and the Jacquet--Langlands and cyclic base change morphisms between them) at tame level $1$, and later to apply Taylor--Wiles patching.

We normalize class field theory so that it sends uniformizers to geometric Frobenius, and we normalize Hodge--Tate weights so that the cyclotomic character has Hodge--Tate weight $-1$.

If $X$ is a group isomorphic to $X_0\times \Z_p^{\oplus r}\times \Z^{\oplus s}$, where $X_0$ is a finite group, we let $\widehat X:=\underline\Hom(X,\G_m^{\ad})$ denote the functor $R\mapsto \Hom_{\mathrm{cts}}(X,R^\times)$.

\subsection*{Acknowledgments}

I would like to thank A. Caraiani, T. Gee, J. Newton, and V. Pilloni for many helpful conversations, as well as useful comments on earlier versions of this paper.  I would also like to thank the anonymous referee for reading this paper extremely carefully and making many helpful comments.

\section{Trianguline varieties and Galois deformation rings}\label{section: trianguline varieties}

\subsection{Galois deformation rings}

Let $\E/\Q_p$ be a finite extension, with ring of integers $\O_E$, uniformizer $\varpi_E$, and residue field $\F$, and let $G$ be a profinite group satisfying Mazur's condition $\Phi_p$.  The two cases we will be most interested in are $G=\Gal_K$ and $G=\Gal_{F,S}$, where $K$ is a finite extension of $\Q_p$, and $F$ is a number field, and $S$ is a set of places of $F$.

Suppose we have a continuous homomorphism $\overline\rho:G\rightarrow \GL_d(\F)$.  Then we may construct the univeral framed deformation ring $R_{\overline\rho}^\square$, which pro-represents the functor
\[	A\rightsquigarrow \{\rho:G\rightarrow \GL_d(A)\mid \rho\equiv \overline\rho\pmod{\mathfrak{m}_A}\}	\]
on the category of complete local noetherian $\O_E$-algebras with residue field $\F$, of lifts of $\overline\rho$, that is, deformations of $\overline\rho$ together with a basis.  If $\End_G(\overline\rho)=\F$ (for example, if $\overline\rho$ is absolutely irreducible), we additionally have the universal (unframed) deformation ring $R_{\overline\rho}$ parametrizing deformations of $\rho$.

If $R$ is a complete local noetherian $\O_E$-algebra with maximal ideal $\mathfrak{m}_R$ and finite residue field, and $\psi:\Gal_K\rightarrow R^\times$ is a continuous character such that $\det\overline\rho=\psi\mod \mathfrak{m}_R$, there is a quotient $R\htimes R_{\overline\rho}^\square\twoheadrightarrow R_{\overline\rho}^{\square,\psi}$ parametrizing lifts of $\overline\rho$ with determinant $\psi$.  Indeed, there is a homomorphism $R_{\det\overline\rho}\rightarrow R_{\overline\rho}^\square$ given by the determinant map, and the choice of $\psi$ defines a homomorphism $R_{\det\overline\rho}\rightarrow R$; then $R_{\overline\rho}^{\square,\psi}=R\htimes_{R_{\det\overline\rho}}R_{\overline\rho}^\square$.  If $\End_G(\overline\rho)=\F$, there is similarly a quotient $R\htimes R_{\overline\rho}\twoheadrightarrow R_{\overline\rho}^\psi$ parametrizing deformations of $\overline\rho$ with determinant $\psi$.

Now we specialize to the arithmetic situations of interest.  Let $K/\Q_p$ be a finite extension, and assume that $\Hom(K,E)$ has cardinality $[K:\Q_p]$.  Then by ~\cite[Corollary 3.37]{boeckle-iyengar-paskunas}, $R_{\overline\rho}^\square$ is a complete intersection, and by ~\cite[Corollary 4.21]{boeckle-iyengar-paskunas} the irreducible components of $\Spec R_{\overline\rho}^\square$ are in bijection with the irreducible components of $\Spec R_{\det\overline\rho}$.  More precisely, if $\mu:=\mu_{p^\infty}(K)$ denotes the $p$-power roots of unity in $K^\times$, local class field theory identifies it with a subgroup of $\Gal_K^{\mathrm{ab}}$; by ~\cite[Lemma 4.1]{boeckle-iyengar-paskunas} $R_{\det\overline\rho}$ is a power series ring over $\O_E[\mu]$, so its irreducible components are in bijection with characters $\chi:\mu\rightarrow \O_E^\times$.  There are quotients $R_{\overline\rho}^\square\twoheadrightarrow R_{\overline\rho}^{\square,\chi}$ parametrizing lifts of $\overline\rho$ whose determinant restricted to $\mu$ (via the Artin map) agrees with $\chi$, and by ~\cite[Corollary 4.5, Corollary 4.19]{boeckle-iyengar-paskunas} the rings $R_{\overline\rho}^{\square,\chi}$ are normal domains and complete intersections.  In particular, $R_{\overline\rho}^{\square}$ is reduced.

Let $F$ be a number field and let $\Sigma_p:=\{v\mid p\}$.  If $\rho:\Gal_F\rightarrow \GL_d(\F)$ is a continuous representation and $v$ is a place of $F$, we let $\rho_v$ denote $\rho|_{\Gal_{F_v}}$.  Suppose that $\overline\rho$ is absolutely irreducible, and let $S$ be a finite set of places of $F$ containing $\Sigma_p$ and the infinite places such that $\overline\rho$ is unramified outside $S$.  Then we let $R_{\overline\rho,S}$ denote the universal deformation ring parametrizing deformations unramified outside of $S$, and we let $R_{\overline\rho,S}^{\square}$ denote the universal deformation ring whose $A$-points are deformations $\rho_A$ of $\overline\rho$ unramified outside of $S$, together with bases for $\rho_A|_{\Gal_{F_v}}$ for each $v\in\Sigma_p$.  We also let $R_{\overline\rho,\loc}^{\square}:=\otimes_{v\in\Sigma_p}R_{\overline\rho_v}^{\square}$.  

If $\psi:\Gal_F\rightarrow R^\times$ is a continuous character as above, we let 
\begin{align*}
	R_{\overline\rho,S}^\psi&:=R\htimes_{R_{\det\overline\rho,S}}R_{\overline\rho,S}	\\
	R_{\overline\rho,S}^{\square,\psi}&:=R\htimes_{R_{\det\overline\rho,S}}R_{\overline\rho,S}^{\square}	\\
	R_{\overline\rho,\loc}^{\square,\psi}&:=R\htimes_{R_{\det\overline\rho,\loc}}R_{\overline\rho,\loc}^{\square}
\end{align*}

For any place $v\in\Sigma_p$, restriction from $\Gal_{F,S}$ to $\Gal_{F_v}$ defines a homomorphism $R_{\overline\rho_v}^\square\rightarrow R_{\overline\rho,S}^\square$, and so we obtain homomorphisms
\[	R_{\overline\rho,\loc}^{\square}\rightarrow R_{\overline\rho,S}^\square	\]
and
\[	R_{\overline\rho,\loc}^{\square,\psi}\rightarrow R_{\overline\rho,S}^{\square,\psi}	\]
We can relate our local and global deformation rings more precisely:
\begin{lemma}\label{lemma: local global def ring top gens}
	Suppose that $p\nmid d$.  Let $h^1$ denote the dimension (as an $\F$-vector space) of
	\[	\ker\left(H^1(\Gal_{F,S},\ad^0(\overline\rho))\rightarrow \prod_{v\in \Sigma_p}H^1(\Gal_{F_v},\ad^0(\overline\rho_v))\right)	\]
	let $\delta_F:=\dim_{\F}H^0(\Gal_{F,S},\ad\overline\rho)$, and for $v\in\Sigma_p$ let $\delta_v:=\dim_{\F}H^0(\Gal_{F_v},\ad\overline\rho_v)$.  Then $R_{\overline\rho,S}^{\square,\psi}$ can be topologically generated over $R_{\overline\rho,\loc}^{\square,\psi}$ by $g:=h^1+\sum_{v\in\Sigma_p}\delta_v-\delta_F$ elements.
	\label{lemma: def ring global local}
\end{lemma}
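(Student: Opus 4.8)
The plan is to realize $R_{\overline\rho,S}^{\square,\psi}$ as a quotient of a power series ring over $R_{\overline\rho,\loc}^{\square,\psi}$ and count the minimal number of variables needed, by comparing tangent spaces and using the obstruction theory for framed deformations. First I would recall that the number of topological generators of $R_{\overline\rho,S}^{\square,\psi}$ as an $R_{\overline\rho,\loc}^{\square,\psi}$-algebra equals $\dim_{\F}$ of the relative mod-$\m$ cotangent space, i.e.\ of the cokernel of the map on (framed, fixed-determinant) tangent spaces
\[
	t_{R_{\overline\rho,\loc}^{\square,\psi}} \longrightarrow t_{R_{\overline\rho,S}^{\square,\psi}},
\]
equivalently of the $\F$-dual of the kernel of the corresponding map on dual tangent spaces. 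So the computation reduces to understanding these tangent spaces in Galois cohomological terms.

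Next I would set up the standard identifications. The tangent space to the unframed fixed-determinant global deformation problem is $H^1(\Gal_{F,S},\ad^0\overline\rho)$ (here $p\nmid d$ guarantees $\ad\overline\rho=\ad^0\overline\rho\oplus\F$, so fixing the determinant is harmless and clean); passing to the framed problem at the places $v\in\Sigma_p$ adds, for each such $v$, a contribution of $\dim_{\F}\GL_d - \dim_{\F}H^0(\Gal_{F_v},\ad\overline\rho_v) = d^2 - \delta_v$ coming from the choice of basis modulo the automorphisms that don't change it, while unframing globally removes $\dim_{\F}\GL_d - \delta_F = d^2-\delta_F$. Similarly the local framed fixed-determinant tangent space $t_{R_{\overline\rho,\loc}^{\square,\psi}}$ is $\bigoplus_{v\in\Sigma_p}\left(H^1(\Gal_{F_v},\ad^0\overline\rho_v) \oplus (\F^{d^2-\delta_v})\right)$, with the framing variables mapping isomorphically onto their global counterparts. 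Therefore the cokernel of the map of relative tangent spaces is exactly the cokernel of
\[
	H^1(\Gal_{F,S},\ad^0\overline\rho) \oplus \F^{\sum_v(d^2-\delta_v)} \;\longrightarrow\; \bigoplus_{v\in\Sigma_p}H^1(\Gal_{F_v},\ad^0\overline\rho_v) \oplus \F^{\sum_v(d^2-\delta_v)} \;\oplus\; \F^{d^2-\delta_F},
\]
where the framing blocks match up and cancel; what survives is the cokernel of the restriction map $H^1(\Gal_{F,S},\ad^0\overline\rho)\to\prod_v H^1(\Gal_{F_v},\ad^0\overline\rho_v)$, together with the extra factor $\F^{d^2-\delta_F}$ absorbed into the unframing.

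Finally, I would convert this cokernel into the stated formula using the Poitou--Tate global Euler characteristic. Writing the restriction map as $H^1_{\mathrm{glob}}\to H^1_{\mathrm{loc}} := \prod_{v\in\Sigma_p}H^1(\Gal_{F_v},\ad^0\overline\rho_v)$ with kernel of dimension $h^1$, we get
\[
	\dim_{\F}\coker = \dim_{\F}H^1_{\mathrm{loc}} - \dim_{\F}H^1_{\mathrm{glob}} + h^1,
\]
and the number of relative generators $g$ is this cokernel dimension plus the $(d^2-\delta_F)$ from unframing, minus the matching framing contributions already accounted for; after bookkeeping with the Euler characteristic formula for $H^*(\Gal_{F,S},\ad^0\overline\rho)$ versus the local terms — which is precisely where the $\delta_F$, the $\delta_v$, and the local/global $H^0$ and $H^2$ terms enter — the expression collapses to $g = h^1 + \sum_{v\in\Sigma_p}\delta_v - \delta_F$. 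I expect the main obstacle to be the careful bookkeeping of the framing variables: one must check that the $d^2-\delta_v$ framing directions at each $v\in\Sigma_p$ genuinely map isomorphically from the local to the global framed tangent space (so they contribute nothing to the relative cokernel), and that the remaining $d^2-\delta_F$ global automorphism directions are exactly the ones killed by unframing, so that no spurious terms of the form $d^2$ are left over. This is the standard but delicate point in all such local-global comparisons, and getting the signs and the $H^0$-corrections right is the crux; the rest is a formal consequence of the tangent-space description of topological generation together with the global Euler characteristic formula.
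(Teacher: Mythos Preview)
Your overall strategy---identifying the minimal number of topological generators with the dimension of the relative cotangent space and then computing this via Galois cohomology plus framing variables---is exactly what underlies the result. The paper's own proof is terse: it observes that since the maximal ideal of $R$ lies in $\mathfrak{m}_{\loc}$ one may assume $\psi$ is constant, and then cites \cite[Lemma 3.2.2]{kisin2009moduli} for the cohomological computation. So your proposal is essentially an attempt to unpack that cited lemma.

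However, there are genuine errors in the execution. First, the ring map $R_{\overline\rho,\loc}^{\square,\psi}\to R_{\overline\rho,S}^{\square,\psi}$ induces a map of tangent spaces in the \emph{other} direction, $t_S^\square\to t_{\loc}^\square$, and the relevant quantity is the \emph{kernel} of this map (the dual of the cokernel of the cotangent map), not a cokernel. Second, your dimension formula for $t_S^\square$ is off: adding framings at all $v\in\Sigma_p$ and modding out by a single global conjugation gives $\dim t_S^\square = \dim H^1(\Gal_{F,S},\ad^0\overline\rho)+|\Sigma_p|\,d^2-\delta_F$; the quantities $d^2-\delta_v$ belong to the \emph{local} framed tangent spaces, not the global one, and the extra $\F^{d^2-\delta_F}$ you placed on the local side has no home there. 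Third, and most importantly, the Poitou--Tate Euler characteristic formula is a red herring. No such input is needed: once one computes $\ker(t_S^\square\to t_{\loc}^\square)$ directly by projecting onto $\ker\big(H^1(\Gal_{F,S},\ad^0\overline\rho)\to\prod_{v\in\Sigma_p} H^1(\Gal_{F_v},\ad^0\overline\rho_v)\big)$ and checking that each fiber has dimension $\sum_{v\in\Sigma_p}\delta_v-\delta_F$ (coming from the ambiguity in choosing $A_v\in\ad\overline\rho$ with $dA_v=c|_v$, modulo the diagonal global action), the formula $g=h^1+\sum_{v\in\Sigma_p}\delta_v-\delta_F$ drops out immediately, with no reference to $H^2$ or global duality.
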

\begin{proof}
	Let $\mathfrak{m}_{\loc}$ denote the maximal ideal of $R_{\overline\rho,\loc}^{\square,\psi}$ and let $\mathfrak{m}_S$ denote the maximal ideal of $R_{\overline\rho,S}^{\square,\psi}$.  We need to compute the relative tangent space $\left(\mathfrak{m}_S/(\mathfrak{m}_S^2,\mathfrak{m}_{\loc})\right)^\ast$ of $R_{\overline\rho,S}^{\square,\psi}/\mathfrak{m}_{\loc}$.  But the maximal ideal of $R$ is contained in $\mathfrak{m}_{\loc}$, so we may assume that $\psi$ is constant, and the result follows from ~\cite[Lemma 3.2.2]{kisin2009moduli}.
\end{proof}

\subsection{Deformations of trianguline $(\varphi,\Gamma)$-modules}

Trianguline $(\varphi,\Gamma)$-modules are those which are extensions of $(\varphi,\Gamma)$-modules of character type.  More precisely,
\begin{definition}
        Let $X$ be a pseudorigid space over $\mathscr{O}_E$ for some finite extension $E/\Q_p$, let $K/\Q_p$ be a finite extension, and let $\underline\delta=(\delta_1,\ldots,\delta_d):(K^\times)^d\rightarrow \Gamma(X,\mathscr{O}_X^\times)$ be a $d$-tuple of continuous characters.  A $(\varphi,\Gamma_{K})$-module $D$ is \emph{trianguline with parameter $\underline\delta$} if (possibly after enlarging $E$) there is an increasing filtration $\Fil^\bullet D$ by $(\varphi,\Gamma_{K})$-modules and a set of line bundles $\mathscr{L}_1,\ldots,\mathscr{L}_d$ such that $\gr^iD\cong \Lambda_{X,\rig,K}(\delta_i)\otimes \mathscr{L}_i$ for all $i$.

        If $X=\Spa R$ where $R$ is a field, we say that $D$ is \emph{strictly trianguline with parameter $\underline\delta$} if for each $i$, $\Fil^{i+1}D$ is the unique sub-$(\varphi,\Gamma_{K})$-module of $D$ containing $\Fil^iD$ such that $\gr^{i+1}D\cong \Lambda_{R,\rig,K}(\delta_{i+1})$.
\end{definition}

As in the characteristic $0$ situation treated in ~\cite[\textsection 2.3]{bellaiche-chenevier}, we may define and study deformations of trianguline $(\varphi,\Gamma)$-modules:
\begin{definition}
        Let $R$ be a finite extension of $\F_p(\!(u)\!)$ and let $D$ be a fixed $(\varphi,\Gamma_{K})$-module of rank $d$ over $\Lambda_{R,\rig,K}$ equipped with a triangulation $\Fil^\bullet D$ with parameter $\underline\delta$.  Let $\mathcal{C}_R$ denote the category of artin local $\Z_p$-algebras $R'$ equipped with an isomorphism $R'/\mathfrak{m}_{R'}\xrightarrow{\sim}R$.  The trianguline deformation functor $\mathrm{Def}_{D,\Fil^\bullet}:\mathcal{C}_R\rightarrow\underline{\mathrm{Set}}$ is defined to be the set of isomorphism classes 
        \[      \mathrm{Def}_{D,\Fil^\bullet}(R'):=\{(D_{R'},\Fil^\bullet D_{R'}, \iota)\}/\sim \]
where $D_{R'}$ is a $(\varphi,\Gamma_{K})$-module over $\Lambda_{R',\rig,K}$, $\Fil^\bullet D_{R'}$ is a triangulation, and $\iota:R\otimes_{R'}D_{R'}\xrightarrow{\sim}D$ is an isomorphism which also defines isomorphisms $R\otimes_{R'}\Fil^iD_{R'}\xrightarrow{\sim}\Fil^iD$.
\end{definition}

One of the consequences of the proof of ~\cite[Proposition 5.1]{bellovin2020} is that when $d=1$, $\mathrm{Def}_{D,\Fil^\bullet}$ is formally smooth.  As in the characteristic $0$ situation, the same is true for general $d$, so long as the parameter satisfies a certain regularity condition.  Note that the regularity condition in here is slightly different than in characteristic $0$; the additional characters avoided in the statement of ~\cite[Proposition 2.3.10]{bellaiche-chenevier} do not make sense in characteristic $p$.
\begin{prop}
	Suppose the parameter $\underline\delta$ of $\Fil^\bullet D$ satisfies the property that $\delta_i\delta_j^{-1}\neq \chi_{\mathrm{cyc}}\circ\Nm_{K/\Q_p}$ for any $i<j$.  Then $\mathrm{Def}_{D,\Fil^\bullet}$ is formally smooth.
\end{prop}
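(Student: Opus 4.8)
The plan is to prove formal smoothness by verifying the infinitesimal lifting criterion: given a small surjection $R'' \twoheadrightarrow R'$ in $\mathcal{C}_R$ with kernel $I$ (so $\mathfrak{m}_{R''}I = 0$), and a trianguline deformation $(D_{R'}, \Fil^\bullet D_{R'}, \iota)$ over $R'$, I must lift it to a trianguline deformation over $R''$. I would proceed by induction on the rank $d$, using the filtration. The base case $d=1$ is exactly the formal smoothness coming out of the proof of ~\cite[Proposition 5.1.1]{bellovin2020}, as recalled just above the statement. For the inductive step, write the triangulation as an extension
\[
    0 \rightarrow \Fil^{d-1}D_{R'} \rightarrow D_{R'} \rightarrow \gr^d D_{R'} \rightarrow 0,
\]
where $\Fil^{d-1}D_{R'}$ carries its induced triangulation with parameter $(\delta_1, \ldots, \delta_{d-1})$ and $\gr^d D_{R'} \cong \Lambda_{R',\rig,K}(\delta_d) \otimes \mathscr{L}_d$ is of character type. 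By the inductive hypothesis applied to $\Fil^{d-1}D$ (whose parameter still satisfies the regularity condition, being a sub-tuple of $\underline\delta$), I may lift the pair $(\Fil^{d-1}D_{R'}, \Fil^\bullet)$ to a trianguline deformation $\widetilde{F}$ over $R''$; by the $d=1$ case I may lift $\gr^d D_{R'}$ to $\Lambda_{R'',\rig,K}(\widetilde\delta_d) \otimes \widetilde{\mathscr{L}}_d$ over $R''$ (lifting the character and the line bundle, the latter being unobstructed as line bundles on pseudorigid spaces deform along square-zero extensions). It then remains to lift the extension class of $D_{R'}$ to an extension of $\widetilde{\gr^d D}$ by $\widetilde{F}$ over $R''$; the obstruction to doing so lies in an $\Ext^2$-type group, and this is where the regularity hypothesis enters.

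The key step is therefore the cohomological computation showing the obstruction vanishes. Extensions of rank-$1$ $(\varphi,\Gamma_K)$-modules of character type are classified by $H^1$ of the $(\varphi,\Gamma_K)$-cohomology of the internal Hom $\gr^d D^\vee \otimes \Fil^{d-1}D$, and the obstruction to deforming the extension along $R'' \twoheadrightarrow R'$ lives in $H^2$ of the corresponding $(\varphi,\Gamma_K)$-cohomology complex (base-changed to $I \otimes_R \gr^d D^\vee \otimes \Fil^{d-1}D$, which over the residue field $R$ is a successive extension of the rank-one modules $\Lambda_{R,\rig,K}(\delta_i\delta_d^{-1})$ for $i < d$). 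By dévissage along the filtration, it suffices to show $H^2$ of the $(\varphi,\Gamma_K)$-cohomology of $\Lambda_{R,\rig,K}(\delta_i\delta_d^{-1})$ vanishes for each $i<d$. The Euler characteristic formula and Tate-type local duality for $(\varphi,\Gamma)$-modules over $\Lambda_{R,\rig,K}$ (as developed for pseudorigid coefficients) identify this $H^2$ with the dual of an $H^0$, namely $\Hom$ in the category of $(\varphi,\Gamma_K)$-modules from $\Lambda_{R,\rig,K}(\delta_i\delta_d^{-1})$ to $\Lambda_{R,\rig,K}(\chi_{\mathrm{cyc}})$, equivalently $H^0$ of $\Lambda_{R,\rig,K}(\chi_{\mathrm{cyc}}\delta_d\delta_i^{-1})$. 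Such a rank-$1$ module has no nonzero global $(\varphi,\Gamma_K)$-invariants unless its parameter is of the form $x \mapsto \prod \sigma(x)^{-k_\sigma}$ with $k_\sigma \ge 0$ (the "exceptional" characters), and the hypothesis $\delta_i\delta_j^{-1} \neq \chi_{\mathrm{cyc}}$ for $i<j$—applied with $j=d$—is precisely designed to rule out the borderline case that would produce such invariants; I would check that together with the nonvanishing of the relevant $\Gamma_K$-action it forces $H^2 = 0$.

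The main obstacle I anticipate is not the structure of the argument but making the cohomological bookkeeping work correctly over pseudorigid (in particular characteristic $p$) coefficient rings $R$: one needs the finiteness of $(\varphi,\Gamma_K)$-cohomology, the Euler–Poincaré characteristic formula, and Tate local duality for $(\varphi,\Gamma_K)$-modules over $\Lambda_{R,\rig,K}$ in this generality, together with a clean computation of $H^0$ and $H^2$ of rank-$1$ objects $\Lambda_{R,\rig,K}(\delta)$ identifying exactly which parameters $\delta$ are exceptional. These facts are the analogues over pseudorigid base of standard results in the characteristic $0$ trianguline deformation theory of ~\cite{bellaiche-chenevier}, and I expect they are available from ~\cite{bellovin2020}; once they are in hand, the regularity condition $\delta_i\delta_j^{-1} \neq \chi_{\mathrm{cyc}}$ kills the obstruction and the induction goes through, yielding formal smoothness. (One should also note that since the functor is unobstructed the same argument shows the tangent space has the expected dimension, though that is not asserted in the statement.)
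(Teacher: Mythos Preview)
Your proposal is correct and follows essentially the same route as the paper: induction on $d$, lift $\Fil^{d-1}D$ by the inductive hypothesis, and identify the obstruction to lifting the extension as an $H_{\varphi,\Gamma}^2$ that vanishes under the regularity hypothesis. The only differences are cosmetic: the paper twists by $\delta_d^{-1}$ to assume $\delta_d$ is trivial (rather than separately lifting the rank-$1$ quotient), and it invokes \cite[Corollary~4.2.3]{bellovin2020} directly for the vanishing of $H_{\varphi,\Gamma}^2(\Fil^{d-1}D(\delta_d^{-1}))$ instead of unwinding Tate duality and the classification of exceptional parameters as you do.
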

\begin{proof}
	The proof is essentially identical to that of ~\cite[Proposition 2.3.10]{bellaiche-chenevier}, but we sketch it here for the convenience of the reader.  We proceed by induction on $d$; the case $d=1$ follows from the proof of ~\cite[Proposition 5.1]{bellovin2020}, so we assume the result for trianguline deformations of $(\varphi,\Gamma)$-modules of rank $d-1$.  Let $I\subset R'$ be a square-zero ideal.  We need to prove that $\mathrm{Def}_{D,\Fil^\bullet}(R')\rightarrow \mathrm{Def}_{D,\Fil^\bullet}(R'/I)$ is surjective, so we may factor $R'\twoheadrightarrow R'/I$ into a series of small extensions and assume that $I$ is principal and $I\mathfrak{m}_{R'}=0$.  By the inductive hypothesis, we may find a trianguline deformation $D'$ of $\Fil^{d-1}D$ over $\Lambda_{R',\rig,L}$.  By twisting, we may assume that $\delta_d$ is trivial.  Then we need to show that the natural map $H_{\varphi,\Gamma}^1(D')\rightarrow H_{\varphi,\Gamma}^1(\Fil^{d-1})$ is surjective.  But the cokernel of this map is $H_{\varphi,\Gamma}^2(I\otimes_{R'/\mathfrak{m}_{R'}}\Fil^{d-1}D(\delta_d^{-1}))=I\otimes_{R'/\mathfrak{m}_{R'}}H_{\varphi,\Gamma}^2(\Fil^{d-1}D(\delta_d^{-1}))$, which is $0$ by assumption and ~\cite[Corollary 4.11]{bellovin2020}.
\end{proof}

In order to build moduli spaces of trianguline $(\varphi,\Gamma)$-modules, we will use moduli spaces of characters, as in ~\cite[\textsection 2.3]{bellovin2021}.  If $G$ is a commutative $p$-adic Lie group and $G'\subset G$ is a compact subgroup such that $G/G'$ is free and finitely generated, then we have $\widehat{G'}:=\Spa \Z_p[\![G']\!]$ and the pseudorigid spaces $\widehat{G'}^{\an}$ and $\widehat{G}^{\an}:=\Spa(\Z[G/G'],\Z)\times_{\Z}\widehat{G'}^{\an}$.  If $X$ is a pseudorigid space, we also have the pseudorigid space $\widehat G_X$, which represents the functor 
\[	Y\rightsquigarrow \Hom_{\cts}(G,\O(Y))	\]
on the category of adic spaces over $X$.

In particular, if $K$ is a finite extension of $\Q_p$, we will be interested in $\widehat{K^\times}^{\an}$ and $\widehat{(K^\times)^d}^{\an}$ for $d\geq 1$:
\begin{definition}
	We let $\mathcal{T}:=\widehat{K^\times}^{\an}$, and for any $d\geq 1$, we write $\mathcal{T}^d:=\widehat{(K^\times)^d}^{\an}$.
        \label{def:G hat}
\end{definition}
We see that $\widehat{K^\times}^{\an}\cong \G_m^{\ad}\times_{\Z}\Spa \Z_p[\![\O_K^\times]\!]^{\an}$, and $\mathcal{T}^d\cong \G_m^{\ad,d}\times_{\Z}\Spa \Z_p[\![(\O_K^\times)^d]\!]^{\an}$.  Since $\O_K^\times$ is compact, $\Spa \Z_p[\![\O_K^\times]\!]^{\an}$ is a quasi-compact pseudorigid space; it has a finite cover $\{U_i:=\Spa R_i\}$ by affinoid subspaces, and $\G_{m,U_i}$ is a rising union of relative annuli $C_{U_i,h}:=\Spa R_i\left\langle u^hT, u^hT^{-1}\right\rangle$.

If $K=\Q_p$, then $\widehat{\Q_p^\times}^{\an}$ has connected components indexed by the elements of $\mu_{p-1}$, each of which is isomorphic to $\left(\Spa \Z_p[\![\Z_p]\!]\right)^{\an}\times\G_m^{\ad}$.

\begin{remark}
	In the pseudorigid setting (unlike the classical rigid analytic setting), it is not true that $\widehat{G_1\times G_2}^{\an}\cong \widehat{G_1}^{\an}\times \widehat{G_2}^{\an}$.  Indeed, $\Spa \Z_p[\![T_1,T_2]\!]^{\an}$ consists of all valuations which do not vanish on all three of $p, T_1, T_2$.  But 
	\[	\Spa \Z_p[\![T_1]\!]^{\an}\times_{\Z_p}\Spa \Z_p[\![T_2]\!]^{\an}	\]
	also excludes valuations vanishing at both $p$ and $T_1$ (or both $p$ and $T_2$).  In particular, $\mathcal{T}^d$ is \emph{not} a product of copies of $\mathcal{T}$.
        \label{rmk: products char varieties}
\end{remark}

\begin{definition}
        We say that a continuous character $\kappa:K^\times\rightarrow \O(X)^\times$ is \emph{regular} if for all maximal points $x\in X$, the residual character $\kappa_x:K^\times\rightarrow k(x)^\times$ is not of the form 
	\begin{itemize}
		\item	$\alpha\mapsto \alpha^{-\mathbf{i}}$ or $\alpha\mapsto \alpha^{\mathbf{i}+\mathbf{1}}\lvert \alpha\rvert$ for $\mathbf{i}\in \Z_{\geq 0}^{\Hom(K,k(x))}$ (if $x$ is a characteristic $0$ point), or 
		\item	trivial or $\chi_{\mathrm{cyc}}\circ\Nm_{K/\Q_p}$ (if $x$ is a characteristic $p$ point).
	\end{itemize}

	The space of \emph{regular parameters} $\mathcal{T}_{\mathrm{reg}}^d\subset\mathcal{T}^d$ is the Zariski-open subspace whose $X$-points are given by parameters $\underline\delta:(K^\times)^d\rightarrow \O(X)^\times$ such that $\delta_i\delta_j^{-1}:K^\times\rightarrow \O(X)^\times$ is regular for all $j>i$.
        \label{def: regular parameters}
\end{definition}

Consider the functor $\mathcal{S}_d^\square$ on pseudorigid spaces defined via
\[      X\rightsquigarrow \{(D,\Fil^\bullet D,\underline\delta,\underline\nu)\}/\sim    \]
where $D$ is a trianguline $(\varphi,\Gamma_K)$-module with filtration $\Fil^\bullet D$ and regular parameter $\underline\delta\in\mathcal{T}_{\mathrm{reg}}^d$, and $\underline\nu$ is a sequence of trivializations $\nu_i:\gr^iD\xrightarrow{\sim}\Lambda_{X,\rig,K}$.  There is a natural transformation $\mathcal{S}_d^\square\rightarrow\mathcal{T}_{\mathrm{reg}}^d$ given on $X$-points by
\[      (D,\Fil^\bullet D,\underline\delta,\underline\nu)\rightsquigarrow \underline\delta      \]
Exactly as in ~\cite[Th\'eor\`eme 3.3]{chenevier2013} and ~\cite[Theorem 2.4]{hellmann-schraen2016}, we have the following:
\begin{prop}\label{prop: dim S_d}
        The functor $\mathcal{S}_d^\square$ is representable by a pseudorigid space, which we also denote $\mathcal{S}_d^\square$, and the morphism $\mathcal{S}_d^\square\rightarrow \mathcal{T}_{\mathrm{reg}}^d$ is smooth of relative dimension $\frac{d(d-1)}{2}[K:\Q_p]$.
\end{prop}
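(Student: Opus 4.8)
The plan is to construct $\mathcal{S}_d^\square$ inductively over $d$, fibering out the top graded piece. Concretely, I would set up a forgetful morphism $\mathcal{S}_d^\square \to \mathcal{S}_{d-1}^\square \times_{\mathcal{T}_{\mathrm{reg}}^{d-1}} \mathcal{T}_{\mathrm{reg}}^d$ sending $(D,\Fil^\bullet D,\underline\delta,\underline\nu)$ to the pair consisting of $(\Fil^{d-1}D, \Fil^\bullet\Fil^{d-1}D, (\delta_1,\dots,\delta_{d-1}),(\nu_1,\dots,\nu_{d-1}))$ together with the full parameter $\underline\delta$. A point of $\mathcal{S}_d^\square$ over such data is then an extension class in $H^1_{\varphi,\Gamma}$ of $\Lambda_{X,\rig,K}(\delta_d)$ by $\Fil^{d-1}D$, i.e.\ a class in $H^1_{\varphi,\Gamma}(X, \Fil^{d-1}D(\delta_d^{-1}))$, together with the trivialization $\nu_d$ of the top quotient. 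So the heart of the matter is to show that, after restricting the base to the regular parameter locus, the cohomology $H^1_{\varphi,\Gamma}$ of the relevant rank-$(d-1)$ $(\varphi,\Gamma)$-module forms a \emph{locally free sheaf} on $\mathcal{S}_{d-1}^\square \times_{\mathcal{T}_{\mathrm{reg}}^{d-1}} \mathcal{T}_{\mathrm{reg}}^d$ of the expected rank $(d-1)[K:\Q_p]$, and that $\mathcal{S}_d^\square$ is the associated (geometric) vector bundle, which is then visibly smooth of the claimed relative dimension over $\mathcal{S}_{d-1}^\square\times\mathcal{T}_{\mathrm{reg}}^d$; combining with the inductive relative dimension $\frac{(d-1)(d-2)}{2}[K:\Q_p]$ of $\mathcal{S}_{d-1}^\square\to\mathcal{T}_{\mathrm{reg}}^{d-1}$ and the dimension count $(d-1)[K:\Q_p] + \frac{(d-1)(d-2)}{2}[K:\Q_p] = \frac{d(d-1)}{2}[K:\Q_p]$ gives the result.

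The key inputs are the finiteness and base-change properties of $(\varphi,\Gamma)$-cohomology over pseudorigid spaces, which I would take from the cited results of \cite{bellovin2020} (in particular the Euler characteristic formula and the vanishing of $H^2_{\varphi,\Gamma}$ in the relevant cases, as used in the proof of formal smoothness of $\mathrm{Def}_{D,\Fil^\bullet}$ above, together with \cite[Corollary 4.2.3]{bellovin2020}). The regularity condition $\delta_i\delta_j^{-1}\neq\chi_{\mathrm{cyc}}$ for $i<j$ is exactly what forces $H^0_{\varphi,\Gamma}(\Lambda(\delta_i\delta_j^{-1}))=0$ and $H^2_{\varphi,\Gamma}(\Lambda(\delta_i\delta_j^{-1}))=0$ (by Tate local duality for $(\varphi,\Gamma)$-modules, the $H^2$ vanishing is dual to an $H^0$ vanishing involving $\chi_{\mathrm{cyc}}$); this pins down the dimension of $H^1_{\varphi,\Gamma}(\Lambda(\delta_i\delta_j^{-1}))$ to be $[K:\Q_p]$ fiberwise, uniformly, and hence by the base-change theorem the cohomology sheaf is locally free. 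One then builds up $H^1_{\varphi,\Gamma}(\Fil^{d-1}D(\delta_d^{-1}))$ using the filtration: the graded pieces are $\Lambda(\delta_i\delta_d^{-1})$ for $i<d$, each contributing a locally free rank-$[K:\Q_p]$ piece with vanishing $H^0$ and $H^2$, so the long exact sequences degenerate and the total $H^1$ is locally free of rank $(d-1)[K:\Q_p]$.

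For representability itself, I would argue as in \cite[Th\'eor\`eme 3.3]{chenevier2013}: the choice of trivializations $\underline\nu$ rigidifies the moduli problem (killing automorphisms), so it suffices to represent the universal extension. Given the locally free cohomology sheaf $\mathcal{E} := R^1\pi_* (\Fil^{d-1}D^{\mathrm{univ}}(\delta_d^{-1}))$ on $Y := \mathcal{S}_{d-1}^\square\times_{\mathcal{T}^{d-1}_{\mathrm{reg}}}\mathcal{T}^d_{\mathrm{reg}}$, the total space $\mathbb{V}(\mathcal{E}^\vee)$ (the geometric vector bundle whose sections are sections of $\mathcal{E}$) carries a tautological extension class, and this represents $\mathcal{S}_d^\square$. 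Smoothness of $\mathcal{S}_d^\square\to\mathcal{T}^d_{\mathrm{reg}}$ then follows by composing: $\mathbb{V}(\mathcal{E}^\vee)\to Y$ is smooth of relative dimension $(d-1)[K:\Q_p]$, $Y\to\mathcal{T}^d_{\mathrm{reg}}$ is smooth of relative dimension $\frac{(d-1)(d-2)}{2}[K:\Q_p]$ by induction and flat base change, and smoothness is preserved under composition.

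The main obstacle I anticipate is the base-change statement for $(\varphi,\Gamma)$-cohomology over pseudorigid — as opposed to rigid analytic — bases: one needs that formation of $H^i_{\varphi,\Gamma}$ commutes with arbitrary base change once the fibral dimensions are locally constant, and that the resulting sheaf is coherent and in fact locally free. In the rigid setting this is \cite{kpx}, but here one must appeal to the pseudorigid generalization in \cite{bellovin2020}; verifying that the cited finiteness and Euler-characteristic results there are strong enough to yield genuine local freeness of $R^1\pi_*$ (not merely fiberwise dimension bounds), and that they behave well over the characteristic $p$ locus, is the delicate point. Everything else — the inductive bookkeeping, the dimension count, and the rigidification by trivializations — is formal and parallels \cite{chenevier2013} and \cite{hellmann-schraen2016} closely.
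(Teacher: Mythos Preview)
Your proposal is correct and follows essentially the same approach as the paper: both proceed by induction on $d$, constructing $\mathcal{S}_d^\square$ as the geometric vector bundle associated to the locally free sheaf $H^1_{\varphi,\Gamma_K}(\Fil^{d-1}D^{\mathrm{univ}}(\delta_d^{-1}))$ over $\mathcal{S}_{d-1}^\square\times_{\mathcal{T}_{\mathrm{reg}}^{d-1}}\mathcal{T}_{\mathrm{reg}}^d$, with local freeness of rank $(d-1)[K:\Q_p]$ coming from the regularity hypothesis (which forces $H^0_{\varphi,\Gamma_K}=H^2_{\varphi,\Gamma_K}=0$) and the Euler characteristic formula. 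Your discussion of the base-change subtlety over pseudorigid spaces is a fair concern, but the paper handles it by direct appeal to the results of \cite{bellovin2020}, exactly as you anticipate.
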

One proves by induction on $d$ that if $D$ is a trianguline $(\varphi,\Gamma_K)$-module over $X$ with parameter $\underline\delta\in (\mathcal{T}_{\mathrm{reg}})^d$, then $H_{\varphi,\Gamma_K}^1(D)$ is a vector bundle over $X$ of rank $d[K:\Q_p]$ (the regularity assumption ensures that $H_{\varphi,\Gamma_K}^0(D)=H_{\varphi,\Gamma_K}^2(D)=0$).  Now $\mathcal{S}_1^\square=\mathcal{T}=\mathcal{T}_{\mathrm{reg}}^1$, so $\mathcal{S}_1^\square$ is representable and is smooth of the correct dimension over $\mathcal{T}_{\mathrm{reg}}^1$.  Then one may proceed by induction on $d$ again, and construct $\mathcal{S}_d^\square$ as the moduli space of extensions of the universal $(\varphi,\Gamma_K)$-module of character type $\Lambda_{\mathcal{T},\rig,K}(\delta_{\mathrm{univ}})$ by the universal object $D_{d-1,\mathrm{univ}}$ over $\mathcal{S}_{d-1}^\square$.  For a specified regular parameter $\underline\delta=(\delta_1,\ldots,\delta_d)\in\mathcal{T}_{\mathrm{reg}}^d(X)$, the fiber $\mathcal{S}_d^\square|_{\underline\delta}$ is equal to $\Ext^1(\Lambda_{X,\rig,K}(\delta_d),D_{d-1,\mathrm{univ}}|_{(\delta_1,\ldots,\delta_{d-1})})=H_{\varphi,\Gamma_K}^1(D_{d-1,\mathrm{univ}}|_{(\delta_1,\ldots,\delta_{d-1})}(\delta_d^{-1}))$.  This is a rank-$(d-1)$ vector bundle over $X$, and the claim follows.

We also introduce variants of $\mathcal{S}_d^\square$ with families of fixed determinant and weights.  More precisely, suppose $X$ is a pseudorigid space and we have a continuous character $\delta_{\det}:K^\times\rightarrow\O(X)^\times$ and a $d$-tuple of continuous characters $\underline\kappa:=(\kappa_1,\ldots,\kappa_d):\O_K^\times\rightarrow \O(X)^\times$.  We say that $\delta_{\det}$ and $\underline\kappa$ are \emph{compatible} if $\delta_{\det}|_{\O_K^\times}=\kappa_1\cdots\kappa_d$.  If $\delta_{\det}$ and $\underline\kappa$ are compatible, we consider the functors $\mathcal{S}_d^{\square,\delta_{\det}}$ and $\mathcal{S}_d^{\square,\delta_{\det},\underline\kappa}$ on pseudorigid spaces over $X$ defined via
	\[	Y\rightsquigarrow \{(D,\Fil^\bullet D,\underline\delta)\in \mathcal{S}_d^\square(Y) \mid \delta_1\cdots\delta_d=\delta_{\det}\}/\sim	\]
	and
	\[	Y\rightsquigarrow \{(D,\Fil^\bullet D,\underline\delta,\underline\nu)\in \mathcal{S}_d^\square(Y) \mid \delta_i|_{\O_K^\times}=\kappa_i\text{ for all }i, \delta_1\cdots\delta_d=\delta_{\det}\}/\sim	\]
	\begin{prop}\label{prop: dim S_d det wts}
		The functor $\mathcal{S}_d^{\square,\delta_{\det},\underline\kappa}$ is representable by a pseudorigid space over $X$, which we also denote $\mathcal{S}_d^{\square,\delta_{\det},\underline\kappa}$, and the morphism $\mathcal{S}_d^{\square,\delta_{\det},\underline\kappa}\rightarrow X$ is smooth and surjective of relative dimension $\frac{d(d-1)}{2}[K:\Q_p]+d-1$.
	\end{prop}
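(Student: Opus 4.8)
The plan is to exhibit $\mathcal{S}_d^{\square,\delta_{\det},\underline\kappa}$ as a base change of the morphism $\mathcal{S}_d^\square\to\mathcal{T}_{\mathrm{reg}}^d$ of Proposition~\ref{prop: dim S_d}. Since the conditions $\delta_i|_{\O_K^\times}=\kappa_i$ and $\delta_1\cdots\delta_d=\delta_{\det}$ depend only on the parameter $\underline\delta$ attached to an object of $\mathcal{S}_d^\square$, I would first observe that, as functors on pseudorigid spaces over $X$,
\[	\mathcal{S}_d^{\square,\delta_{\det},\underline\kappa}=\mathcal{S}_d^\square\times_{\mathcal{T}_{\mathrm{reg}}^d}\mathcal{T}_{\mathrm{reg}}^{d,\delta_{\det},\underline\kappa},	\]
where $\mathcal{T}_{\mathrm{reg}}^{d,\delta_{\det},\underline\kappa}$ sends an $X$-space $Y$ to the set of regular parameters $\underline\delta\in\mathcal{T}_{\mathrm{reg}}^d(Y)$ with $\delta_i|_{\O_K^\times}=\kappa_i$ for all $i$ and $\delta_1\cdots\delta_d=\delta_{\det}$; the compatibility hypothesis $\delta_{\det}|_{\O_K^\times}=\kappa_1\cdots\kappa_d$ is exactly what makes these conditions consistent. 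Granting that $\mathcal{T}_{\mathrm{reg}}^{d,\delta_{\det},\underline\kappa}$ is representable by a pseudorigid space smooth over $X$ of relative dimension $d-1$, the proposition follows formally: fiber products of pseudorigid spaces exist, so $\mathcal{S}_d^{\square,\delta_{\det},\underline\kappa}$ is representable; smoothness and relative dimension are stable under base change, so by Proposition~\ref{prop: dim S_d} the structure morphism $\mathcal{S}_d^{\square,\delta_{\det},\underline\kappa}\to\mathcal{T}_{\mathrm{reg}}^{d,\delta_{\det},\underline\kappa}$ is smooth of relative dimension $\frac{d(d-1)}{2}[K:\Q_p]$; and composing with the projection to $X$ gives a morphism which is smooth of relative dimension $\frac{d(d-1)}{2}[K:\Q_p]+(d-1)$.

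To handle $\mathcal{T}_{\mathrm{reg}}^{d,\delta_{\det},\underline\kappa}$, I would fix a uniformizer $\varpi$ of $K$, giving $K^\times\cong\O_K^\times\times\varpi^{\Z}$ and hence $\widehat{K^\times}\cong\widehat{\O_K^\times}\times\Gm$, a continuous character being recorded by its restriction to $\O_K^\times$ together with its value at $\varpi$. Under this identification the subspace of $X\times(\widehat{K^\times})^d$ where $\delta_i|_{\O_K^\times}=\kappa_i$ for all $i$ is $X\times\Gm^d$, the $i$-th factor recording $\delta_i(\varpi)$; imposing in addition $\delta_1\cdots\delta_d=\delta_{\det}$ then imposes no further condition on $\O_K^\times$ (by compatibility) and exactly the single equation $x_1\cdots x_d=\delta_{\det}(\varpi)$ at $\varpi$. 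Since $\delta_{\det}(\varpi)\in\O(X)^\times$ defines a section of $\Gm$ over $X$, this equation cuts out a torsor over $X$ under the subtorus $\{x_1\cdots x_d=1\}\cong\Gm^{d-1}$, split by $(x_1,\ldots,x_d)=(\delta_{\det}(\varpi),1,\ldots,1)$; in particular it is smooth over $X$ of relative dimension $d-1$. Finally, by Definition~\ref{def: regular parameters} the regularity condition is the preimage of the Zariski-open subspace $\mathcal{T}_{\mathrm{reg}}^d\subset\mathcal{T}^d$ under the forgetful map to $\mathcal{T}^d$, hence an open immersion, so $\mathcal{T}_{\mathrm{reg}}^{d,\delta_{\det},\underline\kappa}$ is an open subspace of a pseudorigid space smooth over $X$ of relative dimension $d-1$ and is therefore itself such a space.

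The substantive geometric content --- that $\mathcal{S}_d^\square\to\mathcal{T}_{\mathrm{reg}}^d$ is smooth of relative dimension $\frac{d(d-1)}{2}[K:\Q_p]$, via the inductive $\Ext^1$/cohomology computation recalled after Proposition~\ref{prop: dim S_d} --- is already available, so no new facts about $(\varphi,\Gamma)$-modules are needed here. I expect the only genuinely delicate part to be the pseudorigid bookkeeping: checking that the fiber products above exist in the category of pseudorigid spaces, that the torsor description and its splitting behave exactly as in rigid geometry (including at characteristic $p$ points of $X$), and that ``smooth of relative dimension $n$'' is additive along compositions and stable under base change in this setting. None of this should be problematic, but it is where the argument has to be written carefully.
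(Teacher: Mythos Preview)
Your proposal is correct and follows essentially the same strategy as the paper: both realize $\mathcal{S}_d^{\square,\delta_{\det},\underline\kappa}$ as a base change of $\mathcal{S}_d^\square$ along the constraints imposed by $\underline\kappa$ and $\delta_{\det}$, and both reduce to Proposition~\ref{prop: dim S_d} together with the observation that those constraints cut out something smooth of relative dimension $d-1$ over $X$. The only cosmetic difference is that the paper packages this as a single pullback square over $\widehat{\O_K^\times}^d\times\Gm^{\an}$ (via the map $\underline\delta\mapsto(\delta_i|_{\O_K^\times},\prod_i\delta_i(\varpi_K))$, asserted to be smooth of relative dimension $d-1$), whereas you insert the intermediate space $\mathcal{T}_{\mathrm{reg}}^{d,\delta_{\det},\underline\kappa}$ and spell out the $\Gm^{d-1}$-torsor description explicitly; your version is more detailed but the content is the same.
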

	\begin{proof}
		Set $Y:=\widehat{(\O_K^\times)^d}^{\an}$.  Then there is a morphism $\mathcal{T}^d\rightarrow \G_{m,Y}$, given by $\underline\delta\mapsto \left(\delta_1|_{\O_K^\times},\ldots,\delta_d|_{\O_K^\times},\delta_1(\varpi_K)\cdots\delta_d(\varpi_K)\right)$, and it is smooth of relative dimension $d-1$.  The choice of $\delta_{\det}$ and $\underline\kappa$ define a morphism $X\rightarrow \G_{m,Y}$, and we have a pullback square
		\[
			\begin{tikzcd}
				\mathcal{S}_d^{\square,\delta_{\det},\underline\kappa} \ar[r] \ar[d] & \mathcal{S}_d^\square \ar[d] \\
				X \ar[r] & \G_{m,Y}
			\end{tikzcd}
		\]
		Then the result follows from Proposition~\ref{prop: dim S_d}.
	\end{proof}

	\begin{example}\label{example: tri var over wt space}
		In the example of most interest to us, we will take $K=\Q_p$, $d=2$, and $R=\Z_p[\![T_0]\!]$, where $T_0:=\T(\Z_p)$ for a split maximal torus $\T\subset \GL_2/\Z_p$.  Fix an unramified character $\psi_0:\Gal_{\Q_p}\rightarrow R^\times$.  There is a universal pair of characters $\lambda_1,\lambda_2:\Z_p^\times\rightrightarrows R^\times$, and we set $\psi:=\left( \lambda_1\lambda_2 \chi_{\cyc}\right)^{-1}\psi_0$ and $\underline\kappa:(\lambda_2^{-1},\left(\lambda_1\chi_{\cyc}\right)^{-1})$.  Then the morphism $\mathcal{S}_2^{\square}\rightarrow \Spa R^{\an}$ is the natural projection $\mathcal{S}_d^{\square}\rightarrow \widehat{(\Z_p^\times)^2}$, composed with taking inverses and swapping factors.  Furthermore, $\mathcal{T}$ is $2$-dimensional and irreducible (corresponding to a choice of $\delta_1$); fixing the determinant means the remaining degrees of freedom are the $1$-dimensional irreducible space $\widehat{\Z_p^\times}$ (corresponding to the choice of $\delta_2|_{\Z_p^\times}$), and the generically $1$-dimensional space of extensions between them.  We see that in this case, $\mathcal{S}_2^{\square,\delta_{\psi},\underline\kappa}$ is $4$-dimensional, and an $\A^1$-torsor over a dense open subspace of $\G_m^{\ad}\times\widehat{(\Z_p^\times)^2}$.  Hence it is irreducible.
	\end{example}

	\subsection{Structure of trianguline varieties}

	Let $K/\Q_p$ be a finite extension, and let $\overline\rho:\Gal_K\rightarrow \GL_d(k)$ be a continuous representation, where $k$ is a finite field containing the residue field of $K$.  The fiber product $(\Spa R_{\overline\rho}^{\square})^{\an}\times_{\Spa \Z_p}\mathcal{T}^d$ exists as a pseudorigid space, and it is contained in the fiber product
	\[	\G_m^{\ad,d}\times_\Z \widehat{(\O_K^\times)^d} \times \Spa(R_{\overline\rho}^{\square})^{\an}	\]
	(with complement of codimension $\geq 2$ if $d\geq 2$).  Let $X_{{\tri},\overline\rho}^{\square}$ be the Zariski closure in the latter of the set of maximal points $x=\{(\rho_x,\underline\delta_x)\}$, where $\rho_x$ is a (framed) lift of $\overline\rho$ and $\underline\delta_x\in \mathcal{T}_{\mathrm{reg}}^d(L)$ is a regular parameter of $D_{\rig}(\rho_x)$.

	Let $R$ be a complete local noetherian $\Z_p$-algebra with finite residue field.  Fix an $d$-tuple of characters $\underline\kappa:=(\kappa_1,\ldots,\kappa_d)$, where $\kappa_i:\O_K^\times\rightarrow \O(X)^\times$ and $X:=\left(\Spa R\right)^{\an}$, and fix a character $\psi:\Gal_K\rightarrow R^\times$.   Over the pseudorigid space $X$, a character $\psi:\Gal_K\rightarrow \O(X)^\times$ corresponds to a rank-$1$ $(\varphi,\Gamma)$-module of the form $D_{\rig}(\delta_{\psi})$, for some character $\delta_{\psi}:K^\times\rightarrow \O(X)^\times$.  If $\delta_{\psi}$ and $\underline\kappa$ are compatible, we may define 
	\[	X_{ {\tri},\overline\rho}^{\square,\psi,\underline\kappa}\subset \G_m^{\ad,d}\times_{\Z}\widehat{(\O_K^\times)^d}\times (\Spa R_{\overline\rho}^{\square,\psi})^{\an}	\]
	to be the Zariski closure of the set of maximal points $x=\{(\rho_x,\underline\delta_x)\}$, where $\rho_x$ is a framed lift of $\overline\rho$ with determinant $\psi$ and $\underline\delta_x\in \mathcal{T}_{\mathrm{reg}}^d(L)$ is a regular parameter of $D_{\rig}(\rho_x)$ such that $\delta_i|_{\O_K^\times}=\kappa_i$.

	In order to study the structure of $X_{ {\tri},\overline\rho}^{\square}$ and $X_{ {\tri},\overline\rho}^{\square,\psi,\underline\kappa}$, we will need to know something about the essential image of the functor from Galois representations to $(\varphi,\Gamma)$-modules.  We refer the reader to ~\cite{bellovin2020} for details on definitions of pseudorigid overconvergent period rings and the construction of $(\varphi,\Gamma)$-modules in the pseudorigid setting.  However, we note here that $\Lambda_{R,[0,b],K}$ is the coordinate ring of a closed annulus over $\Spa R$, $\Lambda_{R,(0,b],K}$ is the ring of global functions on a half-open annulus over $\Spa R$, and $\Lambda_{R,{\rig},K}:=\varprojlim_{b\rightarrow 0}\Lambda_{R,(0,b],K}$.  As in the work of ~\cite{cc} and ~\cite{berger-colmez}, $(\varphi,\Gamma)$-modules attached to Galois representations are constructed over $\Lambda_{R,[0,b],K}$ for some $b>0$ (which depends in subtle ways on the representation).
\begin{lemma}
	The functor $M\rightsquigarrow D_{\rig,K}(M)$ from $\Gal_K$-representations to their associated $(\varphi,\Gamma)$-modules is formally smooth.
	\label{lemma: phi-gamma formally smooth}
\end{lemma}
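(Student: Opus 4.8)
The plan is to deduce formal smoothness from the fact that $D_{\rig,K}$ preserves all of the cohomological invariants that govern deformation theory, by comparing the deformation theory of a $\Gal_K$-representation $M$ with that of $D_{\rig,K}(M)$. Concretely, let $A'\twoheadrightarrow A$ be a square-zero extension of artinian local rings in the relevant category (which, as in the proof of Proposition~\ref{prop: dim S_d det wts}, we may assume small, so its kernel $I$ satisfies $\mathfrak{m}_{A'}I=0$); let $M$ be a $\Gal_K$-representation on a finite free $A$-module, and let $D'$ be a $(\varphi,\Gamma_K)$-module over $\Lambda_{A',\rig,K}$ equipped with an isomorphism $D'\otimes_{A'}A\xrightarrow{\sim}D_{\rig,K}(M)$. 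The goal is to produce a lift $M'$ of $M$ over $A'$ with $D_{\rig,K}(M')\cong D'$ compatibly with the given identifications. First I would recall that deformations of $M$ along $A'\to A$ are controlled by the Galois cohomology groups $H^i(\Gal_K,\ad M\otimes_A I)$ — the obstruction to existence lying in $H^2$, and the deformations forming a torsor under $H^1$ when this obstruction vanishes — while, by the cohomology and deformation formalism for $(\varphi,\Gamma_K)$-modules over pseudorigid Robba rings developed in \cite{bellovin2020} (compare \cite[\S 2.3]{bellaiche-chenevier}), deformations of $D_{\rig,K}(M)$ along $A'\to A$ are controlled in the same way by $H_{\varphi,\Gamma_K}^i(\ad D_{\rig,K}(M)\otimes_A I)$. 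Since $D_{\rig,K}$ is an exact $\otimes$-functor we have $\ad D_{\rig,K}(M)=D_{\rig,K}(\ad M)$, and the crucial input is the natural isomorphism $H^i(\Gal_K,-)\xrightarrow{\sim}H_{\varphi,\Gamma_K}^i(D_{\rig,K}(-))$ for $i\le 2$, again from \cite{bellovin2020} (the case $i=2$ being essentially \cite[Corollary 4.2.3]{bellovin2020}).

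Granting these inputs the argument is formal. The obstruction to lifting $M$ to $A'$ lives in $H^2(\Gal_K,\ad M\otimes_A I)$, and by naturality of obstruction classes it is carried by the comparison isomorphism to the obstruction to lifting $D_{\rig,K}(M)$, which lies in $H_{\varphi,\Gamma_K}^2(\ad D_{\rig,K}(M)\otimes_A I)$ and vanishes since $D'$ exists; hence $M$ admits some lift $M_0'$ over $A'$. Then $D_{\rig,K}(M_0')$ and $D'$ are two lifts of $D_{\rig,K}(M)$ over $A'$, so they differ by a class $c\in H_{\varphi,\Gamma_K}^1(\ad D_{\rig,K}(M)\otimes_A I)$; transporting $c$ through the comparison isomorphism to a class $c_0\in H^1(\Gal_K,\ad M\otimes_A I)$ and modifying $M_0'$ by $c_0$ under the torsor action yields a lift $M'$ with $D_{\rig,K}(M')\cong D'$. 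This last step uses that $D_{\rig,K}$ matches the $H^1$-torsor structure on Galois deformations with the $H_{\varphi,\Gamma_K}^1$-torsor structure on $(\varphi,\Gamma_K)$-module deformations, which follows from the naturality of the comparison isomorphism together with the compatible cocycle descriptions of the two deformation problems. This is precisely the lifting property required for formal smoothness of the functor $M\rightsquigarrow D_{\rig,K}(M)$.

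A slightly more conceptual route, which I would keep in reserve: by the overconvergence of étale $(\varphi,\Gamma_K)$-modules in families and Kedlaya's slope theory over pseudorigid spaces, both from \cite{bellovin2020}, $D_{\rig,K}$ identifies $\Gal_K$-representations on finite free $A'$-modules with the full subcategory of étale $(\varphi,\Gamma_K)$-modules over $\Lambda_{A',\rig,K}$; since the Harder--Narasimhan slope polygon is locally constant and $\Spec A'$ consists of a single point, the deformation $D'$ of the étale module $D_{\rig,K}(M)$ is automatically étale, hence of the form $D_{\rig,K}(M')$, and $M'$ lifts $M$ by full faithfulness over $A$. I expect that the real work is not in either formal argument but in confirming that the background results invoked — the deformation-theoretic cohomology of $(\varphi,\Gamma_K)$-modules, the Galois-to-$(\varphi,\Gamma_K)$ cohomology comparison, overconvergence, and slope theory — hold in the pseudorigid setting with coefficient rings of possibly positive characteristic, rather than only in the classical rigid-analytic situation over $\Q_p$; supplying exactly these ingredients is the purpose of \cite{bellovin2020}, so the role of this lemma is to package that machinery into the clean statement used in the sequel.
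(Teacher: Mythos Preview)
Your argument is sound in outline, but you have set it up over artinian local rings, whereas the lemma---and its only application, to show $V^\square\to\mathcal{S}_d^\square$ is smooth---concerns square-zero extensions of arbitrary pseudoaffinoid algebras $R$; the reduction to small extensions is unavailable in that generality. This is not fatal: the obstruction/torsor formalism works for any square-zero thickening once $I$ is viewed as an $R/I$-module, and the Galois/$(\varphi,\Gamma)$ cohomology comparison of \cite{bellovin2020} is stated over pseudoaffinoid bases, so your first argument goes through once you drop the artinian hypothesis and the unnecessary reduction to small extensions.

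The paper takes a more direct route and avoids obstruction theory altogether: rather than matching cohomology classes, it writes down the candidate lift explicitly. Given $D$ over $R$ with $D':=(R/I)\otimes_R D=D_{\rig,K}(M')$, one observes that $ID\cong (R/\ann_R I)\otimes_{R/I}D'$ also arises from a Galois representation $M''$; applying the snake lemma to the map $\varphi-1$ on the short exact sequence $0\to\widetilde\Lambda_{R,(0,b]}\otimes D''\to\widetilde\Lambda_{R,(0,b]}\otimes D\to\widetilde\Lambda_{R,(0,b]}\otimes D'\to 0$ yields an exact sequence $0\to M''\to(\widetilde\Lambda_{R,\rig,K}\otimes D)^{\varphi=1}\to M'\to 0$ of $R$-modules with continuous $\Gal_K$-action, and one reads off that $M:=(\widetilde\Lambda_{R,\rig,K}\otimes D)^{\varphi=1}$ is projective of the correct rank with $D_{\rig,K}(M)=D$. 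Your ``conceptual'' second route via \'etaleness and constancy of the slope polygon is closer in spirit to this---both ultimately recover $M$ from the equivalence between Galois representations and \'etale $(\varphi,\Gamma)$-modules---but the paper makes the construction of $M$ explicit rather than appealing to slope theory, which has the virtue of handling the non-local base directly without any pointwise argument.
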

\begin{proof}
	We need to show that if $D$ is a projective $(\varphi,\Gamma_K)$-module over a pseudoaffinoid algebra $R'$, and $I\subset R'$ is a square-zero ideal such that $(R'/I)\otimes_{R'}D$ arises from a family of Galois representations, then $D$ also arises from a family of Galois representations.  Indeed, we have a short exact sequence
	\[      0\rightarrow ID\rightarrow D\rightarrow (R'/I)\otimes_{R'}D\rightarrow 0    \]
	By assumption, $D':=(R'/I)\otimes_{R'}D$ arises from a family of $\Gal_K$ representations $M'$ over $R'/I$, and since 
	\[	D'':=ID \cong I\otimes_{R'} D	\cong (R'/\ann_{R'}I)\otimes_{R'/I}D	\]
	it arises from a family of $\Gal_K$ representations $M''$ over $R'/\ann_{R'}I$.  Since $D$ has a model $D_b$ over $\Lambda_{R',(0,b],K}$, we have a commutative diagram
	\[
                \begin{tikzcd}
			0 \ar[r] & \widetilde\Lambda_{R',(0,b/p]}\otimes_{R'}D'' \ar[r] & \widetilde\Lambda_{R',(0,b/p]}\otimes_{R'}D \ar[r] & \widetilde\Lambda_{R',(0,b/p]}\otimes_{R'}D' \ar[r] & 0      \\      
			0 \arrow{r} & \widetilde\Lambda_{R',(0,b]}\otimes_{R'}D'' \arrow{r}\arrow{u}{\varphi-1} & \widetilde\Lambda_{R',(0,b]}\otimes_{R'}D \arrow{r}\arrow{u}{\varphi-1} & \widetilde\Lambda_{R',(0,b]}\otimes_{R'}D' \arrow{r}\arrow{u}{\varphi-1} & 0
                \end{tikzcd}
        \]
	By construction, $\widetilde\Lambda_{R',(0,b]}\otimes_{R'}D''\cong \widetilde\Lambda_{R',(0,b]}\otimes \left(\widetilde\Lambda_{R_0',[0,b]}\otimes_{R_0'}M_0''\right)$ and $\widetilde\Lambda_{R',(0,b]}\otimes_{R'}D'\cong \widetilde\Lambda_{R',(0,b]}\otimes \left(\widetilde\Lambda_{R_0',[0,b]}\otimes_{R_0'}M_0'\right)$, for some integral models $M_0''$ and $M_0'$ (perhaps after localizing on $\Spa R'$ and shrinking $b$).  Therefore, we have quasi-isomorphisms
	\begin{align*}
		[M'']&\xrightarrow\sim [\widetilde\Lambda_{R',[0,b]}\otimes_{R_0'}M_0''\xrightarrow{\varphi-1}\widetilde\Lambda_{R',[0,b/p]}\otimes_{R_0'}M_0'']	\\
		&\xrightarrow\sim [\widetilde\Lambda_{R',(0,b]}\otimes_{R'}D'' \xrightarrow{\varphi-1}\widetilde\Lambda_{R',(0,b/p]}\otimes_{R'}D'']
	\end{align*}
	and
	\begin{align*}
	[M']&\xrightarrow\sim [\widetilde\Lambda_{R',[0,b]}\otimes_{R_0'}M_0'\xrightarrow{\varphi-1}\widetilde\Lambda_{R',[0,b/p]}\otimes_{R_0'}M_0']	\\
	&\xrightarrow\sim [\widetilde\Lambda_{R',(0,b]}\otimes_{R'}D'\xrightarrow{\varphi-1}\widetilde\Lambda_{R',(0,b/p]}\otimes_{R'}D']
	\end{align*}
	Then the snake lemma implies that we have an exact sequence
	\[      0\rightarrow M''\rightarrow \left(\widetilde\Lambda_{R',\rig,K}\otimes D\right)^{\varphi=1}\rightarrow M'\rightarrow 0   \]
	of $R'$-modules equipped with continuous $R'$-linear actions of $\Gal_K$, with $M'$ finite projective over $R'/I$ and $M''\cong(R'/\ann_{R'}I)\otimes_{R'/I}M'$.  It follows that $M:=\left(\widetilde\Lambda_{R',\rig,K}\otimes D\right)^{\varphi=1}$ is a projective $R'$-module of the same rank and $D_{\rig,K}(M)=D$.
\end{proof}

In ~\cite[\textsection 2.2]{breuil-hellmann-schraen}, the authors show that the characteristic $0$ locus $X_{ {\tri},\overline\rho}^{\square,\rig}$ of the trianguline variety is equidimensional of dimension $d^2+[K:\Q_p]\frac{d(d+1)}{2}$, and generically smooth.  We note that if $\psi:\Gal_K\rightarrow \O_E^\times$ is a crystalline character, where $E/\Q_p$ is a finite extension and $\O_E$ is its ring of integers, then an identical argument shows that the rigid analytic locus $X_{ {\tri},\overline\rho}^{\square,\psi,\rig}\subset X_{ {\tri},\overline\rho}^{\square,\psi}$ is equidimensional of dimension $d^2-1+[K:\Q_p]\frac{(d+2)(d-1)}{2}$ (indeed, ~\cite[Theorem 1.2]{boeckle-iyengar-paskunas2022} provides the necessary crystalline lifts with fixed determinant).

Unfortunately, we cannot rule out components of $X_{ {\tri},\overline\rho}^{\square}$ or $X_{ {\tri},\overline\rho}^{\square,\psi}$ supported entirely in characteristic $p$, and so to deduce the same result in the pseudorigid setting, we need to repeat a large part of the argument in a neighborhood of the characteristic $p$ fiber.
\begin{prop}\label{prop: xtri equidim}
	\begin{enumerate}
		\item	The space $X_{ {\tri},\overline\rho}^{\square}$ (equipped with its underlying reduced structure) is equidimensional of dimension $d^2+[K:\Q_p]\frac{d(d+1)}{2}$.
		\item	If $X_{ {\tri},\overline\rho}^{\square,\psi,\underline\kappa}$ is non-empty, it is equidimensional of dimension $d^2-1+[K:\Q_p]\frac{d(d-1)}{2}+\dim \Spa R^{\an}$.
		\item	There is an open subspace $Z\subset\Spa R^{\an}$ such that  morphism $X_{ {\tri},\overline\rho}^{\square,\psi,\underline\kappa}|_Z\rightarrow Z$ is equidimensional of dimension $d^2-1+[K:\Q_p]\frac{d(d-1)}{2}$.
	\end{enumerate}
\end{prop}
\begin{proof}
		The proofs of the first two parts are very similar to that of ~\cite[Th\'eor\`eme 2.6]{breuil-hellmann-schraen}, and we will prove them simultaneously.  By construction, there is a universal framed deformation $\rho_{\mathrm{univ}}:\Gal_K\rightarrow\GL_d(R_{\overline\rho}^\square)$ of $\overline\rho$, and we may pull it back to $X_{ {\tri},\overline\rho}^\square$ (resp. $X_{ {\tri},\overline\rho}^{\square,\psi,\underline\kappa}$).  Then for any irreducible open affinoid $X\subset X\rightarrow X_{ {\tri},\overline\rho}^{\square}$ (resp. $X_{ {\tri},\overline\rho}^{\square,\psi,\underline\kappa}$), by ~\cite[Corollary 5.10]{bellovin2021} there is a sequence of blow-ups and normalizations $f:\widetilde X\rightarrow X$ and an open subspace $U\subset \widetilde X$ containing the characteristic $p$ locus such that $f^\ast \rho_{\mathrm{univ}}|_U$ is trianguline with parameters $f^\ast\underline\delta$.  Shrinking $U$ if necessary, we may assume that $f^\ast\underline\delta$ is regular (indeed, the pre-image of $\mathcal{T}_{\mathrm{reg}}^d$ in $U$ is open, and by construction $U$ contains a Zariski dense set of points corresponding to trianguline representations with regular parameters).  Furthermore, there is a Zariski-dense and open subspace $V\subset X_{ {\tri},\overline\rho}^{\square}$ (resp. $X_{ {\tri},\overline\rho}^{\square,\psi,\underline\kappa}$) such that $f^{-1}(V)\subset U$ and $f$ defines an isomorphism $f^{-1}(V)\xrightarrow\sim V$.

			Over $U$, the $(\varphi,\Gamma_K)$-module $D:=D_{\rig,K}(f^\ast \rho_{\mathrm{univ}})$ is equipped with an increasing filtration $\Fil^\bullet D$ such that $\gr^iD\cong \Lambda_{U,\rig,K}(f^\ast\delta_i)\otimes \mathscr{L}_i$ for some line bundle $\mathscr{L}_i$ on $U$.  We may therefore construct a $\G_{m,U}^d$-torsor $U^\square\rightarrow U$ trivializing each of the $\mathscr{L}_i$; since $U^\square$ carries the data $(D,\Fil^\bullet D, f^\ast\underline\delta, \underline\nu)$, where $\underline\nu$ is the set of trivializations $\nu_i:\gr^iD\xrightarrow\sim  \Lambda_{U,\rig,K}(f^\ast\delta_i)$, there is a morphism $U^\square\rightarrow \mathcal{S}_d^\square$.

			Let $V^\square\subset U^\square$ denote the pullback of $U^\square\rightarrow U$ to $V$.  We claim that $V^\square\rightarrow \mathcal{S}_d^\square$ is smooth of relative dimension $d^2$.  To see this, suppose we have a pseudoaffinoid algebra $R'$, a morphism $\Spa R'\rightarrow \mathcal{S}_d^\square$, and a square-zero ideal $I\subset R'$ such that the composition $\Spa R'/I\hookrightarrow \Spa R'\rightarrow \mathcal{S}_d^\square$ is in the image of $V^\square$.  Then there is a ring of definition $R_0'\subset R'/I$ such that the homomorphism $R_{\overline\rho}^\square\rightarrow R'/I$ factors through $R_0'$; we let $M_0'\cong R_0'^{\oplus d}$ be the pullback of the universal framed deformation to $R_0'$ and we let $M':=R'/I\otimes_{R_0'}M_0'$.
			
			By Lemma~\ref{lemma: phi-gamma formally smooth}, there is a $\Gal_K$-representation $M$ over $R'$ such that $(R'/I)\otimes_{R'}M\xrightarrow{\sim}M'$.  It follows that $M_0'$ and its basis lift to a free module $M_0$ over some ring of definition $R_0'\subset R'$, such that $R'\otimes_{R_0'}M_0=M$.  Moreover, $M'$ is residually a lift of $\overline\rho$ at every maximal point of $\Spa R'$, so $M$ is as well.  
			By~\cite[Theorem 3.8]{wang-erickson}, $M_0$ corresponds to a $\Spa R_0'$-point of $\Spf R_{\overline\rho}^\square$, and by construction $M$ corresponds to a $\Spa R'$-point of $X_{ {\tri},\overline\rho}^{\square}$ deforming $M'$.  Since $M'$ corresponds to a $\Spa (R'/I)$-point of the Zariski-open subspace $V\subset X_{ {\tri},\overline\rho}^{\square}$, the image of the morphism $\Spa R'\rightarrow X_{ {\tri},\overline\rho}^{\square}$ also lands in $V$.  Since $D$ is trianguline with regular parameters and trivialized quotients, the morphism $\Spa R'\rightarrow V$ lifts to a morphism $\Spa R'\rightarrow V^\square$.

			The claim that $V^\square\rightarrow \mathcal{S}_d^\square$ has relative dimension $d^2$ follows because ``changing the framing'' makes $V^\square$ a $\left(\GL_d\right)^{\an}$-torsor over its image in $\mathcal{S}_d^\square$.

			Now we can compute the dimension.  By Proposition~\ref{prop: dim S_d}, we see that $V^\square$ is equidimensional of dimension $d^2 + \frac{d(d-1)}{2}[K:\Q_p] + d[K:\Q_p] + d$ (resp. $d^2 + \frac{d(d-1)}{2}[K:\Q_p] + d[K:\Q_p] + d + \dim \Spa R^{\an}$).  Since $V^\square\rightarrow V$ is a $\G_{m,V}^d$-torsor, it follows that $V$ is equidimensional of dimension $d^2+\frac{d(d+1)}{2}[K:\Q_p]$ (resp.  $d^2 + \frac{d(d-1)}{2}[K:\Q_p] + d[K:\Q_p] +\dim \Spa R^{\an}$.  Finally, $V\subset X$ is Zariski-dense, so we are done.

		For the last part, we define $V^{\square,\psi,\underline\kappa}$ via the pullback
			\[
				\begin{tikzcd}
					V^{\square,\psi,\underline\kappa} \ar[r]\ar[d] & V^\square \ar[d]	\\
					\mathcal{S}_d^{\square,\delta_\psi,\underline\kappa} \ar[r]\ar[d] & \mathcal{S}_d^\square \ar[d]	\\
					\Spa R^{\an} \ar[r] & \G_{m,Y}
				\end{tikzcd}
			\]
			where $Y:=\widehat{(\O_K^\times)^d}$ and the morphism $\Spa R^{\an}\rightarrow\G_{m,Y}$ is given by $\underline\kappa$ and $\delta_\psi$.  Since $V^\square\rightarrow \G_{m,Y}$ is smooth, its image is open, and the pre-image in $\Spa R^{\an}$ is open, as well.
\end{proof}

\begin{remark}
Suppose that $x\in\Spa R^{\an}$ is a maximal point such that the fiber of $X_{ {\tri},\overline\rho}^{\square,\psi,\underline\kappa}$ contains a point $(\rho_x,\underline\delta_x)$ such that $\underline\delta_x$ is a regular parameter for $D_{\rig}(\rho_x)$.  Then if we apply Proposition~\ref{prop: xtri equidim} with $R=k(x)^+$, we see that every irreducible component of the fiber containing $(\rho_x,\underline\delta_x)$ has dimension $d^2 + \frac{d(d-1)}{2}[K:\Q_p] + d[K:\Q_p]$.
\end{remark}

\begin{example}\label{example: tri var over wt space residual representation}
	We return to the setting of Example ~\ref{example: tri var over wt space}, where $K=\Q_p$, $d=2$, $R=\Z_p[\![T_0]\!]$ corresponds to integral weight space for a split maximal torus of $\GL_2/\Z_p$, $\psi_0:\Gal_{\Q_p}\rightarrow R^\times$ is an unramified character, and there is a universal pair of characters $\lambda_1,\lambda_2:\Z_p^\times\rightrightarrows R^\times$. We again set $\psi:=\psi_0\left( \lambda_1\lambda_2 \chi_{\cyc}\right)^{-1}$ and $\underline\kappa:(\lambda_2^{-1},\left(\lambda_1\chi_{\cyc}\right)^{-1})$.  Then if $X_{ {\tri},\overline\rho}^{\square,\psi,\underline\kappa}$ is non-empty, each irreducible component is $6$-dimensional.  

	Moreover, suppose there is a characteristic-$p$ point $(\rho_x,\underline\delta_x)$ with specified weight and determinant, such that $\rho_x$ is trianguline with regular parameter $\delta_x$.  Then the fiber over $\underline\delta_x|_{(\Z_p^\times)^2}$ is $4$-dimensional; since this is one of $p-1$ disjoint characteristic-$p$ fibers, we see that the irreducible component containing $(\rho_x,\underline\delta_x)$ contains a dense open characteristic-$0$ subspace, consisting of points in $U_{ {\tri}}^\square(\overline\rho)^{\mathrm{reg}}$ (in the notation of ~\cite[D\'efinition 2.4]{breuil-hellmann-schraen}).
\end{example}

Now we consider a global setup.  Let $F$ be a number field, and suppose that $\overline\rho:\Gal_{F}\rightarrow\GL_d(\F)$ is an absolutely irreducible representation, unramified outside a finite set of primes $S$.

Then the homomorphisms
\[	R_{\overline\rho_v}^\square\rightarrow R_{\overline\rho,S}^\square	\]
for each $v\mid p$ induce a morphism
\[	\left(\Spa R_{\overline\rho,S}^\square\right)^{\an}\times \prod_{v\mid p}\mathcal{T}^d\rightarrow \prod_{v\mid p}\left(\left(\Spa R_{\overline\rho_v}^\square\right)^{\an}\times\mathcal{T}^d\right)	\]
and we define $X_{ {\tri},\overline\rho,S}^\square$ to be the pre-image of $\prod_{v\mid p}X_{ {\tri},\overline\rho_v}^\square$.

If $R$ is a complete local noetherian $\Z_p$-algebra with maximal ideal $\mathfrak{m}_R$ and finite residue field, and $\psi:\Gal_F\rightarrow R^\times$ is a continuous character such that $\det\overline\rho = \psi\mod\mathfrak{m}_R$, the homomorphisms
\[	R_{\overline\rho_v}^{\square,\psi_v}\rightarrow R_{\overline\rho,S}^{\square,\psi}	\]
and
\[	R_{\overline\rho,\loc}^{\square,\psi}\rightarrow R_{\overline\rho,S}^{\square,\psi}	\]
induce a sequence of morphisms
\begin{equation*}
	\resizebox{\displaywidth}{!}{
	\begin{tikzcd}[ampersand replacement=\&]
		\left(\Spa R_{\overline\rho,S}^{\square,\psi}\right)^{\an}\times \prod_{v\mid p}\mathcal{T}^d\ar[r] \& \left(\Spa R_{\overline\rho,\loc}^{\square,\psi}\right)^{\an}\times\prod_{v\mid p}\mathcal{T}^d \arrow[r,hook] \& \prod_{v\mid p}\left(\left(\Spa R_{\overline\rho_v}^{\square,\psi_v}\right)^{\an}\times\mathcal{T}^d\right)      
	\end{tikzcd}
}
\end{equation*}
where $\psi_v:=\psi|_{\Gal_{F_v}}$.  We define $X_{ {\tri},\overline\rho,S}^{\square,\psi}$ and $X_{ {\tri},\overline\rho,\loc}^{\square,\psi}$ to be the pre-images of $\prod_{v\mid p}X_{ {\tri},\overline\rho_v}^{\square,\psi_v}$ in $\left(\Spa R_{\overline\rho,S}^{\square,\psi}\right)^{\an}\times \prod_{v\mid p}\mathcal{T}^d$ and $\left(\Spa R_{\overline\rho,\loc}^{\square,\psi}\right)^{\an}\times\prod_{v\mid p}\mathcal{T}^d$, respectively.

If we additionally have $d$-tuples of characters $\underline\kappa_v:=(\kappa_{v,1},\ldots,\kappa_{v,d})$, where $\kappa_{v,i}:\O_{F_v}^\times\rightarrow\O(X)^\times$ is a continuous character, and we set $X:=\left(\Spa R\right)^{\an}$, we may form the spaces
\begin{equation*}
	\resizebox{\displaywidth}{!}{
	\begin{tikzcd}[ampersand replacement=\&]
		X_{ {\tri},\overline\rho,S}^{\square,\psi,\underline\kappa} \ar[r] \arrow[d,symbol=\subset] \& X_{ {\tri},\overline\rho,\loc}^{\square,\psi,\underline\kappa} \arrow[r,hook] \arrow[d,symbol=\subset] \& \prod_{v\mid p}X_{ {\tri},\overline\rho_v}^{\square,\psi_v,\underline\kappa_v} \arrow[d,symbol=\subset]	\\
		\left(\Spa R_{\overline\rho,S}^{\square,\psi}\right)^{\an}\times \prod_{v\mid p}\mathcal{T}^d \ar[r] \& \left(\Spa R_{\overline\rho,\loc}^{\square,\psi}\right)^{\an}\times\prod_{v\mid p}\mathcal{T}^d \arrow[r,hook] \& \prod_{v\mid p}\left(\left(\Spa R_{\overline\rho_v}^{\square,\psi_v}\right)^{\an}\times\mathcal{T}^d\right)
\end{tikzcd}}
\end{equation*}

In particular, suppose we have fixed a neat level $K=K^pI$, as in sections~\ref{section: extended eigenvarieties} and ~\ref{section: modular forms}, and consider the ring $R=\Z_p[\![T_0/\overline{Z(K)}]\!]$ corresponding to integral weight space.  Since $T_0=\prod_{v\mid p}(\Res_{\O_{F_v}/\Z_p}T_v)(\Z_p)$, we have homomorphisms $\Z_p[\![T_v(\O_{F_v})]\!]\rightarrow R$, and hence morphisms $\Spa R\rightarrow \Spa \Z_p[\![T_v(\O_{F_v})]\!]$.  Suppose we have a determinant character $\psi:\Gal_F\rightarrow R^\times$ and a set of weights $\underline\kappa_v:=(\kappa_{v,1},\ldots,\kappa_{v,d}):\O_{F_v}^\times\rightarrow \O(\mathscr{W}_F)^\times$ for each $v\mid p$, such that $\psi|_{\Gal_{F_v}}$ and $\underline\kappa_v$ are compatible for all $v$, and such that $\psi_v$ and $\underline\kappa_v$ factor through $\Z_p[\![T_v(\O_{F_v})]\!]\rightarrow R$ for all $v$, i.e., they depend only on the projection to $\Spa \Z_p[\![T_v(\O_{F_v})]\!]$.
\begin{prop}\label{prop: rel dim loc tri}
	Under the above assumptions, there is an open subspace $Z\subset \mathscr{W}_F$ such that $X_{ {\tri},\overline\rho,\loc}^{\square,\psi,\underline\kappa}|_Z\rightarrow Z$ is equidimensional of dimension $\lvert\Sigma_p\rvert(d^2-1)+[F:\Q]\frac{d(d-1)}{2}$. 
\end{prop}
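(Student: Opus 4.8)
The plan is to express $X_{\tri,\overline\rho,\loc}^{\square,\psi,\underline\kappa}\rightarrow\mathscr{W}_F$ as a fiber product of the local spaces $X_{\tri,\overline\rho_v}^{\square,\psi_v,\underline\kappa_v}$ over the appropriate weight spaces, and then add up the relative dimensions coming from the previous proposition. More precisely, $R_{\overline\rho,\loc}^{\square,\psi}=\htimes_{v\mid p}R_{\overline\rho_v}^{\square,\psi_v}$ over $R_{\det\overline\rho,\loc}$, so $X_{\tri,\overline\rho,\loc}^{\square,\psi,\underline\kappa}$ is the fiber product over $\mathscr{W}_F$ of the base changes $X_{\tri,\overline\rho_v}^{\square,\psi_v,\underline\kappa_v}\times_{\widehat{\O_{F_v}^\times}^d\times\G_m^{\an}}\mathscr{W}_F$, where the map $\mathscr{W}_F=\Spa R\rightarrow \widehat{\O_{F_v}^\times}^d\times\G_m^{\an}$ at the place $v$ is cut out by $\underline\kappa_v$ together with the product of central characters $\delta_{\psi_v}(\varpi_v)$ (this uses the stated hypothesis that $\psi_v$ and $\underline\kappa_v$ factor through $\Z_p[\![T_v(\O_{F_v})]\!]\rightarrow R$, so these really are morphisms out of $\mathscr{W}_F$).

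The key computation is then the relative dimension of each $X_{\tri,\overline\rho_v}^{\square,\psi_v,\underline\kappa_v}\rightarrow\mathscr{W}_F$. First I would observe that part (2) of the preceding Proposition, applied with $K=F_v$, says that the fibers of $X_{\tri,\overline\rho_v}^{\square,\psi_v,\underline\kappa_v}\rightarrow(\Spa R)^{\an}$ are equidimensional of dimension $d^2-1+[F_v:\Q_p]\frac{d(d-1)}{2}$. Here one has to be slightly careful: that proposition was stated for a base $X=(\Spa R)^{\an}$ carrying the characters $\psi_v,\underline\kappa_v$ directly, and the same argument with $X=\mathscr{W}_F$ (pulling back $\psi_v,\underline\kappa_v$ along $\mathscr{W}_F\rightarrow \Spa\Z_p[\![T_v(\O_{F_v})]\!]$) gives the analogous equidimensionality statement over $\mathscr{W}_F$; alternatively one can first base change $X_{\tri,\overline\rho_v}^{\square,\psi_v,\underline\kappa_v}$ (with $X=\mathscr{W}_F$) and note the fibers are unchanged. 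Summing over the $|\Sigma_p|$ places $v\mid p$ and using $\sum_{v\mid p}[F_v:\Q_p]=[F:\Q]$, the fiber of the fiber product $X_{\tri,\overline\rho,\loc}^{\square,\psi,\underline\kappa}\rightarrow\mathscr{W}_F$ over a point $x$ is the product of the fibers of the individual $X_{\tri,\overline\rho_v}^{\square,\psi_v,\underline\kappa_v}\rightarrow\mathscr{W}_F$ over $x$, hence equidimensional of dimension
\[
	\sum_{v\mid p}\left(d^2-1+[F_v:\Q_p]\tfrac{d(d-1)}{2}\right)=\lvert\Sigma_p\rvert(d^2-1)+[F:\Q]\tfrac{d(d-1)}{2},
\]
as desired.

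The main obstacle is the bookkeeping around equidimensionality being preserved under fiber products: the fiber of a product of morphisms over a point is the product of the fibers, and the (Krull, or here analytic local) dimension of a product of equidimensional pseudorigid spaces is the sum of the dimensions, but one should confirm this behaves well in the pseudorigid category (e.g.\ via the flatness/openness properties already used to prove Proposition~\ref{prop: dim S_d det wts}, or by noting that the relevant constructions are smooth over $\mathscr{W}_F$ after passing to the torsors $V^{\square,\psi,\underline\kappa}$). In fact the cleanest route avoids fibers altogether: each $V^{\square,\psi_v,\underline\kappa_v}\rightarrow\mathscr{W}_F$ (with base $X=\mathscr{W}_F$) is \emph{smooth} of relative dimension $\frac{d(d-1)}{2}[F_v:\Q_p]+d^2+d-1$ by Proposition~\ref{prop: dim S_d det wts} and the relative-dimension-$d^2$ claim, so their fiber product over $\mathscr{W}_F$ is smooth over $\mathscr{W}_F$ of relative dimension $\sum_v(\frac{d(d-1)}{2}[F_v:\Q_p]+d^2+d-1)$, and then quotienting by the $(\Gm^{\ad})^{d|\Sigma_p|}$-torsor structure (one factor of $\Gm^{\ad}$ per graded piece per place, minus none) subtracts $d\lvert\Sigma_p\rvert$, yielding $\lvert\Sigma_p\rvert(d^2-1)+[F:\Q]\frac{d(d-1)}{2}$ on the nose. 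Either way the arithmetic is forced; the only real content is checking that the local smoothness argument of the previous proposition goes through verbatim with $\mathscr{W}_F$ in place of $(\Spa R)^{\an}$, which it does since $\psi_v$ and $\underline\kappa_v$ factor through $\mathscr{W}_F$ by hypothesis.
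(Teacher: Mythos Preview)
Your proposal is correct and follows essentially the same approach as the paper: both express $X_{\tri,\overline\rho,\loc}^{\square,\psi,\underline\kappa}\rightarrow\mathscr{W}_F$ as a pullback of the product of local morphisms $\prod_{v\mid p}X_{\tri,\overline\rho_v}^{\square,\psi_v,\underline\kappa_v}\rightarrow\prod_{v\mid p}(\Spa\Z_p[\![T_v(\O_{F_v})]\!])^{\an}$ along the natural map from $\mathscr{W}_F$ (using the hypothesis that $\psi_v,\underline\kappa_v$ factor through $\Z_p[\![T_v(\O_{F_v})]\!]$), and then sum the local fiber dimensions $d^2-1+[F_v:\Q_p]\frac{d(d-1)}{2}$ from part~(2) of the preceding proposition. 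The paper states this as a single pullback square rather than your fiber-product-over-$\mathscr{W}_F$ formulation, and omits your extra smoothness-via-torsors justification, but the content is the same.
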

\begin{proof}
	Viewing $\psi_v$ as a character $\Gal_{F_v}\rightarrow \Z_p[\![T_v(\O_{F_v})]\!]^\times$ and viewing $\underline\kappa_v=(\kappa_{v,1},\ldots,\kappa_{v,d})$ as a $d$-tuple of characters $\O_{F_v}^\times\rightarrow \Z_p[\![T_v(\O_{F_v})]\!]^\times$, we have a pullback diagram
	\[
		\begin{tikzcd}
			X_{ {\tri},\overline\rho,\loc}^{\square,\psi,\underline\kappa} \ar[r]\ar[d] & \prod_{v\mid p}X_{ {\tri},\overline\rho_v}^{\square,\psi_v,\underline\kappa_v} \ar[d]	\\
			\mathscr{W}_F \ar[r] & \prod_{v\mid p}\left(\Spa \Z_p[\![T_v(\O_{F_v})]\!]\right)^{\an}
		\end{tikzcd}
	\]
	The right vertical morphism has relative dimension 
	\[	\sum_{v\mid p}\left(d^2-1 + [F_v:\Q_p]\frac{d(d-1)}{2}\right) = \lvert\Sigma_p\rvert(d^2-1)+[F:\Q]\frac{d(d-1)}{2}	\]
	over an open subspace of $\prod_{v\mid p}\left(\Spa \Z_p[\![T_v(\O_{F_v})]\!]\right)^{\an}$, so the morphism $X_{ {\tri},\overline\rho,\loc}^{\square,\psi,\underline\kappa}\rightarrow \mathscr{W}_F$ does, as well. 
\end{proof}

The case we will be most interested in is the case where $F/\Q$ is cyclic and totally split at $p$, and $d=2$.  In that case, $X_{ {\tri},\overline\rho_v}^{\square,\psi_v,\underline\kappa_v}\rightarrow \Spa \Z_p[\![T_v(\O_{F_v})]\!]^{\an}$ has relative dimension $4$ over an open subspace of $\Spa \Z_p[\![T_v(\O_{F_v})]\!]^{\an}$ for each $v\mid p$, and hence $X_{ {\tri},\overline\rho,\loc}^{\square,\psi,\underline\kappa}\rightarrow \mathscr{W}_F$ has relative dimension $4[F:\Q]$ over an open subspace of $\mathscr{W}_F$.

\subsection{Trianguline deformation rings}

We have constructed the trianguline varieties $X_{ {\tri},\overline\rho}^\square$ and $X_{ {\tri},\overline\rho}^{\square,\psi,\underline\kappa}$ as subspaces of the (non-quasicompact) pseudorigid space $\G_m^{\ad,d}\times_{\Z}\widehat{(\O_K^\times)^d}\times\left(\Spa R_{\overline\rho}^\square\right)^{\an}$.  However, the advantage of working with general pseudorigid spaces is that we can construct integral models, so long  as we bound the slope.

We will apply this to find formal models for pieces of our trianguline varieties.
Recall that when $K$ is a finite extension of $\Q_p$ and $\overline\rho$ is a representation of $\Gal_K$, we defined $X_{ {\tri},\overline\rho}^\square$ and $X_{{\tri},\overline\rho}^{\square,\psi,\underline\kappa}$ as analytic subspaces of $\G_m^{\ad}\times\Spa\Z_p[\![(\O_K^\times)^d]\!]\times\left(\Spa R_{\overline\rho}^{\square}\right)^{\an}$ and $\G_m^{\ad}\times\Spa\Z_p[\![(\O_K^\times)^d]\!]\times\left(\Spa R\htimes R_{\overline\rho}^{\square}\right)^{\an}$, respectively.  By construction, $\Spa\Z_p[\![(\O_K^\times)^d]\!]\times\left(\Spa R_{\overline\rho}^{\square}\right)^{\an}$ has a quasicompact integral model, but $\G_m^{\ad}\times\Spa\Z_p[\![(\O_K^\times)^d]\!]\times\left(\Spa R_{\overline\rho}^{\square}\right)^{\an}\times\left(\Spa R_{\overline\rho}^{\square}\right)^{\an}$ does not; in particular, it is not equal to the analytic locus of $\Spa R_{\overline\rho}^{\square}\htimes\Z_p[\![(\O_K^\times)^d]\!]\left\langle T,T^{-1}\right\rangle$ (and similarly for $\G_m^{\ad}\times \Spa\Z_p[\![(\O_K^\times)^d]\!]\times\left(\Spa R\htimes R_{\overline\rho}^{\square}\right)^{\an}$).

In order to construct integral models of annuli, we begin with an illustrative example.
\begin{example}

	Suppose $R=\Z_p[\![u]\!]$ and $h$ is an integer.  We may cover $Y:=\Spa\left(\Z_p[\![u]\!]\right)^{\an}$ with the open affinoid subspaces $U_1:=\Spa\left(\Q_p\left\langle \frac u p\right\rangle\right)$ and $U_2:=\Spa\left(\Z_p[\![u]\!]\left\langle \frac p u\right\rangle\left[\frac 1 u\right]\right)$; their intersection is the circle $U_1\cap U_2=\Spa\left(\Q_p\left\langle \frac u p, \frac p u\right\rangle\right)$.

The annulus $C_{U_1,h}$ is affinoid, with coordinate ring
		\[	\Q_p\left\langle \frac u p,p^hT,T_2\right\rangle/(TT_2-p^h) = \Q_p\left\langle \frac u p,T,T_1,T_2\right\rangle/(T_1-p^hT,TT_2-p^h)	\]
Restricting to $U_1\cap U_2$, we obtain an affinoid with coordinate ring
\[	\Z_p[\![u]\!]\left\langle \frac u p,\frac p u,T,T_1,T_2\right\rangle\left[\frac 1 u\right]/(T_1-u^h\left(\frac p u\right)^hT,TT_2-u^h\left(\frac p u\right)^h)	\]
Writing $T_1':=\left(\frac u p\right)^hT_1$ and $T_2':=\left(\frac u p\right)^hT_2$, we get
\[	\Z_p[\![u]\!]\left\langle \frac u p,\frac p u,T,T_1',T_2'\right\rangle\left[\frac 1 u\right]/(T_1'-u^hT,TT_2'-u^h)	\]
which is also the restriction of $C_{U_2,h}$ to $U_1\cap U_2$.

The affinoid spaces $C_{U_i,h}$ and $C_{U_1\cap U_2,h}$ have integral models, compatible with glueing.  Thus, we see that $C_{Y,h}$ in this case admits a formal model which lives over the blow-up $\mathrm{Bl}_{(p,u)}R$.

\end{example}

Returning to the general case, we may choose a $\Z_p$-basis for the torsion-free part of $\O_K^\times$ and corresponding coordinates on $\Spa \Z_p[\![\O_K^\times]\!]^{\an}$.  Then we may consider relative annuli over $\Spa \Z_p[\![\widehat{\O_K^\times}]\!]$; as above, these annuli glue to a space $\mathcal{T}_h$ 
which has an integral model $\mathfrak{T}_h$ over the blow-up $\mathrm{Bl}_I\Z_p[\![\O_K^\times]\!]$ (where $I=\cap \mathfrak{m}$ is the intersection of the maximal ideals of $\Z_p[\![\O_K^\times]\!]$).  Similarly, given some integer $d\geq 1$, we may define relative annuli $\mathcal{T}_h^d\subset \mathcal{T}^d$ over $\Spa \Z_p[\![(\O_K^\times)^d]\!]^{\an}$, which have integral models $\mathfrak{T}_h^d$.

Now we may set
\[	X_{{\tri},\overline\rho,\leq h}^\square:= X_{{\tri},\overline\rho}^\square\cap \left(\Spa R_{\overline\rho}^\square\times\mathcal{T}_h^d\right)^{\an}	\]
and
\[	X_{{\tri},\overline\rho,\leq h}^{\square,\psi,\underline\kappa}:= X_{{\tri},\overline\rho}^{\square,\psi,\underline\kappa}\cap \left(\Spa R\htimes R_{\overline\rho}^\square\times \mathcal{T}_h^d\right)^{\an}      \]

When $F$ is a totally real field and $\overline\rho$ is a representation of $\Gal_F$ unramified outside a finite set of places $S$, we may similarly define bounded global trianguline varieties $X_{{\tri},\overline\rho,S,\leq h}^\square$ and $X_{{\tri},\overline\rho,S,\leq h}^{\square,\psi,\underline\kappa}$ as subspaces of $\left(\Spa R_{\overline\rho}^\square\times\prod_{v\mid p}\mathcal{T}_h^d\right)^{\an}$ and $\left(\Spa R\htimes R_{\overline\rho}^\square\times\prod_{v\mid p}\mathcal{T}_h^d\right)^{\an}$, respectively.

\begin{remark}
	We emphasize that these bounded trianguline varieties are not canonical; they depend on a choice of coordinates on $\Spa \Z_p[\![\O_K^\times]\!]^{\an}$.  This may seem strange, but we will use our bounded trianguline varieties to study bounded-slope pieces of eigenvarieties, and the construction of these pieces of eigenvarieties \emph{also} depends on a choice of pseudouniformizer in the coefficients.
\end{remark}
\begin{remark}
	For our purposes, we in fact only need to consider relative annuli and bounded trianguline varieties over some affinoid subspace $U$ of weight space.  Nevertheless, for the sake of completeness we treat them here over all of weight space.
\end{remark}

Now we restrict to the case $K=\Q_p$, and we choose coordinates $z_1,\ldots,z_d$ on each component of $\Spa \Z_p[\![(\Z_p^\times)^{d}]\!]^{\an}$.  Write $h=a/b$, where $a,b$ are non-negative relatively prime integers.  We will construct an integral model of $X_{ {\tri},\overline\rho,\leq h}^{\square}$ using Corollary~\ref{cor: integral model in product}.  

For $z\in \{p,z_1,\ldots,z_d\}$, we get an affinoid $U_z:=\Spa R_z$, where $R_z:=\Z_p[\![(\Z_p^\times)^{d}]\!]\left\langle \frac{p}{z},\frac{z_1}{z},\ldots,\frac{z_d}{z}\right\rangle\left[\frac 1 z\right]$ with ring of integers $R_{z,0}$, inside $\Spa \Z_p[\![(\Z_p^\times)^{d}]\!]^{\an}$.  Then the restriction of $\mathcal{T}_h^d$ to $U_z$ has the presentation
\[	\Spa \Z_p[\![(\Z_p^\times)^{d}]\!]\left\langle \frac{p}{z},\frac{z_1}{z},\ldots,\frac{z_d}{z}, z^aT_1^{\pm b},\ldots, z^aT_d^{\pm b}\right\rangle\left[\frac 1 z\right]	\]
Over this space, there is a $d$-tuple $\delta_1,\ldots,\delta_d:\Q_p^\times\rightrightarrows R_z^\times$ where $\delta_i(p)=T_i$ and $(\delta_i|_{\Z_p^\times})$ is the restriction of the universal character on $(\Z_p^\times)^d$.

Given an affinoid $\Spa R\subset \left(\Spa R_{\overline\rho}^\square\right)^{\an}$ with pseudouniformizer $u\in R$, there is a $(\varphi,\Gamma)$-module $D_R$ of rank $d$ over $\Spa R$.  
To study the bounded trianguline variety, we first study morphisms $D_R\rightarrow \Lambda_{R,\rig,\Q_p}(\delta_d)$.  Equivalently, we consider the twist $D_R(\delta_d^{-1})$ over the (non-quasi-compact) space $\Spa R\times\Spa R_z\left\langle z^hT_d^{\pm 1}\right\rangle$ and consider morphisms $D_R(\delta_d^{-1})\rightarrow \Lambda_{R,\rig,\Q_p}$ to the trivial rank-$1$ $(\varphi,\Gamma)$-module.

We wish to first consider the closure $\overline Z_R$ of 
\[	Z_R:=\{(\rho_x,\delta_{d,x})\mid \text{there is a surjective map }D_R\rightarrow \Lambda_{\kappa(x),\rig,\Q_p}(\delta_{d,x})\}	\]
in $\Spa R_0\htimes R_{z,0}\left\langle z^aT_i^b,T_i'\right\rangle/(T_i^bT_i'-z^a)$.

There is a non-zero morphism at precisely the points $x\in \Spa R\times \Spa R_z$ where $H^0(D_R^\vee(\delta_d)_x)$ is non-vanishing; equivalently (by Tate duality), at precisely the points where $H^2(D_R(\delta_d^{-1}\chi_{\cyc})_x)$ is non-vanishing.  We write $Z_R'$ for the support of $H^2(D_R(\delta_d^{-1}\chi_{\cyc})_x)$.

The closure of $Z_R$ is a priori defined as a subspace of the locus
\[	\{\lvert z\rvert^h\leq \lvert T_i\rvert\leq \lvert z\rvert^h\}\subset \Spa R\times\Spa R_{z,0}\times\G_m 	\]
We first claim that there is a closed subspace of 
\[	\Spa R\times \Spa R_{z,0}\left\langle z^aT_i^b,T_i'\right\rangle/(T_i^bT_i'-z^a)	\]
whose restriction to this subspace is our original closure.

By ~\cite[Proposition 5.2]{bellovin2021} (more precisely, by the proof of the corresponding result ~\cite[Proposition 3.3]{kpx} in characteristic $0$), $H^2(D_R(\delta_d^{-1}\chi_{\cyc}))$ vanishes if $T_d$ is sufficiently $u$-adically small.  Here ``sufficiently small'' depends only on $D_R$, not on the twist by $\delta_d|_{\Z_p^\times}$.  Thus, $Z_R'$ is contained in the locus $\{T_d\geq u^N\}$ for some $N\gg0$. Since $\Spa R_0\htimes R_{z,0}\left\langle z^aT_i^b,T_i'\right\rangle/(T_i^bT_i'-z^a)$ is covered by the loci $\{\lvert T_d\rvert \geq \lvert u^{2N}\rvert\}$ and $\{\lvert T_d\rvert \leq \lvert u^{2N}\rvert\}$, the closure of $Z_R$ is well-behaved in $\Spa R_0\htimes R_{z,0}\left\langle z^aT_i^b,T_i'\right\rangle/(T_i^bT_i'-z^a)$. 

	We next consider the intersection of $Z_R$ with 
	\[      \{T=T_d\}\subset\Spa R\left\langle u^NT^{\pm 1}\right\rangle\times \Spa R_{z,0}\left\langle z^aT_d^{\pm b}\right\rangle      \]
	that is, to the locus where $T_d$ is not very $u$-adically large.
	We may apply ~\cite[Corollary 5.3]{bellovin2021} to the universal twist of $D_R$ over this space, and we conclude that $Z_R'$ is contained in the subspace $\{\lvert z^{N'}\rvert \leq \lvert u\rvert\}$ for some $N'\gg0$.  Since $Z_R\subset Z_R'$, the same is true of $Z_R$ and its Zariski closure $\overline Z_R$. 

On the other hand, if $T_d$ is $u$-adically large, say, if $T_d\geq u^{-1}$, then we have the pair of inequalities
\[	\lvert u^{-1}\rvert\leq T_d\leq \lvert z\rvert^{-h}	\]
This implies we are over the (affinoid) subspace 
\[	\{\lvert z\rvert^h \leq \lvert u\rvert\}\subset \Spa R\times \Spa R_{z,0}\left\langle z^aT_d^{\pm b}\right\rangle	\]


This lets us study the points consisting of a Galois representation together with a first step in a triangulation and their Zariski-closure; in order to proceed by induction and study the points consisting of Galois representations together with a full triangulation, we will need the following lemma:
\begin{lemma}
	Let $R$ be a pseudoaffinoid algebra with pseudouniformizer $u\in R$, and let $D$ be a family of $(\varphi,\Gamma)$-modules of rank $d$ over $R$ such that $H^0(k(x)\otimes_RD^\vee)$ is non-zero at a Zariski-dense set of maximal points $x\in\Spa R$.  Then there is a finite affinoid cover $\{U_i\}$ of $\Spa R$ and a collection of proper morphisms $\pi_i:\widetilde U_i\rightarrow U_i$ such that
	\begin{enumerate}
		\item 	There are morphisms $\lambda_i:\pi_i^\ast D\rightarrow \Lambda_{\widetilde U_i,\rig,\Q_p}\otimes \mathscr{L}$, for some line bundle $\mathscr{L}$ on $\widetilde U_i$
		\item	The kernel of $\lambda_i$ is a family of $(\varphi,\Gamma)$-modules of rank $d-1$
	\end{enumerate}
	\label{lemma: induction construct triangulation}
\end{lemma}
\begin{proof}
	After replacing $\Spa R$ with a connected component of its normalization, we may assume that $\Spa R$ is normal and irreducible.  Using ~\cite[Corollary 6.3.6(2)]{kpx}, there is a proper birational morphism $f:X_{R}\rightarrow \Spa R$ such that $H^i(f^\ast D^\vee)$ is flat for $i=0$ and has Tor-dimension at most $1$ for $i=1,2$.  For any $x\in X$, we have an exact sequence 
\[	0\rightarrow k_x\otimes H^0\left(f^\ast D^\vee\right)\rightarrow H^0\left(k_x\otimes f^\ast D_{R}^\vee\right)\rightarrow \Tor_1^{\O_X}\left(H^1(f^\ast D^\vee),k_x\right)\rightarrow 0	\]
(where we have used the low-degree exact sequences coming from the base-change spectral sequence cf. ~\cite[Corollary 3.12]{bellovin2021} and the assumption that $H^i(f^\ast D^\vee)$ has Tor-dimension at most $1$ for $i=1,2$).  Since we assumed that $H^0(k(x)\otimes_RD^\vee)$ is non-zero at a Zariski-dense set of maximal points $x\in\Spa R$, we see that $H^0(f^\ast D^\vee)$ is projective of non-zero rank.

Let $g:Y_{R}\rightarrow X_R$ be the projective space $\underline\Proj\left(\Sym H^0(f^\ast D^\vee\right)^\vee)$ over $X_R$. Since $g:Y_R\rightarrow X_R$ is flat, we have $g^\ast H^i(f^\ast D)\xrightarrow\sim H^i(g^\ast f^\ast D)$ for all $i$, and moreover, $g^\ast f^\ast D$ retains the property that $H^0(g^\ast f^\ast D)$ is flat (of non-zero rank) and $H^i(g^\ast f^\ast D)$ has Tor-dimension at most $1$ for $i=1,2$.

Over $Y_R$, there is a universal quotient $g^\ast H^0(f^\ast D^\vee)^\vee\twoheadrightarrow \O_{Y_R}(1)$, which induces an injection $\O_{Y_R}(-1)\rightarrow g^\ast H^0(f^\ast D^\vee)$ with projective cokernel. If we consider the composition
\[	\Lambda_{Y_R,\rig,\Q_p}\otimes\O_{Y_R}(-1)\rightarrow \Lambda_{Y_R,\rig,\Q_p}\otimes g^\ast H^0(f^\ast D^\vee) \rightarrow g^\ast f^\ast D^\vee	\]
we may again dualize to obtain a morphism $\lambda:g^\ast f^\ast D\rightarrow \O_{Y_R}(1)\otimes \Lambda_{Y_R,\rig,\Q_p}$.

There is a finite affinoid cover $\{\Spa R_j'\}$ of $Y_R$ trivializing $\O_{Y_R}(1)$; we let $\lambda_j$ denote the restriction of $\lambda$ to $\Spa R_j'$.  For any $x\in \Spa R_j'$, we again have an exact sequence
\[	0\rightarrow k_x\otimes H^0\left(g^\ast f^\ast D_R^\vee\right)\rightarrow H^0\left(k_x\otimes g^\ast f^\ast D_{R}^\vee\right)\rightarrow \Tor_1^{\O_X}\left(H^1(g^\ast f^\ast D_R^\vee),k_x\right)\rightarrow 0	\]
This implies in particular that the specialization of $\lambda_j$ is non-zero.  If $x$ has characteristic-$p$ residue field, this implies that the specialization of $\lambda_j$ is surjective.  As in the proof of ~\cite[Lemma 5.7]{bellovin2021}, this implies that there is an affinoid subdomain $V_j=\{\lvert p\rvert\leq \lvert u^{r_j}\rvert\}\subset \Spa R_j'$ containing the locus $\{p=0\}$ over which $\lambda_j$ is surjective.  

Let $N:=\max\{r_j\}$ and set $U_1:=\{\lvert p\rvert \leq \lvert u^N\rvert\}\subset \Spa R$.  Then the pre-image $(f\circ g)^{-1}\left(U_1\right)$ is contained in $\cup_j V_j$.  We will set $\widetilde U_1:=(f\circ g)^{-1}\left(U_1\right)$.  Then by construction, $\pi_1:\widetilde U_1\rightarrow U$ is surjective, and 
\[	\lambda|_{\widetilde U_1}:\pi_1^\ast D\rightarrow \pi_1^\ast\O_{Y_R}(1)|_{U_1}	\]
is surjective, so its kernel is a family of $(\varphi,\Gamma)$-modules of rank $d-1$.

On the other hand, set $U_2:=\{\lvert u^N\rvert \leq \lvert p\rvert\}\subset \Spa R$.  Then the pre-image $(f\circ g)^{-1}\left(U_2\right)$ is quasi-compact and contained in the characteristic-$0$ locus of $Y_R$, so we may apply the techniques of the proof of ~\cite[Theorem 6.3.9]{kpx}.  More precisely, we let $h:U_2'\rightarrow (f\circ g)^{-1}\left(U_2\right)$ be a proper birational morphism so that $H^i((f\circ g\circ h)^\ast D/t)$ is flat for $i=0$ and has Tor-dimension at most $1$ for $i=1, 2$ (again using ~\cite[Corollary 6.3.6(2)]{kpx}).  This lets us deduce that $h^\ast\lambda|_{U_2'}$ is surjective away from a proper Zariski-closed subspace, and locally on $U_2'$, its cokernel is killed by a power of $t$.  Then we make a further blow-up $\widetilde U_2\rightarrow U_2'$ such that over $U_2$, the kernel of $\lambda$ is a family of $(\varphi,\Gamma)$-modules of rank $d-1$, as desired.
\end{proof}

This permits us to use induction to deduce the following:
\begin{cor}
	Let $R$ be a pseudoaffinoid algebra with pseudouniformizer $u\in R$, and let $D$ be a family of rank-$d$ $(\varphi,\Gamma)$-modules over $R$.  Consider the Zariski closure $Z$ of the locus in $\Spa R_0\times \Spa R_{z,0}\times\G_{m,R}^d$ corresponding to points $x=(D_x,\underline\delta_x)$ where $\lvert \delta_{i,x}(p)^{\pm 1}\rvert \leq \lvert z\rvert^{-h}$ for all $i$, and $\underline\delta_x$ is a regular parameter of $D_x$.  Then there is some $N'\gg 0$ such that
	\[	Z\subset  \{\lvert z^{N'}\rvert \leq \lvert u\rvert\text{ for all }i\}\subset \Spa R\times \Spa R_z\left\langle z^hT_i^{\pm 1}\right\rangle	\]
\end{cor}

This is precisely the condition we need to apply Corollary~\ref{cor: integral model in product}, so the closure we are interested in is well-behaved in $\Spf R_0\htimes R_{z,0}\left\langle z^aT_i^b,T_i'\right\rangle/(T_i^bT_i'-z^a)$.

Letting $\Spa R$ range over a (finite) cover of $\left(\Spa R_{\overline\rho}^\square\right)^{\an}$, we obtain a closed subspace of $\Spf R_{\overline \rho}^\square\times \Spa R_{z,0}\left\langle z^aT_i^b,T_i'\right\rangle/(T_i^bT_i'-z^a)$. 
Letting $z$ range over $\{p,z_1,\ldots,z_d\}$, in turn, we may glue to get a closed subspace of $\Spf R_0\times\mathfrak{T}_h^d$, yielding the desired integral models of pieces of trianguline varieties:
\begin{cor}
	Suppose that $\overline\rho$ is a representation of $\Gal_K$, where $K$ is a finite extension of $\Q_p$, or of $\Gal_F$, where $F$ is a totally real number field (in which case we assume $\rho$ is unramified outside a finite set of places $S$).  Then there are formal schemes $\mathfrak{X}_{{\tri},\overline\rho,\leq h}^{\square}$
(resp. $\mathfrak{X}_{{\tri},\overline\rho,S,\leq h}^{\square}$)
and $\mathfrak{X}_{{\tri},\overline\rho,\leq h}^{\square,\psi,\underline\kappa}$ (resp. $\mathfrak{X}_{{\tri},\overline\rho,S,\leq h}^{\square,\psi,\underline\kappa}$), which are affine over $\mathfrak{T}_h$, such that $\left(\mathfrak{X}_{{\tri},\overline\rho,\leq h}^{\square}\right)^{\an}=X_{{\tri},\overline\rho,\leq h}^\square$ (resp. $\left(\mathfrak{X}_{{\tri},\overline\rho,S,\leq h}^{\square}\right)^{\an}=X_{{\tri},\overline\rho,S,\leq h}^\square$) and $\left(\mathfrak{X}_{{\tri},\overline\rho,\leq h}^{\square,\psi,\underline\kappa}\right)^{\an}=X_{{\tri},\overline\rho,\leq h}^{\square,\psi,\underline\kappa}$ (resp. $\left(\mathfrak{X}_{{\tri},\overline\rho,S,\leq h}^{\square,\psi,\underline\kappa}\right)^{\an}=X_{{\tri},\overline\rho,S,\leq h}^{\square,\psi,\underline\kappa}$).
\end{cor}

\section{Extended eigenvarieties}\label{section: extended eigenvarieties}

\subsection{Definitions}\label{subsection: eigenvariety definitions}

We briefly recall the construction of extended eigenvarieties in the two cases of interest to us.  Fix a number field $F$ and a reductive group $\H$ over $F$ which is split at all places above $p$; then we define $\G:=\Res_{F/\Q}\H$.  If we choose split models $\H_{\O_{F_v}}$ over $\O_{F_v}$ for each place $v\mid p$, along with split maximal tori and Borel subgroups $\T_v\subset \B_v\subset \H_{\O_{F_v}}$, we obtain an integral model $\G_{\Z_p}:=\prod_{v\mid p}\H_{\O_{F_v}}$ of $\G$, as well as closed subgroup schemes 
\[	\T:=\prod_{v\mid p}\Res_{\O_{F_v}/\Z_p}T_v\subset \B:=\prod_{v\mid p}\Res_{\O_{F_v}/\Z_p}\B_v	\]
Let $T_0:=\T(\Z_p)$, and let the Iwahori subgroup $I\subset\G_{\Z_p}(\Z_p)$ be the pre-image of $\B(\F_p)$ under the reduction map $\G_{\Z_p}(\Z_p)\rightarrow \G_{\Z_p}(\F_p)$.

We choose a tame level by choosing compact open subgroups $K_\ell\subset \G(\Q_\ell)$ for each prime $\ell\neq p$, such that $K_\ell=\mathcal{G}(\Z_\ell)$ for almost all primes $\ell$ (where $\mathcal{G}$ is some reductive model of $\G$ over $\Z[1/M]$ for some integer $M$).  Then we put $K^p:=\prod_{\ell\neq p}K_\ell$ and $K:=K^pI$; we assume throughout that $K$ contains an open normal subgroup $K'$ such that $[K:K']$ is prime to $p$ and
\begin{equation}\label{hyp: neatness}
	x^{-1}D^\times x\cap K' \subset \O_F^{\times, +} \text{\quad for all } x\in (\A_{F,f}\otimes_F D)^\times
\end{equation}
which is the neatness hypothesis of ~\cite{johansson-newton19}.\footnote{The authors assume throughout that the level is neat; to relax this assumption, one chooses an open normal subgroup $K'\subset K$ of index prime to $p$ such that $K'$ is neat, and considers the complexes $C_c^\bullet(K',-)^{K/K'}$ and $C_\bullet^{\mathrm{BM}}(K',-)_{K/K'}$.  Since $K/K'$ has order prime to $p$, the finite-slope subcomplexes $C_c^\bullet(K,\mathscr{D}_\kappa)_{\leq h}^{K/K'}$ and $C_\bullet^{\mathrm{BM}}(K',-)_{\leq h,K/K'}$ remain perfect.}
If $\Z$ denotes the center of $\G$, we let $Z(K):=\Z(\Q)\cap K$ and let $\overline{Z(K)}\subset T_0$ denote its $p$-adic closure.  We also let $K_\infty\subset \G(\R)$ be a maximal compact and connected subgroup at infinity, and let $Z_\infty^\circ\subset Z_\infty=:\Z(\R)$ denote the identity component.

Finally, let $\Sigma\subset T_0$ be the kernel of some splitting of the inclusion $T_0\subset \T(\Q_p)$; there are then certain submonoids $\Sigma^{\mathrm{cpt}}\subset \Sigma^+\subset \Sigma$, and we fix some $t\in \Sigma^{\mathrm{cpt}}$.

In the cases of interest to us, $F$ will be a totally real field, completely split at $p$, and $\H$ will be either $\GL_2$ or the reductive group $\underline D^\times$ corresponding to the units of a totally definite quaternion algebra over $F$ split at every place above $p$.  Fixing isomorphisms $D_v\xrightarrow{\sim}\Mat_2(F_v)$ for each place $v$ where $D$ is split, we may define integral models of $\H_v$ via $\H_{\O_{F_v}}(R_0):=\left(R_0\otimes \Mat_2(\O_{F_v})\right)^\times$ for all $\O_{F_v}$-algebras $R_0$ (whether $\H=\GL_2$ or $\underline D^\times$).  In either case, we let $\B_v\subset \H_{\O_{F_v}}$ be the standard upper-triangular Borel and we let $\T_v\subset\B_v$ be the standard diagonal maximal torus.

For either choice of $\H$, the adelic subgroup $K(N)\subset \left(\A_{F,f}\otimes \H(F)\right)^\times$ of full level $N$ is neat for $N\geq 3$ such that $N$ is prime to the finite places $v$ where $\H_v\neq \GL_2$.  Thus, if we assume $p\geq 5$, we may take $K^p$ arbitrary.

For either choice of $\H$, we define $\Sigma_v^+:=\left\{\left(\begin{smallmatrix}\varpi_v^{a_1} & 0 \\ 0 & \varpi_v^{a_2}\end{smallmatrix}\right) \mid a_2\geq a_1\right\}$ and $\Delta_v:=I_v\Sigma_v^+I_v$.  Similarly, we define $\Sigma^+:=\prod_{v\mid p}\Sigma_v^+$ and $\Delta_p:=I\Sigma^+I=\prod_{v\mid p}\Delta_v$. Then we fix $U_{\varpi_v}:=\left[I_v\left(\begin{smallmatrix}1 & \\ & \varpi_v\end{smallmatrix}\right)I_v\right]\in I_v\backslash \H(F_v)/I_v$ and $U_p:=\prod_{v\mid p}U_{\varpi_v}$.

For each prime $\ell\neq p$, we fix a monoid $\Delta_\ell\subset \G(\Q_\ell)$ containing $K_\ell$, which is equal to $\G(\Q_\ell)$ when $K_\ell=\mathcal{G}(\Z_\ell)$, such that $(\Delta_\ell, K_\ell)$ is a Hecke pair and the Hecke algebra $\mathbb{T}(\Delta_\ell, K_\ell)$ over $\Z_p$ is commutative.  Then we define $\Delta^p:=\prod_{\ell\neq p}^\prime \Delta_\ell$ and $\Delta:=\Delta^p\Delta_p$.  We write $\mathbb{T}(\Delta^p,K^p):=\otimes_{\ell\neq p}\mathbb{T}(\Delta_\ell, K_\ell)$ and $\mathbb{T}(\Delta,K):=\otimes_\ell \mathbb{T}(\Delta_\ell, K_\ell)$ for the corresponding global Hecke algebras.

A \emph{weight} is a continuous homomorphism $\kappa:T_0\rightarrow R^\times$ which is trivial on $Z(K)$, where $R$ is a pseudoaffinoid algebra over $\Z_p$.  We define \emph{weight space} $\mathscr{W}$ via
\[	\mathscr{W}(R):=\{\kappa\in\Hom_{\mathrm{cts}}(T_0, R^\times) \mid \kappa|_{Z(K)}=1\}	\]
It can be written explicitly as the analytic locus of $\Spa\left(\Z_p[\![T_0/\overline{Z(K)}]\!],\Z_p[\![T_0/\overline{Z(K)}]\!]\right)$.  Then $\mathscr{W}$ is equidimensional of dimension $1+[F:\Q]+\mathfrak{d}$, where $\mathfrak{d}$ is the defect in Leopoldt's conjecture for $F$ and $p$.

The next step is to construct a sheaf of Hecke modules over weight space, such that $U_p$ acts compactly and admits a Fredholm determinant.  We will actually use two such sheaves.  If $\kappa:T_0\rightarrow R^\times$ is a weight, then ~\cite{johansson-newton} construct certain modules of analytic functions $\mathcal{A}_\kappa^r$  and distributions $\mathcal{D}_\kappa^r$.  Here $r\in (r_\kappa,1)$, where $r_\kappa\in[1/p,1)$.  When $r_\kappa\in(1/p,1)$, they also construct $\mathcal{A}_\kappa^{<r}$ and $\mathcal{D}_\kappa^{<r}$, so that $\mathcal{D}_\kappa^r$ is the dual of $\mathcal{A}_\kappa^{<r}$ and $\mathcal{A}_\kappa^r$ is the dual of $\mathcal{D}_\kappa^{<r}$.  As in ~\cite{hansen} we fix augmented Borel--Serre complexes $C_\bullet^{\mathrm{BM}}(K,-)$ and $C_{c}^\bullet(K,-)$ for Borel--Moore homology and compactly supported cohomology, respectively, and we consider
\[	C_\bullet^{\mathrm{BM}}(K,\mathcal{A}_\kappa^r)	\]
as well as
\[	C_c^\bullet(K,\mathcal{D}_\kappa^r)\quad\text{ and }\quad C_c^\bullet(K,\mathcal{D}_\kappa^{<r})	\]
Now $\mathcal{A}_\kappa^r$ and $\mathcal{D}_\kappa^r$ are potentially orthonormalizable, so $C_\ast^{\mathrm{BM}}(K,\mathcal{A}_\kappa^r):=\oplus_i C_i^{\mathrm{BM}}(K,\mathcal{A}_\kappa^r)$ and $C_c^\ast(K,\mathcal{D}_\kappa^r):=\oplus_iC_c^i(K,\mathcal{D}_\kappa^r)$ are, as well.  Since $U_p$ acts compactly on $\mathcal{A}_\kappa^r$ and $\mathcal{D}_\kappa^r$, this implies that there are Fredholm determinants $F_\kappa^{r,\prime}$ and $F_\kappa^r$ for its action on $C_\ast^{\mathrm{BM}}(K,\mathcal{A}_\kappa^r)$ and $C_c^\ast(K,\mathcal{D}_\kappa^r)$, respectively.  

It turns out that  $F_\kappa^{r,\prime}$ and $F_\kappa^r$ are independent of $r$, by ~\cite[Proposition 4.1.2]{johansson-newton}; we set $\mathscr{D}_\kappa:=\varprojlim_r\mathcal{D}_\kappa^r$ and $\mathscr{A}_\kappa:=\varinjlim_r\mathcal{A}_\kappa^r$, and we write $F_\kappa$ and $F_\kappa^\prime$ for the Fredholm determinants of $U_p$ on $C_c^\ast(K,\mathscr{D}_\kappa)$ and $C_\ast^{\mathrm{BM}}(K,\mathscr{A}_\kappa)$, respectively.  Then $F_\kappa$ and $F_\kappa^\prime$ define spectral varieties $\mathscr{Z}\subset\A_{\mathscr{W}_F}^1$ and $\mathscr{Z}^\prime\subset\A_{\mathscr{W}_F}^1$.  We let $\pi:\mathscr{Z}\rightarrow\mathscr{W}_F$ and $\pi':\mathscr{Z}'\rightarrow\mathscr{W}_F$ be the projection on the first factor; they are flat morphisms of pseudorigid spaces.

By ~\cite[Theorem 2.3.2]{johansson-newton}, $\mathscr{Z}$ has a cover by open affinoid subspaces $V$ such that $U:=\pi(V)$ is an open affinoid subspace of $\mathscr{W}_F$ and $\pi|_V:V\rightarrow U$ is finite of constant degree.  This implies that over such a $V$, $F$ factors as $F_V=Q_VS_V$ where $Q_V$ is a multiplicative polynomial of degree $\deg \pi|_V$, $S_V$ is a Fredholm series, and $Q_V$ and $S_V$ are relatively prime.

If such a factorization exists, we may make $C_c^\bullet(K,\mathscr{D}_V)$ into a complex of $\O_{\mathscr{Z}}$-modules by letting $T$ act via $U_p^{-1}$.  Then the assignment $V\mapsto \ker Q_V^\ast(U_p)\subset C^\bullet(K,\mathscr{D}_{V})$ defines a bounded complex $\mathscr{K}^\bullet$ of coherent $\O_{\mathscr{Z}}$-modules, where $Q_V^\ast(T):=T^{\deg Q_V}Q_V(1/T)$.  If $V=\pi^{-1}(U)$, where $(U,h)$ is a slope datum, then $\mathscr{K}^\bullet$ is the slope-$\leq h$ subcomplex of $C_c^\bullet(K,\mathscr{D}_V)$.  We set
	\[	\mathscr{M}_c^\ast := \oplus_i H^i(\mathscr{K})	\]
	which is a coherent sheaf on $\mathscr{Z}$.  

	Such factorizations exist locally, by an extension of a result of ~\cite{ash-stevens}:
	\begin{prop}
		Let $R$ be a pseudoaffinoid algebra, and let $x_0\in\Spa R$ be a maximal point.  Let $F(T)\in R\{\!\{T\}\!\}$ be a Fredholm power series and fix $h\in \Q$.  Suppose $F_{x_0}\neq 0$, and let $F_{x_0}=Q_{0}S_{0}$ be the slope $\leq h$-factorization of the specialization of $F$ at $x_0$.  Then there is an open affinoid subspace $U\subset \Spa R$ containing $x_0$ such that $F_U$ has a slope $\leq h$-factorization $F_U=QS$ with $Q$ specializing to $Q_0$ and $S$ specializing to $S_0$ at $x_0$.
		\label{prop: local slope factorizations}
	\end{prop}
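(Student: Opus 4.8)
The plan is to adapt the classical slope-factorization argument of \cite{ash-stevens} to the pseudoaffinoid setting, the only new ingredients being foundational facts about Fredholm power series over pseudoaffinoid Tate algebras. Since the conclusion is local on $\Spa R$, I may replace $\Spa R$ by a small affinoid neighbourhood of $x_0$ as often as needed. I would begin at the point $x_0$ itself: the residue field $k(x_0)$ is a complete nonarchimedean field, and $F_{x_0}$ is a Fredholm series over it with only finitely many zeros of slope $\le h$, namely the zeros of $Q_0$; put $d:=\deg Q_0$. Because an entire series has only finitely many zeros in any bounded disk, there is a genuine gap in its Newton polygon: a rational $c>h$ with no zero $\alpha$ of $F_{x_0}$ satisfying $p^h<\lvert\alpha\rvert\le p^c$. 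Fixing such a $c$ and setting $\rho:=p^c$, the series $F_{x_0}$ has no zero on the circle $\lvert T\rvert=\rho$, the $d$ zeros of $Q_0$ all lie in $\lvert T\rvert<\rho$, and the remaining zeros lie in $\lvert T\rvert>\rho$; equivalently, writing $F=\sum_n c_nT^n$, the term $c_d(x_0)T^d$ strictly dominates every other term on $\lvert T\rvert=\rho$, i.e.\ $F_{x_0}$ is $\rho$-distinguished of degree $d$.

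The next step is to spread this out. Being $\rho$-distinguished of degree $d$ is cut out by finitely many strict inequalities among the $\rho$-Gauss norms of the coefficients (only finitely many matter, since $F$ is entire), hence is an open condition; so after shrinking I may assume that $F$, viewed in the relative Tate algebra $R'\langle T/\rho\rangle$ with $R'=\O(\Spa R)$, is $\rho$-distinguished of degree $d$ and has no zero on the relative circle $\lvert T\rvert=\rho$. Weierstrass preparation in $R'\langle T/\rho\rangle$ then gives $F=Q\cdot v$ with $v$ a unit and $Q\in R'[T]$ of degree $d$ with unit leading coefficient; since $F(0)=1$ the constant term $Q(0)$ is a unit, and rescaling makes $Q$ a multiplicative polynomial. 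Setting $S:=F/Q$, one checks that $S$ agrees with $v$ on $\lvert T\rvert\le\rho$ and, since $Q$ is invertible on $\lvert T\rvert\ge\rho$ (unit leading coefficient, all zeros of absolute value $<\rho$), glues to an entire series over $R'$ with $S(0)=1$; thus $S$ is a Fredholm series and $F_U=QS$. The vanishing loci of $Q$ and $S$ in $\A^1_{\Spa R'}$ are disjoint, lying in $\lvert T\rvert<\rho$ and $\lvert T\rvert>\rho$ respectively, which yields the relative primality of $Q$ and $S$; uniqueness of Weierstrass preparation over $k(x_0)$ gives $Q_{x_0}=Q_0$ and $S_{x_0}=S_0$; and since $\rho$ lies in the Newton-polygon gap of $F_{x_0}$, after one last shrinking of $\Spa R$ the factorization $F_U=QS$ is the desired slope $\le h$ factorization over all of $U$. (Alternatively, this can be phrased through the spectral variety $\mathscr{Z}(F)\subset\A^1_{\Spa R}$: the slope $\le h$ zeros form an open and closed piece of the fibre over $x_0$ which, by the local structure of spectral varieties over pseudoaffinoid bases, spreads to a finite flat cover of a neighbourhood of $x_0$, and $Q$ is its characteristic polynomial.)

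The step I expect to demand the most care — indeed the only one not already present in the classical proof — is entirely foundational: I need Weierstrass preparation, the structure theory of entire power series and their vanishing loci, the finiteness of the relevant Gauss-norm data, and the equivalence of disjointness of vanishing loci with coprimality, all for Tate algebras over a pseudoaffinoid base rather than over a nonarchimedean field. These are consequences of the noetherianity and completeness of pseudoaffinoid algebras and are largely recorded in \cite{johansson-newton} (whose construction of spectral varieties over pseudorigid weight space already packages the needed local finiteness); any statement not available there is routine to reprove. Granting them, the argument is essentially word-for-word the classical one.
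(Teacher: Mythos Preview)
Your proposal is correct and takes essentially the same approach as the paper: both adapt the Ash--Stevens slope-factorization argument (Newton-polygon gap, $\rho$-distinguished condition, Weierstrass preparation) to the pseudoaffinoid setting, with the only modification being the replacement of $p$ by a pseudo-uniformizer $u$ and the recasting of norm inequalities as rational localization conditions. The paper's proof is in fact just a two-sentence pointer to \cite[Theorem 4.5.1]{ash-stevens} with exactly this remark, so your write-up is a faithful (and more detailed) expansion of it.
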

	\begin{proof}
		The existence of the factorization of $F_{x_0}$ follows from the version of the Weierstrass preparation theorem proved in ~\cite[Lemma 4.4.3]{ash-stevens}.  Then the proof of the proposition is nearly identical to that of ~\cite[Theorem 4.5.1]{ash-stevens}, up to replacing $p$ with $u$ and translating the numerical inequalities into rational localization conditions.
	\end{proof}

	Since spectral varieties are flat over weight space, we will be able to use the following result to show that slope factorizations exist:
	\begin{thm}[{\cite[Theorem A.1.2]{conrad2006}}]\label{thm: finiteness criterion}
		Let $f:X\rightarrow Y$ be a flat map of pseudorigid spaces.  Then $f$ is finite if and only it is quasi-compact and separated with finite fibers, and its fiber rank is locally constant on $Y$.
	\end{thm}
	\begin{remark}
		This result is stated in ~\cite{conrad2006} for classical rigid spaces, but the proof goes through unchanged for pseudorigid spaces.  The input from non-archimedean geometry is the theory of formal models (and flattening results) of ~\cite{bosch-lut}, ~\cite{bosch-lut2}; Although the authors had in mind applications to classical rigid analytic spaces, they worked in sufficient generality that their results hold in the more general pseudorigid context.  One uses this theory to reduce to the corresponding algebraic result of ~\cite[Lemma II.1.19]{deligne-rapoport}.
	\end{remark}

	We further observe that we have inclusions $\mathcal{D}_\kappa^r\subset\mathcal{D}_\kappa^{<r}\subset\mathcal{D}_\kappa^s$ for any $r_\kappa\leq s<r$.  Thus, the fact that $F_\kappa^r=F_\kappa^s$ implies that $\mathscr{M}_c^\ast = \oplus_i H_c^i(K,\mathscr{D}_\kappa^{<r})_{\leq h}$ for any $r>r_\kappa$.

	We may carry out the same procedure for the action of $U_p$ on $C_\ast^{\mathrm{BM}}(K,\mathscr{A}_\kappa)$, and obtain a coherent sheaf $\mathscr{M}_\ast^{\mathrm{BM}}=\oplus_iH_i^{\mathrm{BM}}(K,\mathscr{A}_\kappa)_{\leq h}$ on $\mathscr{Z}^{\prime}$.  Let $\mathbb{T}$ denote either $\mathbb{T}(\Delta^p,K^p)$ or $\mathbb{T}(\Delta,K)$.  Both $\mathscr{M}_c^\ast$ and $\mathscr{M}_\ast^{\mathrm{BM}}$ are Hecke modules, so we have constructed \emph{eigenvariety data} $(\mathscr{Z},\mathscr{M}_c^\ast,\mathbb{T},\psi)$ and $(\mathscr{Z}^\prime,\mathscr{M}_\ast^{\mathrm{BM}},\mathbb{T},\psi')$ (where $\psi:\mathbb{T}\rightarrow \End_{\O_\mathscr{Z}}(\mathscr{M}_c^\ast)$ and $\psi':\mathbb{T}\rightarrow \End_{\O_{\mathscr{Z}^\prime}}(\mathscr{M}_\ast^{\mathrm{BM}})$ give the Hecke-module structures).

	Finally, we may construct eigenvarieties from the eigenvariety data.  Let $\mathscr{T}$ and $\mathscr{T}^\prime$ denote the sheaves of $\O_{\mathscr{Z}}$-algebras generated by the images of $\psi$ and $\psi'$, respectively; in particular, if $\mathscr{Z}_{U,h}\subset\mathscr{Z}$ is an open affinoid corresponding to the slope datum $(U,h)$, then
	\[	\mathscr{T}(\mathscr{Z}_{U,h}) = \im\left(\O(\mathscr{Z}_{U,h})\otimes_{\Z_p}\mathbb{T}\rightarrow \End_{\O(\mathscr{Z}_{U,h})}\left(H_c^\ast(K,\mathscr{D}_{U}\right)_{\leq h}\right)=:\mathbb{T}_{U,h}	\]
		and
		\[	\mathscr{T}^\prime(\mathscr{Z}'_{U,h}) = \im\left(\O(\mathscr{Z}'_{U,h})\otimes_{\Z_p}\mathbb{T}\rightarrow \End_{\O(\mathscr{Z}'_{U,h})}\left(H_\ast^{\mathrm{BM}}(K,\mathscr{A}_{U}\right)_{\leq h}\right)=:\mathbb{T}_{U,h}'	\]
Then we set
\[	\mathscr{X}_{\G}^{\mathbb{T}}:=\underline\Spa \mathscr{T}	\]
and
\[	\mathscr{X}_{\G}^{\mathbb{T},\prime} :=\underline\Spa \mathscr{T}^\prime	\]
and we have finite morphisms $q:\mathscr{X}_{\G}\rightarrow\mathscr{Z}$ and $q':\mathscr{X}_{\G}^\prime\rightarrow \mathscr{Z}^\prime$, and $\Z_p$-algebra homomorphisms $\phi_{\mathscr{X}}:\mathbb{T}\rightarrow \O(\mathscr{X}_{\G}^{\mathbb{T}})$ and $\phi_{\mathscr{X}'}:\mathbb{T}\rightarrow \O(\mathscr{X}_{\G}^{\mathbb{T},\prime})$.  If the choice of Hecke operators is clear from context, we will drop $\mathbb{T}$ from the notation.

If $\mathbb{T}=\mathbb{T}(\Delta,K)$, then unlike ~\cite{johansson-newton}, we are adding the Hecke operators $U_{\varpi_v}$ at places $v\mid p$ to our Hecke algebras (and hence to the coordinate rings of our eigenvarieties), not just the controlling operator $U_p$.

\subsection{The middle-degree eigenvariety}

When $F=\Q$ and $\G=\H=\GL_2$, for any fixed slope $h$ such that $C_c^\bullet(K,\mathscr{D}_\kappa)$ has a slope-$\leq h$ decomposition, the complex $C_c^\bullet(K,\mathscr{D}_\kappa)_{\leq h}$ has cohomology only in degree $1$, and $H_c^1(K,\mathscr{D}_\kappa)_{\leq h}$ is projective.  As a result, the eigencurve is reduced and equidimensional, and classical points are very Zariski-dense.  For a general totally real field $F$, the situation is more complicated.  The complex $C_c^\bullet(K,\mathscr{D}_\kappa)_{\leq h}$ lives in degrees $[0,2d]$ and we are still primarily interested in the degree-$d$ cohomology; indeed, the discussion of ~\cite[\textsection 3.6]{harder1987} shows that cuspidal cohomological automorphic forms contribute only to middle degree cohomology in the classical finite-dimensional classical analogue.  However, there is no reason to expect the other cohomology groups to vanish.

Instead, following ~\cite{bergdall-hansen} we will sketch the construction of an open subspace $\mathscr{X}_{\GL_2/F,\mathrm{mid}}\subset \mathscr{X}_{\GL_2/F}$ where $H_c^i(K,\mathscr{D}_\kappa)$ vanishes for $i\neq d$; by ~\cite[Theorem B.0.1]{bergdall-hansen}, all classical points of $\mathscr{X}_{\GL_2/F}$ whose associated Galois representation have sufficiently large residual image lie in $\mathscr{X}_{\GL_2/F,\mathrm{mid}}$.  The cohomology and base change result ~\cite[Theorem 4.2.1]{johansson-newton} shows that the locus where $H_c^i(K,\mathscr{D}_\kappa)=0$ for $i\geq d+1$ is open, but we need to use the homology complexes $C_\bullet^{\mathrm{BM}}(K,\mathcal{A}_\kappa)$ to control $H_c^i(K,\mathscr{D}_\kappa)$ for $i\leq d-1$.

As in ~\cite{bergdall-hansen}, the key points are a base change result for Borel--Moore homology, and a universal coefficients theorem relating it to compactly supported cohomology:
\begin{prop}
	\begin{itemize}
		\item	There is a third-quadrant spectral sequence 
			\[	E_2^{i,j} = \Tor_{-i}^R(H_{-j}^{\mathrm{BM}}(K,\mathscr{A}_\kappa)_{\leq h},S)\Rightarrow H_{-i-j}^{\mathrm{BM}}(K,\mathscr{A}_{\kappa_S})_{\leq h}	\]
		\item	There is a second-quadrant spectral sequence 
			\[	E_2^{i,j}=\Ext_R^i(H_j^{\mathrm{BM}}(K,\mathscr{A}_\kappa)_{\leq h},R)\Rightarrow H_c^{i+j}(K,\mathscr{D}_\kappa)_{\leq h}	\]
	\end{itemize}
	These are spectral sequences of $\mathbb{T}(\Delta,K)$-modules.
\end{prop}
The proof uses both the fact that $\mathcal{D}_\kappa^{<r}$ is the continuous dual of $\mathcal{A}_\kappa^r$, and the fact that $H_c^i(K,\mathcal{D}_\kappa^{<r})_{\leq h}=H_c^i(K,\mathcal{D}_\kappa^r)_{\leq h}$ for all $r>r_\kappa$.

\begin{prop}
	If $(U,h)$ is a slope datum, then we have a natural commuting diagram
	\[
		\begin{tikzcd}
			\O(U)\otimes\mathbb{T}(\Delta,K) \ar[r]\ar[d] & \mathbb{T}_{U,h}^\prime \arrow[d,dashed]	\\
				\mathbb{T}_{U,h}' \ar[r] & \mathbb{T}_{U,h}^{\mathrm{red}}
		\end{tikzcd}
	\]
\end{prop}

Thus, we have a morphism $\tau:\mathscr{X}_{\GL_2/F}^{\red}\rightarrow \mathscr{X}_{\GL_2/F}^\prime$ and a closed immersion $i:\mathscr{X}_{\GL_2/F}^{\red}\hookrightarrow \mathscr{X}_{\GL_2/F}$.

\begin{definition}
	\[	\mathscr{X}_{\GL_2/F,\mathrm{mid}}:=\mathscr{X}_{\GL_2/F}\smallsetminus\left[\left(\cup_{j=d+1}^{2d}\supp(\mathscr{M}_c^j)\right)\cup\left(\cup_{j=0}^{d-1}\supp(i_\ast\tau^\ast\mathscr{M}_j^{\mathrm{BM}}\right)\right]	\]
	\label{def: middle deg eigen}
\end{definition}

By construction, a point $x\in\mathscr{X}_{\GL_2/F}$ of weight $\lambda_x$ lies in the Zariski-open subspace $\mathscr{X}_{\GL_2/F,\mathrm{mid}}\subset \mathscr{X}_{\GL_2/F}$ if and only if $H_c^j(K,k_x\otimes \mathscr{D}_{\lambda_x})_{\mathfrak{m}_x}=0$ for all $j\neq d$ (where $\mathfrak{m}_x$ is the maximal ideal of the Hecke algebra corresponding to $x$).
\begin{prop}
	\begin{enumerate}
		\item	The coherent sheaf $\mathscr{M}_c^d|_{\mathscr{X}_{\GL_2/F,\mathrm{mid}}}$ is flat over $\mathscr{W}$.
		\item	$\mathscr{X}_{\GL_2/F,\mathrm{mid}}$ is covered by open affinoids $W$ such that $W$ is a connected component of $(\pi\circ q)^{-1}(U)$, where $(U,h)$ is some slope datum, and $\mathscr{T}(W)$ acts faithfully on $\mathscr{M}_c^d(W)\cong e_WH_c^d(K,\mathscr{D}_\kappa)_{\leq h}$ (where $e_W$ is the idempotent projector restricting from $(\pi\circ q)^{-1}(U)$ to $W$).
	\end{enumerate}
\end{prop}
\begin{proof}
	This follows from the base change spectral sequence, and the criterion for flatness.
\end{proof}

\subsection{Jacquet--Langlands}

The classical Jacquet--Langlands correspondence lets us transfer automorphic forms between $\GL_2$ and quaternionic algebraic groups.  Over $\Q$, this correspondence was interpolated in ~\cite{chenevier2005} to give a closed immersion of eigencurves $\mathscr{X}_{D^\times/\Q}^\rig\hookrightarrow \mathscr{X}_{\GL_2/\Q}^\rig$; this interpolation was given for general totally real fields in ~\cite{birkbeck2019}.  We give the corresponding result for extended eigenvarieties.  However, as we have elected to work with the eigenvariety for $\GL_2/F$ constructed in ~\cite{johansson-newton} via overconvergent cohomology, instead of the eigenvariety constructed from Hilbert modular forms, we will never get an isomorphism of eigenvarieties, even when $[F:\Q]$ is even.

Let $D$ be a totally definite quaternion algebra over $F$, split at every place above $p$, and let $\mathfrak{d}_D$ be its discriminant.  For any ideal $\mathfrak{n}\subset\O_F$ with $(\mathfrak{d}_D,\mathfrak{n})=1$, we define the subgroup $K_1^{\underline D^\times}(\mathfrak{n})\subset (\O_D\otimes \widehat{\Z})^\times$ 
\[	K_1^{\underline D^\times}(\mathfrak{n}):=\left\{g\in (\O_D\otimes \widehat{\Z})^\times \mid g\equiv \left(\begin{smallmatrix}\ast & \ast \\ 0 & 1\end{smallmatrix}\right)\pmod{\mathfrak{n}}\right\}	\]
We may define a similar subgroup $K_1^{\GL_2/F}(\mathfrak{n})\subset \Res_{\O_F/\Z_p}\GL_2(\widehat\Z)$.

A \emph{classical algebraic weight} is a tuple $(k_\sigma)\in\Z_{\geq 2}^{\Sigma_\infty}$ together with a tuple $(v_\sigma)\in \Z^{\Sigma_\infty}$ such that $(k_\sigma)+(v_\sigma) = (r,\ldots,r)$ for some $r\in \Z$, where $\Sigma_\infty$ is the set of embeddings $F\hookrightarrow \R$.  Set $e_1:=(\frac{r+k_\sigma}{2})$ and $e_2:=(\frac{r-k_\sigma}{2})$, and define characters $\kappa_i:F^\times\rightarrow \R^\times$ for $i=1,2$ via
\[	\kappa_i(x) = \prod_{\sigma\in\Sigma_\infty}\sigma(x)^{e_{i,\sigma}}	\]
Then $(\kappa_1,\kappa_2)$ is a character on $\T(\Z)$ which is trivial on a finite-index subgroup of the center $Z_G(\Z)=\O_F^\times$.

Then we have the classical Jacquet--Langlands correspondence:
\begin{thm}
	Let $\kappa$ be a classical weight, and let $\mathfrak{n}\subset\O_F$ be an ideal such that $(\mathfrak{n},\mathfrak{d}_D)=1$.  There is a Hecke-equivariant isomorphism of spaces of cusp forms
	\[	S_\kappa^{\underline D^\times}(K_1^{\underline D^\times}(\mathfrak{n}))\xrightarrow\sim S_\kappa^{\mathfrak{d}_D-\mathrm{new}}(K_1^{\GL_2/F}(\mathfrak{n}\mathfrak{d}_D))	\]
\end{thm}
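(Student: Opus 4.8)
The plan is to reduce the statement to the Jacquet--Langlands correspondence for automorphic representations, and then to match spaces of invariant vectors place by place. Since $D$ is totally definite, $\underline D^\times(\R)$ is compact modulo its centre and the space of automorphic forms on $\underline D^\times(\A_F)$ decomposes discretely; fixing the archimedean component to be the irreducible algebraic representation $W_\kappa$ attached to $\kappa$ gives
\[
	S_\kappa^{\underline D^\times}(K_1^{\underline D^\times}(\mathfrak{n})) \cong \bigoplus_\pi (\pi_f)^{K_1^{\underline D^\times}(\mathfrak{n})},
\]
where $\pi$ runs over the finitely many automorphic representations of $\underline D^\times(\A_F)$ with $\pi_\infty\cong W_\kappa$ which do not factor through the reduced norm $\mathrm{Nrd}$, each occurring with multiplicity one. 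On the other side,
\[
	S_\kappa^{\GL_2/F}(K_1^{\GL_2/F}(\mathfrak{m})) \cong \bigoplus_{\pi'} (\pi'_f)^{K_1^{\GL_2/F}(\mathfrak{m})},
\]
where $\pi'$ runs over the cuspidal automorphic representations of $\GL_2(\A_F)$ whose archimedean component is the discrete series of weight $\kappa$; by newform theory the $\mathfrak{d}_D$-new subspace picks out exactly those $\pi'$ for which, at every $v\mid\mathfrak{d}_D$, the component $\pi'_v$ has conductor $\mathfrak{p}_v$ and unramified central character, i.e.\ is an unramified twist of the Steinberg representation.

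Next I invoke the classical Jacquet--Langlands correspondence (as recalled, for instance, in ~\cite{chenevier2005} and ~\cite{birkbeck2019}): $\pi\mapsto\mathrm{JL}(\pi)$ is a bijection from the set of automorphic representations of $\underline D^\times(\A_F)$ not factoring through $\mathrm{Nrd}$ onto the set of cuspidal automorphic representations of $\GL_2(\A_F)$ that are square-integrable at every place in the ramification set of $D$, which preserves the local component at every place where $D$ splits and which, at the archimedean places, matches $W_{\kappa_\sigma}$ with the discrete series of $\GL_2(\R)$ of the corresponding weight. In particular $\mathrm{JL}$ preserves the weight-$\kappa$ condition, and at each finite place $v\mid\mathfrak{d}_D$ it sends a representation of $D_v^\times$ to a square-integrable representation of $\GL_2(F_v)$, necessarily a twist of Steinberg.

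It then remains to compare spaces of invariants. At a finite place $v\nmid\mathfrak{d}_D$ (including the places above $p$) the fixed isomorphism $D_v\xrightarrow{\sim}\Mat_2(F_v)$ carries $K_1^{\underline D^\times}(\mathfrak{n})_v$ onto $K_1^{\GL_2/F}(\mathfrak{n}\mathfrak{d}_D)_v$ (as $v\nmid\mathfrak{d}_D$), so $(\pi_v)^{K_1^{\underline D^\times}(\mathfrak{n})_v} = (\mathrm{JL}(\pi)_v)^{K_1^{\GL_2/F}(\mathfrak{n}\mathfrak{d}_D)_v}$. At a place $v\mid\mathfrak{d}_D$ one has $K_1^{\underline D^\times}(\mathfrak{n})_v = \O_{D_v}^\times$, the unique maximal compact subgroup of $D_v^\times$; since $\O_{D_v}^\times$ is normal with $D_v^\times/\O_{D_v}^\times\cong\Z$, the representations $\pi_v$ with $(\pi_v)^{\O_{D_v}^\times}\neq 0$ are precisely those that are one-dimensional and trivial on $\O_{D_v}^\times$ (equivalently, of the form $\chi\circ\mathrm{Nrd}$ with $\chi$ unramified), for which the space of invariants is one-dimensional; their images under the local Jacquet--Langlands map are exactly the unramified twists of Steinberg, whose space of new vectors at level $K_1(\mathfrak{p}_v)$ is likewise one-dimensional, and --- the Steinberg twists being the only conductor-$\mathfrak{p}_v$ square-integrable representations --- nothing else in the $\mathfrak{d}_D$-new subspace arises this way. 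Hence $\dim(\pi_f)^{K_1^{\underline D^\times}(\mathfrak{n})} = \dim(\mathrm{JL}(\pi)_f)^{K_1^{\GL_2/F}(\mathfrak{n}\mathfrak{d}_D)}$, the right-hand side lying in the $\mathfrak{d}_D$-new subspace, and choosing compatible bases yields the desired isomorphism; it is equivariant for the Hecke operators away from $\mathfrak{d}_D$ because $\mathrm{JL}$ preserves every local component there.

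The main obstacle is the local analysis at the places dividing $\mathfrak{d}_D$: one must verify both that the $\O_{D_v}^\times$-invariants of a ramified representation of $D_v^\times$ vanish --- so that only the one-dimensional representations contribute on the quaternionic side --- and that newform theory for square-integrable representations of $\GL_2(F_v)$ matches this exactly. In particular, imposing the full group $\O_{D_v}^\times$ as level on the quaternionic side forces the central character to be unramified at $\mathfrak{d}_D$, which is precisely what rules out the conductor-$\mathfrak{p}_v$ ramified principal series representations that would otherwise enlarge the $\mathfrak{d}_D$-new subspace beyond the image of Jacquet--Langlands. Everything else is a formal translation of the representation-theoretic correspondence.
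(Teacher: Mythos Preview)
The paper does not prove this theorem: it is stated as the \emph{classical} Jacquet--Langlands correspondence and invoked as input, with the paper's own contribution being the interpolation to extended eigenvarieties via the machinery of \cite{johansson-newton17}. So there is no proof in the paper to compare against; your sketch is essentially the standard argument one finds in the literature (e.g.\ the references you cite).

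That said, there is one point in your argument that is not fully closed. At a place $v\mid\mathfrak{d}_D$ you correctly observe that the level $\O_{D_v}^\times$ on the quaternionic side forces the central character to be unramified, and you note that this ``rules out'' the conductor-$\mathfrak{p}_v$ ramified principal series on the $\GL_2$ side. But as written this only constrains the \emph{source}: with the $K_1$ level structure, the scalar matrices in $K_1^{\GL_2/F}(\mathfrak{n}\mathfrak{d}_D)_v$ are only those congruent to $1$ modulo $\mathfrak{p}_v$, so the target space can a priori contain forms whose central character has conductor $\mathfrak{p}_v$, and these would lie in the naive ``$\mathfrak{d}_D$-new'' subspace (defined as the orthogonal complement of oldforms) without being in the image of $\mathrm{JL}$. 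To make the isomorphism go through you need either to take ``$\mathfrak{d}_D$-new'' to mean that the local component at each $v\mid\mathfrak{d}_D$ is special (which is the usual convention when stating Jacquet--Langlands in this form), or to impose a matching central-character condition on both sides. Since the paper does not spell out the definition of the new subspace, this is ultimately a question of conventions, but your write-up should make explicit which one you are using and why it excludes the ramified principal series from the target.
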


We will interpolate this correspondence to a closed immersion $\mathscr{X}_{\underline D^\times}\hookrightarrow \mathscr{X}_{\GL_2/F}$, where the source has tame level $K_1^{\underline D^\times}(\mathfrak{n})$ and the target has tame level $K_1^{\GL_2/F}(\mathfrak{n})$.  We use the interpolation theorem of ~\cite{johansson-newton17}:
\begin{thm}[{\cite[Theorem 3.2.1]{johansson-newton17}}]
	Let $\mathfrak{D}_i=(\mathscr{Z}_i,\mathscr{M}_i,\mathbb{T}_i,\psi_i)$ for $i=1,2$ be eigenvariety data, with corresponding eigenvarieties $\mathscr{X}_i$, and suppose we have the following:
	\begin{itemize}
		\item	A morphism $j:\mathscr{Z}_1\rightarrow\mathscr{Z}_2$
		\item	A $\Z_p$-algebra homomorphism $\mathbb{T}_2\rightarrow\mathbb{T}_1$
		\item	A subset $\mathscr{X}^{\mathrm{cl}}\subset \mathscr{X}_1$ of maximal points such that the $\mathbb{T}_2$-eigensystem of $x$ appears in $\mathscr{M}_2(j(\pi_1(x)))$ for all $x\in\mathscr{X}^{\mathrm{cl}}$.
	\end{itemize}
	Let $\overline{\mathscr{X}}\subset \mathscr{X}_1$ denote the Zariski closure of $\mathscr{X}^{\mathrm{cl}}$ (with its underlying reduced structure).  Then there is a canonical morphism $i:\overline{\mathscr{X}}\rightarrow \mathscr{X}_2$ lying over $j$, such that $\phi_{\overline{\mathscr{X}}}\circ \sigma = i^\ast\circ\phi_{\mathscr{X}_2}$.  If $j$ is a closed immersion and $\sigma$ is a surjection, then $i$ is a closed immersion.
\end{thm}

We remark that in the presence of integral structures, we can make a sharper statement:
\begin{cor}
	\label{cor: interpolation integral}
With notation as above, suppose that the $\mathscr{Z}_i=\Spa R_i$ are affinoid, with $R_{i,0}\subset R_i$ rings of definition such that $j$ is induced by a morphism $\Spf R_{1,0}\rightarrow \Spf R_{2,0}$, and suppose that $M_i:=\Gamma(\mathscr{Z}_i,\mathscr{M}_i)$ admit $R_{i,0}$-lattices $M_{i,0}$ stable under the actions of $\mathbb{T}_i$.  Let $R_{i,0}':=\im\left(R_{i,0}\otimes\mathbb{T}_i\rightarrow \End_{R_{i,0}}(M_{i,0})\right)$ and let $\overline{\mathscr{X}}_0$ denote the closure of $\mathscr{X}^{\mathrm{cl}}$ in $\Spf R_{1,0}'$.  Then there is a morphism $j_0:\overline{\mathscr{X}}_0\rightarrow \Spf R_{2,0}'$.
\end{cor}
\begin{proof}
	As in the proof of~\cite[Theorem 3.2.1]{johansson-newton17}, one reduces to the case where $R_0:=R_{1,0}=R_{2,0}$ and $\mathbb{T}:=\mathbb{T}_1=\mathbb{T}_2$, and one considers the actions of $\mathbb{T}_1\oplus \mathbb{T}_2$ on $M_{1,0}\oplus M_{2,0}$.  Then we have quotients 
	\[	R_{3,0}:=\im\left(R_0\otimes\mathbb{T}\rightarrow \End_{R_0}(M_{1,0}\oplus M_{2,0})\right)\twoheadrightarrow R_{i,0}'	\]
	Since $\overline{\mathscr{X}}\subset \mathscr{X}_2$ and $\Spf R_{3,0}$ is separated, we have $\overline{\mathscr{X}}_0\subset \Spf R_{2,0}'$, as desired.
\end{proof}

We take $\mathscr{Z}_1=\mathscr{Z}_2=\mathscr{W}_F\times\G_m$.  In order to define $\mathbb{T}=\mathbb{T}_1=\mathbb{T}_2$, we set
\[	\Delta_{v} = \begin{cases}
			\GL_2(F_v) &\mbox{ if } v\nmid p\mathfrak{d}_D\mathfrak{n}	\\
			{K_1^{\underline D^\times}(\mathfrak{n})}_v &\mbox{ if } v\mid \mathfrak{d}_D\mathfrak{n}
	\end{cases}	\]
For $v\mid p$, we take $\Delta_{v}$ as in \textsection\ref{subsection: eigenvariety definitions}.
In other words, $\mathbb{T}$ is the commutative $\Z_p$-algebra generated by $T_v:=[K_v\left( \begin{smallmatrix}1 & \\ & \varpi_v\end{smallmatrix} \right)K_v]$ and $S_v:=[K_v\left( \begin{smallmatrix}\varpi_v & \\ & \varpi_v\end{smallmatrix} \right)K_v]$ for $v\nmid p\mathfrak{d}_D\mathfrak{n}$ and $U_{\varpi_v}$ for $v\mid p$.

However, we cannot immediately combine this interpolation theorem with the Jacquet--Langlands correspondence, because our choice of weight space means that classical weights may not be Zariski dense unless Leopoldt's conjecture is true.  More precisely, given a classical algebraic weight, we constructed a character on $\T(\Z)$ trivial on a finite-index subgroup of $\O_F^\times$, and conversely, characters on $\T(\Z)$ trivial on a finite-index subgroup of $\O_F^\times$ yield classical algebraic weights.  This equivalence relies on Dirichlet's unit theorem.  

This means that there are two natural definitions of $p$-adic families of weights, $\mathscr{W}_F'=\Spa \Z_p[\![(\Res_{\O_F/\Z_p}\G_m)\times \Z_p^\times]\!]^{\an}$ interpolating classical algebraic weights, and $\mathscr{W}_F$ interpolating characters on $T_0$, and the equivalence of those two definitions depends on Leopoldt's conjecture.

Fortunately, the gap between these weight spaces can be controlled: there is a closed embedding $\mathscr{W}_F'\hookrightarrow \mathscr{W}_F$, and the twisting action by characters on $\O_{F,p}^\times/\overline{\O_F^{\times,+}}$ defines a surjective map
	\[	\widehat{\O_{F,p}^\times/\overline{\O_F^{\times,+}}}\times\mathscr{W}_F'\rightarrow \mathscr{W}_F^{\rig}	\]
We say that a weight $\lambda\in \mathscr{W}_F^{\rig}(\overline\Q_p)$ is \emph{twist classical} if it is in the $\widehat{\O_{F,p}^\times/\overline{\O_F^{\times,+}}}(\overline\Q_p)$-orbit of a classical weight.  Then twist classical weights are very Zariski dense in $\mathscr{W}_F$.

In addition, we may define a twisting action on Hecke modules, as in ~\cite{bergdall-hansen}.  Let $\Gal_{F,p}$ denote the Galois group of the maximal abelian extension of $F$ unramified away from $p$ and $\infty$, and let $\eta:\Gal_{F,p}\rightarrow \overline\Q_p^\times$ be a continuous character.  Global class field theory implies that $\Gal_{F,p}$ fits into an exact sequence
	\[	1\rightarrow \O_{F,p}^\times/\overline{\O_F^{\times,+}}\rightarrow \Gal_{F,p}\rightarrow \mathrm{Cl}_F^+\rightarrow 1	\]
where $\mathrm{Cl}_F^+$ is the narrow class group of $F$ (and hence finite).  Suppose $M$ is an $R$-module equipped with an $R$-linear left $\Delta_p$-action.  Then we may define a new left $\Delta_p$-module $M(\eta):=M\otimes\eta^{-1}|_{\O_{F,p}^\times}$, where the action of $g\in\Delta_p$ is given by 
	\[	g\cdot m = \left(\eta^{-1}|_{\O_{F,p}^\times}(\det g \cdot p^{-\sum_{v\mid p}v(\det g)})\right)\cdot (g\cdot m)	\]
In particular, $\mathscr{D}_\kappa(\eta)\cong \mathscr{D}_{\eta^{-1}\cdot\kappa}$ by ~\cite[Lemma 5.5.2]{bergdall-hansen}, and there is an isomorphism
	\[	\mathrm{tw}_{\eta}:H_c^\ast(K,\mathscr{D}_\kappa)\xrightarrow\sim H_c^\ast(K,\mathscr{D}_{\eta^{-1}\cdot\kappa}	\]

Suppose $x\in\mathscr{X}_{\underline D^\times}(\overline\Q_p)$ is a point with $\mathrm{wt}(x)=:\lambda$, corresponding to the system of Hecke eigenvalues $\psi_x:\mathbb{T}\rightarrow \overline\Q_p$.  Then we define a new system of Hecke eigenvalues, via
\[	\mathrm{tw}_\eta(\psi_x)(T) = \begin{cases}
		\eta(\varpi_v)\psi_x(T) & \mbox{ if } v\nmid p\mathfrak{d}_D\mathfrak{n} \mbox{ and } T=T_v	\\
		\eta(\varpi_v)^2\psi_x(T) & \mbox{ if } v\nmid p\mathfrak{d}_D\mathfrak{n} \mbox{ and } T=S_v	\\
		\eta(\varpi_v)\psi_x(T) & \mbox{ if } v\mid p
\end{cases}	\]
Then it follows from ~\cite[Proposition 5.5.5]{bergdall-hansen} that $\mathrm{tw}_\eta(\psi_x)$ corresponds to a point $\mathrm{tw}_\eta(x)\in\mathscr{X}_{\underline D^\times}$ of weight $\eta^{-1}|_{\O_{F,p}^\times}\cdot\kappa$.

We say that a point $x\in \mathscr{X}_{\underline D^\times}(\overline\Q_p)$ is \emph{twist classical} if it is in the $\widehat{\Gal_{F,p}}(\overline\Q_p)$-orbit of a point corresponding to a classical system of Hecke eigenvalues.

\begin{prop}\label{prop: twist classical dense}
	Twist classical points are very Zariski dense in $\mathscr{X}_{\underline D^\times}$.
\end{prop}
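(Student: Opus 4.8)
The plan is to reduce the statement to the density of classical points on the affinoid building blocks of $\mathscr{X}_{\underline D^\times}$ over weight space, by combining a small-slope classicality criterion with the twisting action. Since $D$ is totally definite, the locally symmetric spaces for $\underline D^\times$ are finite, so the complex $C_c^\bullet(K,\mathscr{D}_\kappa)$ is concentrated in degree $0$; hence for every slope datum $(U,h)$ the module $M_{U,h}:=H_c^0(K,\mathscr{D}_U)_{\le h}$ is a finite projective $\O(U)$-module, on which the Hecke algebra acts through a finite $\O(U)$-algebra with $\underline\Spa$ equal to $q^{-1}(\mathscr{Z}_{U,h})\subset\mathscr{X}_{\underline D^\times}$; the induced map $q^{-1}(\mathscr{Z}_{U,h})\to U$ is finite, and by projectivity of $M_{U,h}$ every irreducible component of the source dominates one of $U$. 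As $(U,h)$ varies, the spaces $q^{-1}(\mathscr{Z}_{U,h})$ cover $\mathscr{X}_{\underline D^\times}$ and form neighbourhood bases at all its points, so it suffices to show that the twist classical points are Zariski dense in each $q^{-1}(\mathscr{Z}_{U,h})$.

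First I would record the classicality input: for a classical algebraic weight $\lambda$ with underlying tuple $(k_\sigma)\in\Z_{\ge 2}^{\Sigma_\infty}$ lying in $U$, the classical automorphic forms $S_\lambda^{\underline D^\times}(K)$ embed Hecke-equivariantly into $M_{U,h}\otimes_{\O(U),\lambda}\overline{\Q}_p$ and exhaust its slope-$<c(\lambda)$ part, where $c(\lambda)$ is an explicit bound with $c(\lambda)\to\infty$ as $\inf_\sigma k_\sigma\to\infty$; this is the usual ``small slope implies classical'' comparison of $\mathscr{D}_\lambda$ with its locally algebraic quotient, valid in the pseudorigid setting of~\cite{johansson-newton}. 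Thus any $x\in\mathscr{X}_{\underline D^\times}$ with $\mathrm{wt}(x)=\lambda$ classical and $U_p$-slope $<c(\lambda)$ corresponds to a classical Hecke eigensystem. Next I would bring in the operators $\mathrm{tw}_\eta$, $\eta\in\widehat{\Gal_{F,p}}$: they assemble into a twisting action on $\mathscr{X}_{\underline D^\times}$ carrying a point of weight $\kappa$ to a point of weight $\eta^{-1}|_{\O_{F,p}^\times}\cdot\kappa$, and since $\Gal_{F,p}$ is compact every continuous $\eta$ has values in the units, so each $\eta(\varpi_v)$ is a $p$-adic unit and $\mathrm{tw}_\eta$ preserves $U_p$-slopes. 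Consequently a point $x$ is twist classical whenever some twist $\mathrm{tw}_\eta(x)$ has classical weight $\lambda$ with $\mathrm{slope}(x)=\mathrm{slope}(\mathrm{tw}_\eta(x))<c(\lambda)$.

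It then remains to produce enough such weights. For any fixed $h$, the classical algebraic weights $\lambda$ with $c(\lambda)>h$ are again very Zariski dense in $\mathscr{W}_F'$ — one perturbs $(k_\sigma)\rightsquigarrow(k_\sigma+p^Na_\sigma)$ with $N$ and the $a_\sigma$ large, which stays $p$-adically close while forcing $c(\lambda)>h$ — so the same argument that establishes the very Zariski density of twist classical weights in $\mathscr{W}_F$ shows that their depth-$>h$ subset $W_h^{\mathrm{tw}}$ (the set of $\widehat{\O_{F,p}^\times/\overline{\O_F^{\times,+}}}$-twists of such $\lambda$) is very Zariski dense in $\mathscr{W}_F$; in particular $W_h^{\mathrm{tw}}$ accumulates near the characteristic $p$ locus, which is how that part of $\mathscr{X}_{\underline D^\times}$ gets covered. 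Now take $x_0\in\mathscr{X}_{\underline D^\times}$ and a slope datum $(U,h)$ with $\mathrm{wt}(x_0)\in U$ and $x_0\in q^{-1}(\mathscr{Z}_{U,h})$, with $U$ small enough that $W_h^{\mathrm{tw}}\cap U$ is Zariski dense in $U$ (possible by the very Zariski density when $x_0$ is twist classical, and automatic for arbitrary $x_0$). By the previous step every point of $q^{-1}(\mathscr{Z}_{U,h})$ lying over $W_h^{\mathrm{tw}}\cap U$ is twist classical, and since $q^{-1}(\mathscr{Z}_{U,h})\to U$ is finite with every component dominating a component of $U$, the preimage of the Zariski dense set $W_h^{\mathrm{tw}}\cap U$ is Zariski dense in $q^{-1}(\mathscr{Z}_{U,h})$. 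Letting $(U,h)$ run over a cover gives Zariski density of twist classical points in $\mathscr{X}_{\underline D^\times}$, and at a twist classical $x_0$ the pieces $q^{-1}(\mathscr{Z}_{U,h})$ form a neighbourhood basis, which yields the accumulation required for ``very'' Zariski density.

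The main obstacle is the passage between the two weight spaces: because of the Leopoldt defect $\mathscr{W}_F'$ is a proper closed subspace of $\mathscr{W}_F^{\rig}$, so a single twist does not suffice and one has to move genuinely in the $\widehat{\O_{F,p}^\times/\overline{\O_F^{\times,+}}}$-direction; establishing that the resulting (deep) twist classical weights are Zariski dense and accumulating inside every affinoid of $\mathscr{W}_F$ — which rests on the surjectivity and openness of the twisting map and a careful accumulation argument in the characteristic $p$ boundary — is the technical heart of this part, and is exactly the content I have taken as given above. A secondary point is to have the quantitative small-slope classicality statement available for the pseudorigid families $\mathscr{D}_\kappa$, although it is only ever applied over characteristic $0$ classical weights, where it reduces to the standard overconvergent-to-classical comparison.
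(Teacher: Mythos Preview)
Your proposal is correct and follows essentially the same route as the paper's proof: both use that $D$ is totally definite to reduce to degree $0$ and obtain projectivity of $H_c^0(K,\mathscr{D}_U)_{\le h}$ over $\O(U)$, deduce that every irreducible component of the local piece of the eigenvariety surjects onto one of $U$ (the paper cites \cite[Lemme~6.2.10]{chenevier2004}), produce a Zariski-dense set of twist classical weights in $U$ by combining Birkbeck's small-slope classicality over $\mathscr{W}_F'$ with the $\widehat{\O_{F,p}^\times/\overline{\O_F^{\times,+}}}$-twisting action (the paper cites \cite[Lemma~6.3.1]{bergdall-hansen}), and then propagate density upward via \cite[Lemme~6.2.8]{chenevier2004}. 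Your explicit observation that $\Gal_{F,p}$ is compact, so twisting preserves $U_p$-slopes, is exactly what makes the last step go through, and is used implicitly in the cited references.
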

\begin{proof}
Recall that $\mathscr{X}_{\underline D^\times}$ admits a cover by affinoid pseudorigid spaces of the form $\Spa \mathscr{T}(\mathscr{Z}_{U,h})$, where $\pi:\mathscr{Z}_{U,h}\rightarrow U$ is finite of constant degree, and 
	\[	\mathscr{T}(\mathscr{Z}_{U,h}) = \im\left(\O(\mathscr{Z}_{U,h})\otimes_{\Z_p}\mathbb{T}^p\rightarrow \End_{\O(\mathscr{Z}_{U,h})}(H_c^\ast(K,\mathscr{D}_U)_{\leq h}\right)	\]
We write $U=\Spa R$ for some pseudoaffinoid algebra $R$ over $\Z_p$.  We will show that $\Spec\mathscr{T}(\mathscr{Z}_{U,h})\rightarrow \Spec R$ carries irreducible components surjectively onto irreducible components, and we will construct a Zariski dense set of maximal points $W_{U,h}^{\mathrm{tw-cl}}\subset U$ such that the points of $\mathrm{wt}^{-1}(W_{U,h}^{\mathrm{tw-cl}})$ are twist classical.  By ~\cite[Lemme 6.2.8]{chenevier2004}, this implies the desired result.
	
To see that irreducible components of $\Spec\mathscr{T}(\mathscr{Z}_{U,h})$ map surjectively onto irreducible components of $\Spec R$, we observe that $D$ is totally definite, so the associated Shimura manifold is a finite set of points and $H_c^\ast(K,\mathscr{D}_U)$ vanishes outside degree $0$.  The base change spectral sequence of ~\cite[Theorem 4.2.1]{johansson-newton} implies that the formation of $H^0(K,\mathscr{D}_U)_{\leq h}$ commutes with arbitrary base change on $U$, which implies that $H^0(K,\mathscr{D}_U)_{\leq h}$ is flat.  Then ~\cite[Lemme 6.2.10]{chenevier2004} implies that $\Spec\mathscr{T}(\mathscr{Z}_{U,h})\rightarrow \Spec R$ carries irreducible components surjectively onto irreducible components, as desired.

Thus, it remains to construct $W_{U,h}^{\mathrm{tw-cl}}$.  Birkbeck proved a ``small slope implies classical'' result ~\cite[Theorem 4.3.7]{birkbeck2019}, and constructed a set $W_{U,h}^{\mathrm{cl}}$ Zariski dense in $U\cap \mathscr{W}_F'$ such that the points of $\mathrm{wt}^{-1}(W_{U,h}^{\mathrm{cl}})$ are classical (see the proof of ~\cite[Theorem 6.1.9]{birkbeck2019}).  Setting $W_{U,h}^{\mathrm{tw-cl}}$ to be the $\widehat{\O_{F,p}^\times/\overline{\O_F^{\times,+}}}(\overline\Q_p)$-orbit of  $W_{U,h}^{\mathrm{cl}}$, ~\cite[Lemma 6.3.1]{bergdall-hansen} implies that points of $\mathrm{wt}^{-1}(W_{U,h}^{\mathrm{tw-cl}})$ are twist classical, and we are done.
\end{proof}

As a corollary, we deduce that $\mathscr{X}_{\underline D^\times}$ has no components supported entirely in characteristic $p$:
\begin{cor}\label{cor: quat cpts in char p}
	$\mathscr{X}_{\underline D^\times}^{\rig}$ is Zariski dense in $\mathscr{X}_{\underline D^\times}$.
\end{cor}

We may use similar arguments to show that $\mathscr{X}_{\underline D^\times}$ is reduced:
\begin{prop}\label{prop: XD reduced}
	The eigenvariety $\mathscr{X}_{\underline D^\times}$ is reduced.
	\label{prop: quat eigenvariety reduced}
\end{prop}
\begin{proof}
	We first show that $\mathscr{X}_{\underline D^\times}^{\rig}$ is reduced.  By ~\cite[Proposition 6.1.2]{johansson-newton} (which adapts ~\cite[Proposition 3.9]{chenevier2005}), it is enough to find a Zariski dense set of twist classical weights $W_{U,h}^{\ssimple}\subset U\subset \mathscr{W}_F^{\rig}$ for each slope datum $(U,h)$ such that $\mathscr{M}(\mathscr{Z}_{U,h})_\kappa$ is a semi-simple Hecke module for all $\kappa\in W_{U,h}^{\ssimple}$.  Birkbeck ~\cite[Lemma 6.1.12]{birkbeck2019} constructed sets $W_{U,h}^{\prime,\ssimple}$ Zariski dense in $U\cap \mathscr{W}_F^{\prime,\rig}$ with this property, and we will again use twisting by $p$-adic characters to construct $W_{U,h}^{\ssimple}$.

	If $\eta:\O_{F,p}^\times/\overline{\O_F^{\times,+}}\rightarrow \overline\Q_p^\times$ is a character, we have an isomorphism 
	\[	\mathrm{tw}_\eta:H_c^\ast(K,\mathscr{D}_\kappa)\xrightarrow\sim H_c^\ast(K,\mathscr{D}_{\eta^{-1}\cdot\kappa}	\]
	By ~\cite[Proposition 5.5.5]{bergdall-hansen}, $\mathrm{tw}_\eta$ is Hecke-equivariant up to scalars, so $\mathscr{M}(\mathscr{Z}_{U,h})_{\kappa}$ is a semi-simple Hecke module if and only if $\mathscr{M}(\mathscr{Z}_{\eta^{-1}\cdot U,h})_{\eta^{-1}\cdot\kappa}$ is.  Thus, we may take $W_{U,h}^{\ssimple}$ to be the $\widehat{\O_{F,p}^\times/\overline{\O_F^{\times,+}}}(\overline\Q_p)$-orbit of $\cup_{U'}W_{U',h}^{\prime,\ssimple}$, as $(U',h)$ varies through slope data, and we see that $\mathscr{X}_{\underline D^\times}^{\rig}$ is reduced.

	Now let $X\subset \mathscr{X}_{\underline D^\times}$ be an open affinoid subspace, and let $\{X_i\}$ be an open affinoid cover of the rigid analytic locus $X^{\rig}\subset X$.  Since $X\smallsetminus X^{\rig}$ contains no open subset of $X$, the natural map
	\[	\O(X)\rightarrow \prod_i\O(X_i)	\]
	is injective.  Each $\O(X_i)$ is reduced, so $\O(X)$ is, as well.
\end{proof}

Now the Jacquet--Langlands correspondence for eigenvarieties follows immediately:
\begin{cor}
	There is a closed immersion $\mathscr{X}_{\underline D^\times}\hookrightarrow \mathscr{X}_{\GL_2/F}$ interpolating the classical Jacquet--Langlands correspondence on (twist) classical points, where the source has tame level $K_1^{\underline D^\times}(\mathfrak{n})$ and the target has tame level $K_1^{\GL_2/F}(\mathfrak{n})$.
\end{cor}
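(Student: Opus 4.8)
The plan is to apply the interpolation theorem \cite[Theorem 3.2.1]{johansson-newton17} to the eigenvariety data for $\underline D^\times$ and for $\GL_2/F$, taking $\mathscr{Z}_1 = \mathscr{Z}_2 = \mathscr{W}_F\times\Gm$, $\mathbb{T}_1 = \mathbb{T}_2 = \mathbb{T}$ the common Hecke algebra introduced above (generated by $T_v,S_v$ for $v\nmid p\mathfrak{d}_D\mathfrak{n}$ and by $U_v$ for $v\mid p$), and $\mathscr{M}_1,\mathscr{M}_2$ the coherent sheaves $\mathscr{M}_c^\ast$ attached respectively to $\underline D^\times$ (which, as $D$ is totally definite, is concentrated in degree $0$) and to $\GL_2/F$. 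As the morphism $j$ and the algebra map $\sigma$ we take the identities of $\mathscr{W}_F\times\Gm$ and of $\mathbb{T}$; then $j$ is a closed immersion and $\sigma$ is surjective, so once the hypotheses of the theorem are verified the resulting morphism is automatically a closed immersion.

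First I would fix the dense set of classical points: take $\mathscr{X}^{\mathrm{cl}}\subset\mathscr{X}_{\underline D^\times}$ to be the set of twist classical points, which is very Zariski dense by Proposition~\ref{prop: twist classical dense}. Since $\mathscr{X}_{\underline D^\times}$ is reduced by Proposition~\ref{thm: quat eigenvariety reduced}, the Zariski closure $\overline{\mathscr{X}}$ of $\mathscr{X}^{\mathrm{cl}}$, endowed with its reduced structure, is all of $\mathscr{X}_{\underline D^\times}$. It then remains to check the key hypothesis of \cite[Theorem 3.2.1]{johansson-newton17}: for each $x\in\mathscr{X}^{\mathrm{cl}}$ the $\mathbb{T}$-eigensystem $\psi_x$ appears in the $\GL_2/F$-module $\mathscr{M}_2(j(\q_1(x)))$, i.e.\ in $H_c^d(K^{\GL_2/F},\mathscr{D}_{\kappa_x})_{\le h}$ at the weight $\kappa_x$ and $U_p$-eigenvalue of $x$. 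When $x$ corresponds to a classical cuspidal Hecke eigensystem on $\underline D^\times$ of weight $\kappa$ and finite slope, the classical Jacquet--Langlands correspondence transfers it to a classical cuspidal Hecke eigensystem on $\GL_2/F$ agreeing with $\psi_x$ on all $T_v,S_v$ with $v\nmid p\mathfrak{d}_D\mathfrak{n}$ and on all $U_v$ with $v\mid p$ (on the $p$-components the correspondence is the identity, since $D$ is split at $p$), and this eigensystem occurs in the middle-degree compactly supported cohomology $H_c^d(K^{\GL_2/F},\mathscr{D}_\kappa)_{\le h}$ because classical cusp forms embed into overconvergent cohomology and a finite-slope cuspidal eigensystem is carried into the relevant slope-bounded part. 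For a general twist classical $x = \mathrm{tw}_\eta(x_0)$ with $x_0$ classical, one uses that the twisting operators $\mathrm{tw}_\eta$ are defined by the same recipe on the Hecke modules for $\underline D^\times$ and for $\GL_2/F$ and are Hecke-equivariant up to the explicit scalars of \cite[Proposition 5.5.5]{bergdall-hansen}; hence $\mathrm{tw}_\eta$ carries the $\GL_2/F$-eigensystem attached to $x_0$ to the one predicted for $x$, reducing this case to the classical one.

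Granting these inputs, \cite[Theorem 3.2.1]{johansson-newton17} produces a canonical morphism $i\colon\mathscr{X}_{\underline D^\times} = \overline{\mathscr{X}}\to\mathscr{X}_{\GL_2/F}$ over $j$ with $\phi_{\overline{\mathscr{X}}}\circ\sigma = i^\ast\circ\phi_{\mathscr{X}_{\GL_2/F}}$, and since $j$ is a closed immersion and $\sigma$ a surjection, $i$ is a closed immersion; the displayed identity, unwound, says precisely that $i$ matches Hecke eigensystems, so that $i$ interpolates the classical Jacquet--Langlands correspondence on (twist) classical points. The main obstacle will be the verification in the second paragraph: one must track carefully how the Hecke operators $U_v$ at $v\mid p$ and the level structures at the primes dividing $\mathfrak{d}_D$ behave under the classical correspondence, confirm that classical cusp forms on $\GL_2/F$ contribute their eigensystems to degree-$d$ compactly supported cohomology within the expected slope bound, and check the compatibility of the two twisting constructions; everything after that is formal, given Propositions~\ref{prop: twist classical dense} and~\ref{thm: quat eigenvariety reduced}.
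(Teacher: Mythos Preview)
Your proposal is correct and follows exactly the approach the paper intends: the paper simply declares that the corollary ``follows immediately'' from the interpolation theorem \cite[Theorem 3.2.1]{johansson-newton17} together with Propositions~\ref{prop: twist classical dense} and~\ref{thm: quat eigenvariety reduced}, and you have spelled out precisely that argument (taking $j$ and $\sigma$ to be identities, $\mathscr{X}^{\mathrm{cl}}$ the twist classical points, and invoking reducedness so that $\overline{\mathscr{X}}=\mathscr{X}_{\underline D^\times}$). Your verification of the eigensystem hypothesis via classical Jacquet--Langlands plus the twisting compatibility of \cite[Proposition 5.5.5]{bergdall-hansen} is the expected one, and indeed more detailed than anything the paper records.
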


In particular, if $[F:\Q]$ is even, we can find $D$ split at all finite places and ramified at all infinite places.  Then we may take in particular $\mathfrak{n}=\O_F$ to obtain a morphism of eigenvarieties of tame level $1$.

\subsection{Cyclic base change}\label{section: cyclic base change}

Fix an integer $N\in\N$, and let $S$ be a finite set of primes containing every prime dividing $pN$.  For any number field $F$, we again let $K_F^p\subset\GL_2(\A_F)$ be the compact open subgroup given by
\[	K_F^p:=\left\{g\in\GL_2(\A_F)\mid g\equiv\left(\begin{smallmatrix}\ast & \ast \\ 0 & 1\end{smallmatrix}\right)\pmod N\right\}	\]
and we let $K_F:=K_F^pI$.  We also define the Hecke algebra
\[	\mathbb{T}_F^S:=\mathbb{T}_{\GL_2/F}^S:=\otimes_{v\notin S}\mathbb{T}(\GL_2(F_v),\GL_2(\O_{F_v}))	\]

There is a homomorphism $\sigma_F^S:\mathbb{T}_F^S\rightarrow \mathbb{T}_{\Q}^S$ induced by unramified local Langlands and restriction of Weil representations from $W_F$ to $W_\Q$.

Similarly, there is a morphism of weight spaces $\mathscr{W}_{\Q,0}\hookrightarrow\mathscr{W}_\Q\rightarrow\mathscr{W}_F$ induced by the norm map $T_{F,0}\rightarrow T_{\Q,0}$.

In the special case where $F/\Q$ is cyclic, the classical base change map produces cuspidal automorphic representations of $\GL_2(\A_F)$ from certain cuspidal automorphic representations of $\GL_2(\A_{\Q})$, and ~\cite{johansson-newton17} interpolated it to a morphism of eigenvarieties:
\begin{thm}[{\cite[Theorem 4.3.1]{johansson-newton17}}]\footnote{The authors only construct the morphism when $N\geq 5$, to maintain their running assumption that the level is actually neat (as opposed to containing an open neat subgroup with index prime to $p$).  However, the argument is identical for small $N$.}
	There is a finite morphism 
	\[	\mathscr{X}_{\GL_2/\Q,\mathrm{cusp},F\mathrm{ - ncm}}^{S}\rightarrow \mathscr{X}_{\GL_2/F}^S	\]
	lying over $\mathscr{W}_{\Q}\rightarrow \mathscr{W}_F$ and compatible with the homomorphism $\sigma_F^S$.
\end{thm}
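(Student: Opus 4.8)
The plan is to apply the eigenvariety interpolation theorem of~\cite[Theorem 3.2.1]{johansson-newton17} recalled above, feeding it the classical cyclic base change of Langlands--Saito for $\GL_2$ at the dense set of classical cuspidal points. First I would fix the eigenvariety data. On the target side take $\mathfrak{D}_2=(\mathscr{Z}_F,\mathscr{M}_{c,F}^\ast,\mathbb{T}_F^S,\psi_F)$ for $\GL_2/F$ at the level $K_F$ above, with $\mathscr{Z}_F\subseteq\A^1_{\mathscr{W}_F}$ the spectral variety of the controlling operator $U_p^{(F)}=\prod_{v\mid p}U_v$. On the source side, start from the eigenvariety data for $\GL_2/\Q$ at level $K_\Q$ and restrict it to the union $\mathscr{Z}_1\subseteq\mathscr{Z}_\Q$ of those irreducible components over which the classical cuspidal points that are \emph{not} induced from the quadratic subfield of $F$ are Zariski dense (when $[F:\Q]$ is odd there is no such exclusion, since cyclic base change is then automatically cuspidal). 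With $\mathscr{M}_1:=\mathscr{M}_{c,\Q}^\ast|_{\mathscr{Z}_1}$ and $\mathbb{T}_1:=\mathbb{T}_\Q^S$, the associated eigenvariety $\mathscr{X}_1$ is, by construction, $\mathscr{X}_{\GL_2/\Q,\mathrm{cusp},F\mathrm{-ncm}}^S$.

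Next I would construct the morphism of spectral varieties $j:\mathscr{Z}_1\to\mathscr{Z}_F$ as the composite of (i) the structure map $\mathscr{Z}_1\to\mathscr{W}_\Q$; (ii) the map $\mathscr{W}_\Q\to\mathscr{W}_F$ of the statement, which is a closed immersion because it is dual to the norm $T_{F,0}\to T_{\Q,0}$ and this norm is surjective (as $p$ splits completely in $F$); and (iii) on the $\A^1$-coordinate, the $[F:\Q]$-power map, reflecting the classical identity that an eigenform with $U_p$-eigenvalue $\alpha$ base-changes to an eigenform with $U_p^{(F)}$-eigenvalue $\alpha^{[F:\Q]}$, each place of $F$ above $p$ being isomorphic to $\Q_p$. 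A priori $j$ is only a morphism $\mathscr{Z}_1\to\A^1_{\mathscr{W}_F}$; to see that it factors through the Zariski-closed subspace $\mathscr{Z}_F$, observe that classical cuspidal non-$F$-CM points are Zariski dense in $\mathscr{Z}_1$ (density of classical, equivalently twist-classical, points in the extended eigenvariety, the $\GL_2/\Q$-analogue of Proposition~\ref{prop: twist classical dense}), and at such a point the base-changed form is a classical cuspidal form on $\GL_2/F$ of parallel weight and finite slope, so its $U_p^{(F)}$-eigenvalue lies on $\mathscr{Z}_F$; since $\mathscr{Z}_F$ is Zariski closed, all of $\mathscr{Z}_1$ maps into it. Being the composite of a closed immersion with the finite $[F:\Q]$-power map of affine lines, $j$ is finite.

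Now I would feed this into the interpolation theorem. Take $\sigma:=\sigma_F^S:\mathbb{T}_F^S\to\mathbb{T}_\Q^S$ for the required $\Z_p$-algebra homomorphism, and let $\mathscr{X}^{\mathrm{cl}}\subseteq\mathscr{X}_1$ be the set of maximal points of classical weight attached to cuspidal, non-$F$-CM eigenforms. For $x\in\mathscr{X}^{\mathrm{cl}}$ attached to $f$, classical cyclic base change produces a cuspidal automorphic representation of $\GL_2(\A_F)$, unramified outside $S$, of the parallel weight corresponding to $\mathrm{wt}(x)$, whose Satake parameters away from $S$ are obtained from those of $f$ via $\sigma_F^S$, and whose finite-slope $p$-stabilization contributes to $H_c^\ast(K_F,\mathscr{D}_{\kappa})_{\le h}$ for suitable $\kappa$ and $h$; hence the $\mathbb{T}_F^S$-eigensystem $\sigma_F^S\circ\psi_x$ occurs in $\mathscr{M}_{c,F}^\ast(j(q_1(x)))$. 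Since $\mathscr{X}^{\mathrm{cl}}$ is Zariski dense in $\mathscr{X}_1$ by construction, the interpolation theorem produces a canonical morphism $i:\mathscr{X}_{\GL_2/\Q,\mathrm{cusp},F\mathrm{-ncm}}^S\to\mathscr{X}_{\GL_2/F}^S$ lying over $j$, hence over $\mathscr{W}_\Q\to\mathscr{W}_F$, and satisfying the compatibility $\phi_{\mathscr{X}_1}\circ\sigma_F^S=i^\ast\circ\phi_{\mathscr{X}_{\GL_2/F}^S}$. Finally, $i$ is finite: it lies over the finite morphism $j$, so it factors as $\mathscr{X}_1\to\mathscr{X}_{\GL_2/F}^S\times_{\mathscr{Z}_F}\mathscr{Z}_1\to\mathscr{X}_{\GL_2/F}^S$, where the second arrow is a base change of the finite morphism $\mathscr{X}_{\GL_2/F}^S\to\mathscr{Z}_F$ and the first is a morphism of pseudorigid spaces each finite over $\mathscr{Z}_1$, hence finite.

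The main obstacle is the combination underlying step (iii): one must know that the classical cuspidal, non-$F$-CM points are genuinely Zariski dense in the relevant components of the \emph{extended} eigenvariety for $\GL_2/\Q$ — so that the classical base-change identity propagates across the whole space, $j$ factors through $\mathscr{Z}_F$, and the Zariski closure of $\mathscr{X}^{\mathrm{cl}}$ is exactly $\mathscr{X}_{\GL_2/\Q,\mathrm{cusp},F\mathrm{-ncm}}^S$ — and one must pin down the ``non-$F$-CM'' locus precisely enough that classical base change really lands in cuspidal cohomology. The density is proved as in Proposition~\ref{prop: twist classical dense}, via twisting by $p$-adic characters together with a ``small slope implies classical'' theorem, and ensures in particular that $\mathscr{X}^{\rig}$ is Zariski dense; the cuspidality bookkeeping is precisely the role of the ``ncm'' decoration and rests on the classification of when cyclic base change for $\GL_2$ remains cuspidal.
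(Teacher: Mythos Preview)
The paper does not give its own proof of this statement: it is quoted verbatim as \cite[Theorem 4.3.1]{johansson-newton17}, with only a footnote remarking that the neatness hypothesis there can be relaxed. Your proposal is therefore not being compared to a proof in the present paper but to the argument in the cited reference, and it is indeed a faithful reconstruction of that argument: set up the two eigenvariety data, build the map of spectral varieties from $\mathscr{W}_\Q\to\mathscr{W}_F$ together with the $[F:\Q]$-power on the $\A^1$-coordinate, use classical cyclic base change for $\GL_2$ to verify the interpolation hypothesis at the Zariski-dense set of classical cuspidal non-$F$-CM points, and apply \cite[Theorem 3.2.1]{johansson-newton17}.

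Two small comments on presentation. First, your description of $j$ as a ``composite'' of (i)--(iii) is slightly misleading: you mean that $j$ is the restriction to $\mathscr{Z}_1$ of the map $\A^1_{\mathscr{W}_\Q}\to\A^1_{\mathscr{W}_F}$ given by $\mathscr{W}_\Q\to\mathscr{W}_F$ on the base and $t\mapsto t^{[F:\Q]}$ on the fibre, not a literal composition through $\mathscr{W}_\Q$. Second, the finiteness of $\mathscr{W}_\Q\to\mathscr{W}_F$ (which is what you actually need, rather than that it be a closed immersion) uses the surjectivity of the norm $T_{F,0}\to T_{\Q,0}$ together with compatibility of the quotients by $\overline{Z(K)}$; this is straightforward under the running hypothesis that $p$ splits completely in $F$, but it is worth saying explicitly.
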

Here the source includes only cuspidal components with a Zariski-dense set of forms without CM by an imaginary quadratic subfield of $F$.

Let $\Gal(F/\Q)=\left\langle \tau\right\rangle$. Then a cuspidal automorphic representation $\pi$ of $\GL_2(\A_F)$ is in the image of the base change map if and only if $\pi\circ\tau\cong\pi$.  In particular, the systems of Hecke eigenvalues of base-changed representations must be fixed by $\Gal(F/\Q)$.

This characterization of the image of the classical base change map permits us to prove automorphy lifting theorems by passing to a more convenient solvable extension.  We therefore wish to characterize the image of the interpolated base change map when $F$ is totally real and completely split at $p$ (so that the ``$F$ - ncm'' condition is vacuous).  We further assume that $[F:\Q]$ is prime to $p$.

We will study the ``$\Gal(F/\Q)$-fixed locus'' in the $\GL_{2/F}$-eigenvariety $\mathscr{X}_{\GL_2/F}^{S,\Gal(F/\Q)}$ and show that it is the image of the cyclic base change map; Xiang ~\cite{xiang2018} used a similar idea to construct $p$-adic families of essentially self-dual automorphic representations.

\begin{remark}
	We expect that it is possible to construct a base change morphism and characterize its image for more general cyclic extensions of number fields $F'/F$; however, for simplicity (and compatibility with ~\cite{johansson-newton17}) we have chosen to restrict to this setting.
\end{remark}

We first observe that $\Gal(F/\Q)$ acts on $\GL_{2/F}$, stabilizing $\T\subset\B$ and $I$, and also stabilizing the tame level $K_F^p$.  Next, observe that $\Gal(F/\Q)$ acts on $\mathbb{T}_F^S$ via $(\tau\cdot T)(g)=T(\tau^{-1}(g))$ for all $T\in \mathbb{T}_F^S$ and $g\in \GL_2(\A_{F,f})$.  Then for any $\delta\in\Delta$, $(\tau\cdot [K_F\delta K_F])(g) = [K_F\tau^{-1}(\delta)K_F](g)$; in particular, $\tau\cdot U_{\varpi_v}=U_{\tau(v)}$, and hence $\Gal(F/\Q)$ fixes $U_p$.  Similarly, we have an action of $\Gal(F/\Q)$ on $\mathscr{W}_\Q$ given via $(\tau\cdot\lambda)(g) = \lambda(\tau^{-1}(g))$;
the image of $\mathscr{W}_{\Q}$ in $\mathscr{W}_F$ is the diagonal locus, i.e., exactly the $\Gal(F/\Q)$-fixed locus.

Since $U_p$ is fixed by $\Gal(F/\Q)$, we see that if $\kappa$ is a weight fixed by $\Gal(F/\Q)$, then the Fredholm determinant $F_\kappa(T)$ of the action of $U_p$ on $C^\bullet(K_F,\mathscr{D}_\kappa)$ is fixed by $\Gal(F/\Q)$.  Thus, we have a spectral variety $\mathscr{Z}^{\Gal(F/\Q)}\subset \mathscr{W}_F^{\Gal(F/\Q)}\times \A^{1,\an}$ over $\mathscr{W}_F^{\Gal(F/\Q)}$.

\begin{lemma}
	Let $\kappa:T_0\rightarrow R^\times$ be a weight fixed by $\Gal(F/\Q)$.  There is an action of $\Gal(F/\Q)$ on $C^\bullet(K_F,\mathscr{D}_\kappa)$ and if $\mathscr{D}_\kappa$ admits a slope-$\leq h$ decomposition, the action of $\Gal(F/\Q)$ stabilizes $C^\bullet(K_F,\mathscr{D}_\kappa)_{\leq h}$.
	\label{lemma: gal action on coh}
\end{lemma}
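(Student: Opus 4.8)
The plan is to build the $\Gal(F/\Q)$-action geometrically, from the action of $\tau$ on the arithmetic quotients underlying the Borel--Serre construction, and then to check its compatibility with the coefficient module $\mathscr{D}_\kappa$ and with the operator $U_p$.

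First I would recall that $\tau$ acts on $\G=\Res_{F/\Q}\GL_2$, and that (as noted above) this action stabilizes $\T\subset\B$, the Iwahori $I$, and the tame level $K_F^p$, hence the full level $K_F=K_F^pI$; it also stabilizes $K_\infty$ and $Z_\infty^\circ$ and normalizes $\G(\Q)$. Consequently $\tau$ induces an automorphism of the adelic double-coset spaces underlying the augmented Borel--Serre complexes, and by functoriality of these complexes it gives an action of $\Gal(F/\Q)$ on $C_c^\bullet(K_F,-)$ (and on $C_\bullet^{\mathrm{BM}}(K_F,-)$) as abstract complexes; at the level of Hecke operators, $\tau$ intertwines $[K_F\delta K_F]$ with $[K_F\tau(\delta)K_F]$, so it fixes every prime-to-$p$ Hecke operator (since $K_F^p$ is $\tau$-stable) and sends $U_v$ to $U_{\tau(v)}$ for $v\mid p$.

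Next I would lift $\tau$ to the coefficient module. The module $\mathscr{D}_\kappa$ is assembled from distributions on a space attached to $I$ and to the weight $\kappa$; since $\kappa:T_0\to R^\times$ is fixed by $\Gal(F/\Q)$, the relabeling $v\mapsto\tau(v)$ of the places above $p$ induces an isomorphism $\tau^\ast\mathscr{D}_\kappa\xrightarrow{\ \sim\ }\mathscr{D}_\kappa$ which is equivariant for the $\Delta_p$-action up to that same relabeling. Combining this with the geometric action of the previous step yields a well-defined action of $\Gal(F/\Q)$ on $C_c^\bullet(K_F,\mathscr{D}_\kappa)$ (as a complex of $\Z_p$-modules with $\mathbb{T}$-action), under which $U_v$ acts as $U_{\tau(v)}$ for each $v\mid p$. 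In particular, because $\tau$ merely permutes the places above $p$ and $U_p=\prod_{v\mid p}U_v$, this action commutes with $U_p$ (and with the entire unramified-away-from-$p$ Hecke algebra).

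It then remains to treat the slope-$\leq h$ part. If $\mathscr{D}_\kappa$ admits a slope-$\leq h$ decomposition for $U_p$, then $C_c^\bullet(K_F,\mathscr{D}_\kappa)_{\leq h}$ is the canonical $U_p$-stable direct summand on which $U_p$ acts with slopes $\leq h$ (its complement having slopes $>h$); by the uniqueness of such decompositions, this summand is preserved by any chain endomorphism commuting with $U_p$, and in particular by the automorphisms coming from $\Gal(F/\Q)$. Hence the action stabilizes $C_c^\bullet(K_F,\mathscr{D}_\kappa)_{\leq h}$, as claimed. The only genuinely delicate point is the bookkeeping in the second step: one must write down the precise formula for the action of $\tau$ on $\mathscr{D}_\kappa$ --- using $\Gal(F/\Q)$-invariance of $\kappa$ to see that the relevant central and determinant normalizations are matched --- and check compatibility with the $\Delta_p$-action after permuting the factors indexed by places above $p$; the remaining steps are formal.
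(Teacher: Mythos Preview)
Your proposal is correct and follows essentially the same approach as the paper: construct the $\Gal(F/\Q)$-action on the coefficient module $\mathscr{D}_\kappa$ using $\tau$-stability of $I$, $B_0$ and invariance of $\kappa$, combine with the action on the adelic quotients by functoriality, and then invoke the fact that the action commutes with $U_p$ together with uniqueness of slope decompositions (the paper cites \cite[Proposition~2.2.11]{johansson-newton} for this last step). The one place the paper is more explicit than you is the ``delicate bookkeeping'' you flag: it builds the action on $\mathscr{D}_\kappa$ by first defining $\tau:\mathcal{A}_\kappa\to\mathcal{A}_{\tau(\kappa)}=\mathcal{A}_\kappa$ on the inducing functions $f:I\to R$ with $f(gb)=\kappa(b)f(g)$, and then dualizing.
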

\begin{proof}
	Referring to the definition of $\mathscr{D}_\kappa$ for an arbitrary weight $\kappa$, we have $\mathscr{D}_\kappa=\varprojlim\mathcal{D}_\kappa^r$, where $\mathcal{D}_\kappa^r$ is the completion of a module $\mathcal{D}_\kappa$ with respect to a norm $\|\cdot\|_r$.  The module $\mathcal{D}_\kappa$ itself is the continuous dual of the space $\mathcal{A}_\kappa\subset \mathcal{C}(I,R)$ of continuous functions $f:I\rightarrow R$ such that $f(gb)=\kappa(b)f(g)$ for all $g\in I$ and $b\in B_0$.  It follows that we have a map $\tau:\mathcal{A}_\kappa\rightarrow \mathcal{A}_{\tau(\kappa)}$ (since the action of $\Gal(F/\Q)$ preserves both $I$ and $B_0$).  If $\kappa$ is fixed by $\tau$, we obtain a dual action of $\Gal(F/\Q)$ on $\mathcal{D}_\kappa$, and hence $\mathcal{D}_\kappa^r$ and $\mathscr{D}_\kappa$.

	Since $K_F^p$ is also stable under the action of $\Gal(F/\Q)$ and the actions of $K_F^p$ and $\Gal(F/\Q)$ on $\mathscr{D}_\kappa$ commute, by functoriality we obtain an action of $\Gal(F/\Q)$ on $C^\bullet(K_F,\mathscr{D}_\kappa)$.  Moreover, the action of $\Gal(F/\Q)$ fixes the Hecke operator $U_p$, so ~\cite[Proposition 2.2.11]{johansson-newton} implies that the action of $\Gal(F/\Q)$ also preserves $C^\bullet(K_F,\mathscr{D}_\kappa)_{\leq h}$.
\end{proof}

\begin{lemma}
	Let $\kappa:T_0\rightarrow R^\times$ be a weight fixed by $\Gal(F/\Q)$.  For any $T\in\mathbb{T}_F^S$, we have $\tau\cdot T = \tau\circ T\circ \tau^{-1}$ as operators on $C^\bullet(K_F,\mathscr{D}_\kappa)$.
	\label{lemma: twisted action on cohomology}
\end{lemma}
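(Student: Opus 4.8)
The plan is to realize both the $\Gal(F/\Q)$-action on $C^\bullet(K_F,\mathscr{D}_\kappa)$ and the $\Gal(F/\Q)$-action on $\mathbb{T}_F^S$ as manifestations of a single functoriality, so that the commutation relation is formal, and then to match the functorially induced action on $\mathbb{T}_F^S$ with the one defined in \textsection\ref{subsection: eigenvariety definitions} by $(\tau\cdot T)(g)=T(\tau^{-1}(g))$ via a direct double-coset computation. Recall from the proof of Lemma~\ref{lemma: gal action on coh} that the action of $\tau$ on $C^\bullet(K_F,\mathscr{D}_\kappa)$ is induced, by functoriality of the augmented Borel--Serre complexes of \cite{hansen}, \cite{johansson-newton}, from the automorphism $\tau$ of the pair $\bigl(\GL_2(\A_{F,f}),K_F^p\bigr)$ — it is a ring automorphism of $\A_{F,f}=F\otimes_\Q\A_{\Q,f}$ coming from the action on $F$, hence a group automorphism of $\GL_2(\A_{F,f})$ permuting the places of $F$ — which carries $K_F^p$ to itself and the Iwahori $I$ to itself, together with the induced automorphism $\tau\colon\mathscr{D}_\kappa\xrightarrow{\sim}\mathscr{D}_{\tau\kappa}=\mathscr{D}_\kappa$ (available since $\kappa$ is $\tau$-fixed). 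The Hecke correspondences defining the operators $[K_F\delta K_F]$ are part of the same functorial package: since $\tau(K_F)=K_F$, applying $\tau$ carries the correspondence attached to $\delta$ to the one attached to $\tau(\delta)$ (and intertwines the coefficient action of $\delta$ on $\mathscr{D}_\kappa$ with that of $\tau(\delta)$ by construction of the $\Gal(F/\Q)$-action on $\mathscr{D}_\kappa$; for $T\in\mathbb{T}_F^S$ this last point is vacuous, as such $\delta$ are supported away from $p$ and act trivially on $\mathscr{D}_\kappa$). Consequently $\tau$ induces a ring automorphism $T\mapsto\tau(T)$ of $\mathbb{T}_F^S$ together with the compatibility
\[	\tau\circ T \;=\; \tau(T)\circ\tau \qquad\text{on}\ \ C^\bullet(K_F,\mathscr{D}_\kappa)	\]
for every $T\in\mathbb{T}_F^S$.

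It remains to identify the functorially induced automorphism $T\mapsto\tau(T)$ with the action $T\mapsto\tau\cdot T$ of \textsection\ref{subsection: eigenvariety definitions}. Since $\mathbb{T}_F^S$ is generated by the unramified local operators $T_v$ and $S_v^{\pm1}$ for $v\notin S$, it suffices to check the two ring homomorphisms $\mathbb{T}_F^S\to\End\bigl(C^\bullet(K_F,\mathscr{D}_\kappa)\bigr)$ agree on those generators; but there $\tau(T_v)=T_{\tau(v)}=\tau\cdot T_v$ and $\tau(S_v)=S_{\tau(v)}=\tau\cdot S_v$, either directly from $(\tau\cdot T)(g)=T(\tau^{-1}(g))$, which transports the double coset $K_F\delta K_F$ to $\tau(K_F\delta K_F)=K_F\,\tau(\delta)\,K_F$, or from the identity $\tau\cdot U_v=U_{\tau(v)}$ noted in \textsection\ref{subsection: eigenvariety definitions}. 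Combining with the displayed compatibility yields $\tau\circ T=(\tau\cdot T)\circ\tau$, i.e.\ $\tau\cdot T=\tau\circ T\circ\tau^{-1}$, for all $T\in\mathbb{T}_F^S$.

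The genuine content of the argument is the verification that the Borel--Serre construction of \cite{johansson-newton} is functorial for the group automorphism $\tau$ of $\GL_2(\A_{F,f})$ with the predicted effect on Hecke correspondences and on the coefficient system, compatibly with the setup in the proof of Lemma~\ref{lemma: gal action on coh}; I expect this — essentially an unwinding of definitions — to be the only real obstacle. Once it is in place the identity is formal, given $\tau(K_F)=K_F$ and the double-coset description of the Hecke action; the sole point requiring care is to reconcile the direction conventions (whether $\tau$ acts on the complex by pushforward or pullback, against the convention $(\tau\cdot T)(g)=T(\tau^{-1}(g))$ on $\mathbb{T}_F^S$) so that the two induced actions on $\mathbb{T}_F^S$ coincide rather than differ by an inversion. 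No input from the slope theory is needed, since this is an identity of operators on the full complex $C^\bullet(K_F,\mathscr{D}_\kappa)$, which by Lemma~\ref{lemma: gal action on coh} automatically preserves the slope-$\leq h$ subcomplex.
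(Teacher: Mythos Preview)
Your approach is correct and essentially the same as the paper's: both rest on the identity $\tau\cdot[K_F\delta K_F]=[K_F\tau(\delta)K_F]$ and the claim that the Hecke correspondence for $\tau(\delta)$ factors as $\tau\circ[K_F\delta K_F]\circ\tau^{-1}$ on the complex. The paper simply carries out explicitly the unwinding you defer to functoriality, by factoring the conjugation map $\tau(\delta)K_F\tau(\delta)^{-1}\to K_F$ as $\tau^{-1}$, then conjugation by $\delta$, then $\tau$, and likewise for the coefficient map $d\mapsto\tau(\delta)\cdot d$.
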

\begin{proof}
	We may assume $T=[K_F\delta K_F]$ for some $\delta\in\Delta$.  Then $\tau\cdot [K_F\delta K_F] = [K_F\tau(\delta)K_F]$, and the corresponding morphism 
	\[	C^\bullet(K_F,\mathscr{D}_\kappa) \rightarrow C^\bullet(\tau(\delta) K_F\tau(\delta)^{-1}, \mathscr{D}_\kappa)	\]
	is induced by the conjugation map $\tau(\delta)K_F\tau(\delta)^{-1}\rightarrow K_F$ and the map $\mathscr{D}_\kappa\rightarrow \mathscr{D}_\kappa$ given by $d\mapsto \tau(\delta)\cdot d$.  But $\tau(\delta) K_F\tau(\delta)^{-1} = \tau\left(\delta \tau^{-1}(K_F) \delta^{-1}\right)$, so we may factor the conjugation map as
	\[	\tau(\delta) K_F\tau(\delta)^{-1}\xrightarrow{\tau^{-1}} \delta \tau^{-1}(K_F) \delta^{-1} \rightarrow \tau^{-1}(K_F) \xrightarrow{\tau} K_F 	\]
	Similarly, $d\mapsto \tau(\delta)\cdot d$ factors as $\tau\circ T\circ \tau^{-1}$, so our morphism of complexes also factors as desired.
\end{proof}

We may restrict $\mathscr{M}_c^\ast$ to $\mathscr{Z}^{\Gal(F/Q)}$; we denote this restriction by $\mathscr{H}^\ast$ and by abuse of notation, we again use $\mathscr{T}$ to denote the sheaf generated by the image of $\mathbb{T}_F^S$ in $\mathscr{E}nd_{\mathscr{Z}^{\Gal(F/\Q)}}\left(\mathscr{H}^\ast\right)$.  Then the slice of the eigenvariety $\mathscr{X}_{\GL_2/F}^S$ over $\mathscr{W}_F^{\Gal(F/\Q)}$ is, by definition, $\underline\Spa \mathscr{T}$.

Both $\mathbb{T}(\Delta^p,K_F^p)$ and $\End_{\O(V)}\left(\mathscr{H}^\ast\right)$ have actions of $\Gal(F/\Q)$, and Lemma~\ref{lemma: twisted action on cohomology} implies that they are compatible.  Thus, $\mathscr{T}(V)$ and $\mathscr{X}_{\GL_2/F}^S|_{\mathscr{W}_F^{\Gal(F/\Q)}}$ have actions of $\Gal(F/\Q)$.

The subspace of $\mathscr{X}_{\GL_2/F}$ fixed by $\Gal(F/\Q)$ corresponds to the sheaf $V\mapsto \mathscr{T}(V)_{\Gal(F/\Q)}$ of co-invariants of $\mathscr{T}$.  Since $\Gal(F/\Q)$ is a finite group with order prime to $p$, $\mathscr{T}(V)_{\Gal(F/\Q)}$ is a $\O_{\mathscr{W}_F}(V)$-linear direct summand of $\mathscr{T}(V)$.

The above discussion gives us a closed subspace $\mathscr{X}_{\GL_2/F}^{S,\Gal(F/\Q)}\hookrightarrow \mathscr{X}_{\GL_2/F}^S$.



	We let 
	\[	\mathscr{X}_{\GL_2/F}^{S, \Gal(F/\Q),\circ}:= \mathscr{X}_{\GL_2/F}^{S,\Gal(F/\Q)}\cap \mathscr{X}_{\GL_2/F,\mathrm{mid}}^S	\]
	and we let $\overline{\mathscr{X}_{\GL_2/F}^{S, \Gal(F/\Q),\circ}}$ denote its Zariski closure in $\mathscr{X}_{\GL_2/F}^S$.
	\begin{lemma}
		Classical points are very Zariski dense in $\mathscr{X}_{\GL_2/F}^{S, \Gal(F/\Q),\circ}$.
		\label{lemma: gal fixed zariski dense}
	\end{lemma}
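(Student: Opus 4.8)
The plan is to mimic the proof of Proposition~\ref{prop: twist classical dense}, via the density criterion \cite[Lemme 6.2.8]{chenevier2004}: it suffices to cover $\mathscr{X}_{\GL_2/F}^{S,\Gal(F/\Q),\circ}$ by affinoids $\Spa\mathscr{T}^{\Gal(F/\Q)}(V)$ attached to slope data $(U,h)$ with $U=\Spa R\subset\mathscr{W}_F^{\Gal(F/\Q)}$, to check that $\Spec\mathscr{T}^{\Gal(F/\Q)}(V)\to\Spec R$ carries irreducible components onto irreducible components, and to produce a Zariski-dense subset $W^{\mathrm{cl}}_{U,h}\subset U$ of classical weights over which every point of the eigenvariety is classical. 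The first point to record is that $\mathscr{W}_F^{\Gal(F/\Q)}$ is the image of $\mathscr{W}_\Q$ under the norm map, and over $\Q$ there is no Leopoldt obstruction, so $\mathscr{W}_\Q'=\mathscr{W}_\Q^{\rig}$ and classical algebraic weights are already very Zariski dense in $\mathscr{W}_F^{\Gal(F/\Q)}$; this is precisely what makes the argument simpler here than in Proposition~\ref{prop: twist classical dense}, and no twisting by $p$-adic characters is needed.

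For the component-surjectivity, I would argue as follows. Over $\mathscr{X}_{\GL_2/F,\mathrm{mid}}$ the complex $C_c^\bullet(K_F,\mathscr{D}_\kappa)_{\leq h}$ has cohomology concentrated in degree $d$, the sheaf $\mathscr{M}_c^d$ is flat over $\mathscr{W}_F$, and its formation commutes with arbitrary base change by \cite[Theorem 4.2.1]{johansson-newton}; restricting to a slope datum over the diagonal weights, the same theorem shows that $H_c^d(K_F,\mathscr{D}_U)_{\leq h}$ is a flat $R$-module. Since $[F:\Q]$ is prime to $p$, the functor of $\Gal(F/\Q)$-invariants is the image of an idempotent in $\Z_p[\Gal(F/\Q)]$ and hence exact, so $(\mathscr{M}_c^d)^{\Gal(F/\Q)}(V)$ is an $R$-flat direct summand of $H_c^d(K_F,\mathscr{D}_U)_{\leq h}$, equipped with the transported Hecke action of Lemma~\ref{lemma: gal action on coh}. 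Then \cite[Lemme 6.2.10]{chenevier2004}, exactly as in the proof of Proposition~\ref{prop: twist classical dense}, gives that irreducible components of $\Spec\mathscr{T}^{\Gal(F/\Q)}(V)$ map onto irreducible components of $\Spec R$.

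For the dense set of classical weights I would take $W^{\mathrm{cl}}_{U,h}$ to consist of the sufficiently regular dominant diagonal classical algebraic weights in $U$ for which the fixed slope $h$ is small relative to the weight; this is Zariski dense in $U$ by the remark above. Over such a weight $\lambda$, Birkbeck's ``small slope implies classical'' theorem \cite[Theorem 4.3.7]{birkbeck2019} shows that every finite-slope overconvergent class of weight $\lambda$ and slope $\leq h$ is classical, and classical cuspidal cohomological classes of regular algebraic weight are concentrated in middle degree by \cite[\S 3.6]{harder1987}, so the corresponding points automatically lie in $\mathscr{X}_{\GL_2/F,\mathrm{mid}}^S$. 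A fortiori every point of $\mathscr{X}_{\GL_2/F}^{S,\Gal(F/\Q),\circ}$ lying over $W^{\mathrm{cl}}_{U,h}$ is classical, and \cite[Lemme 6.2.8]{chenevier2004} then yields very Zariski density.

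I expect the main obstacle to be the component-surjectivity step: one must check that the middle-degree formalism remains compatible with \emph{both} restriction to the diagonal weight space and passage to $\Gal(F/\Q)$-invariants, so that flatness and base-change compatibility survive and \cite[Lemme 6.2.10]{chenevier2004} genuinely applies; in particular one should verify that the idempotent projectors cutting out the affinoid charts commute with the $\Gal(F/\Q)$-action, using Lemma~\ref{lemma: gal action on coh}, and that the prime-to-$p$ hypothesis on $[F:\Q]$ (which is exactly what keeps the invariants functor exact) is being used correctly throughout. A secondary point, handled by the classical theory of \cite{harder1987}, is to confirm that the classical points produced in the last step genuinely lie in the middle-degree open locus rather than merely in $\mathscr{X}_{\GL_2/F}^S$.
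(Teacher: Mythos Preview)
Your proposal is correct and follows essentially the same route as the paper: projectivity of the $\Gal(F/\Q)$-invariant middle-degree cohomology via the prime-to-$p$ hypothesis, then \cite[Lemme 6.2.10]{chenevier2004} for component surjectivity, then a Zariski-dense set of small-slope classical weights feeding into \cite[Lemme 6.2.8]{chenevier2004}. One small citational correction: \cite[Theorem 4.3.7]{birkbeck2019} is stated for quaternionic forms, so for the $\GL_2/F$ eigenvariety you should instead invoke a cohomological control theorem in that setting (e.g.\ as in \cite{bergdall-hansen} or \cite{hansen}).
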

	\begin{proof}
		If $(U,h)$ is a slope datum and $W\subset \mathscr{X}_{\GL_2/F}^S$ is a connected affinoid subspace of the pre-image of $U$, then $\mathscr{T}(W)=e_W\mathscr{T}(U)$ and $\mathscr{M}_c^\ast(W)\cong e_WH_c^\ast(K,\mathscr{D}_U)_{\leq h}$, where $e_W$ is the idempotent projector to $W$.  If $W\subset \mathscr{X}_{\GL_2/F,\mathrm{mid}}^S$, then $\mathscr{M}_c^\ast\cong e_WH_c^d(K,\mathscr{D}_U)_{\leq h}$ and $H_c^d(K,\mathscr{D}_U)_{\leq h}$ is a projective $\O_{\mathscr{W}_F}(U)$-module.  It follows that the restriction of $\mathscr{M}_c^\ast$ to $\mathscr{X}_{\GL_2/F}^{S, \Gal(F/\Q),\circ}$ is a vector bundle over $U$.

		By ~\cite[Lemme 6.2.10]{chenevier2004}, $\mathscr{T}(U)$ is $\O_{\mathscr{W}_F}(U)$-torsion-free, and it remains torsion-free after any flat base change on $U$. Since $\mathscr{T}(U)_{\Gal(F/\Q)}$ is a direct summand of $\mathscr{T}(U)$, the same property holds for the co-invariants.  

		Now we may apply ~\cite[Lemme 6.2.10]{chenevier2004} again to conclude that $W\cap \mathscr{X}_{\GL_2/F}^{S,\Gal(F/\Q),\circ}$ is equidimensional of dimension $\dim \O_{\mathscr{W}_F^{\Gal(F/\Q)}}(U)$, and every irreducible component of $W\cap \mathscr{X}_{\GL_2/F}^{S,\Gal(F/\Q),\circ}$ surjects onto an irreducible component of $\Spec \O_{\mathscr{W}_F^{\Gal(F/\Q)}}(U)$.  

		If $x\in W\cap \mathscr{X}_{\GL_2/F}^{S,\Gal(F/\Q),\circ}$ has a classical weight that is sufficiently large (where ``sufficiently large'' depends on $h$), then $x$ corresponds to a classical Hilbert modular form.  But sufficiently large classical weights are very Zariski dense in $\mathscr{W}_F^{\Gal(F/\Q)}$, so ~\cite[Lemme 6.2.8]{chenevier2004} implies that classical points are very Zariski dense in $\mathscr{X}_{\GL_2/F}^{S,\Gal(F/\Q),\circ}$.
	\end{proof}

	\begin{remark}
		The proof Lemma~\ref{lemma: gal fixed zariski dense} is the only time we use our assumption that $\lvert \Gal(F/\Q)\rvert$ is prime to $p$.  If we restricted to the rigid analytic locus (where $p$ is invertible, so that $\mathscr{T}_{\Gal(F/\Q}$ is unconditionally a direct summand of $\mathscr{T}$, this assumption would be unnecessary.
	\end{remark}

	\begin{cor}\label{cor: image cyclic base change}
		The image of the cyclic base change morphism in $\mathscr{X}_{\GL_2/F,\mathrm{mid}}^S$ is exactly $\overline{\mathscr{X}_{\GL_2/F}^{S, \Gal(F/\Q),\circ}}$.
	\end{cor}
	\begin{proof}
		Since the cyclic base change morphism $\mathscr{X}_{\GL_2/\Q,\mathrm{cusp}}\rightarrow \mathscr{X}_{\GL_2/F}$ is finite, it has closed image.  Moreover, cyclic base change carries any classical point of $\mathscr{X}_{\GL_2/\Q,\mathrm{cusp}}$ to a point of $\mathscr{X}_{\GL_2/F}^{S, \Gal(F/\Q),\circ}$; since $\mathscr{X}_{\GL_2/F}^{S, \Gal(F/\Q),\circ}$ is closed in $\mathscr{X}_{\GL_2/F}$, it contains the entire image of the cyclic base change morphism.  On the other hand, every classical point of $\mathscr{X}_{\GL_2/F}^{S, \Gal(F/\Q),\circ}$ is in the image of cyclic base change, by the classical theorem, so Lemma~\ref{lemma: gal fixed zariski dense} implies the desired result.
	\end{proof}

	\subsection{Galois representations}\label{subsect: galois reps}

In ~\cite[\textsection 5.4]{johansson-newton}, the authors construct families of Galois determinants (in the sense of ~\cite{chenevier2014}) over the eigenvarieties $\mathscr{X}_{\mathbf{G}}$ when $\mathbf{G}=\Res_{F/\Q}\GL_n$ and $F$ is totally real or CM, and prove that they satisfy local-global compatibility at places away from $p$ and the level.  Then the Jacquet--Langlands correspondence lets us deduce the following:

\begin{thm}\label{thm: galois det on eigenvariety}
	Let $D$ be a quaternion algebra over a totally real field $F$, such that $F$ is totally split at $p$ and $D$ is split at all places above $p$. Let $K=K^pI\subset (\A_{F,f}\otimes D)^\times$ be the level, and let $S$ be the set of finite places $v$ of $F$ for which $D$ is ramified or $K_v\neq \GL_2(\O_{F_v})$.  Then there is a continuous $2$-dimensional Galois determinant $D:\Gal_{F,S}\rightarrow \O(\mathscr{X}_{\underline D^\times})^+$ such that
	\[	D(1-X\cdot{\Frob}_v) = P_v(X)	\]
	for all $v\notin S$, where $P_v(X)$ is the standard Hecke polynomial.  

	Moreover, if $v\mid p$ then for every maximal point $x\in \mathscr{X}_{\underline D^\times}$ of weight $\kappa_x=(\kappa_{x,1},\kappa_{x,2})$, we let $\psi:\O(\mathscr{X}_{\underline D^\times})^+\rightarrow k(x)^+$ denote the corresponding specialization map.  Then there is a proper Zariski-closed subspace $Z\subset \mathscr{X}_{\underline D^\times}$ such that for $x\notin Z$, the Galois representation corresponding to $D_x|_{\Gal_{F,v}}$ is trianguline with parameters $\delta_1,\delta_2:F_v^\times\rightrightarrows k(v)^\times$, where 
	\[	\delta_1|_{\O_{F_v}^\times} = \kappa_{x,2}^{-1}|_{\O_v^\times}\text{ and }\delta_1(\varpi_v) = \psi(U_{\varpi_v})	\]
and
\[	\delta_2|_{\O_{F_v}^\times} = (\kappa_{x,1}|_{\O_v^\times}\chi_{\cyc})^{-1}\text{ and }\delta_2(\varpi_v) = \psi(I_v\left(\begin{smallmatrix}\varpi_v & \\ & 1\end{smallmatrix}\right)I_v)	\]
\end{thm}
\begin{proof}
	It only remains to check local-global compatibility at places above $p$.  But this is true for non-critical classical points by work of Saito, Blasius--Rogawski, and Skinner, and it is true for twists of those classical points by the definition of twisting.  Then the result follows from ~\cite[Corollary 6.3.10]{kpx} and ~\cite[Theorem 6.8]{bellovin2021}.
\end{proof}

\begin{remark}
	For each point $x\in \mathscr{X}_{\underline D^\times}$, there is a residual Galois determinant $\overline D_x$ valued in a finite field.  These residual Galois determinants are constant on each connected component of $\mathscr{X}_{\underline D^\times}$, as a consequence of ~\cite[Lemma 3.10]{chenevier2014}.
	\label{rmk: residual local constancy}
\end{remark}

As in Corollary~\ref{cor: interpolation integral}, we can make a sharper local statement in the presence of integral structures.  Suppose $\kappa:T_0/\overline{Z(K)}\rightarrow R^\times$ is a weight, where $R$ is a pseudoaffinoid algebra equipped with a norm adapted to $\kappa$, and $R_0\subset R$ is the corresponding unit ball (so in particular, $\kappa$ takes values in $R_0$).  If $(\Spa R,h)$ is a slope datum, for any $r>r_\kappa$ we define
\[	H^0(K,\mathcal{D}_\kappa^{<r})_{\leq h}:=\im\left( H^0(K,\mathcal{D}_\kappa^{<r,\circ})\rightarrow H^0(K,\mathcal{D}_\kappa^{<r})_{\leq h} \right)	\]
	and
	\[	\mathbb{T}_{\kappa,\leq h}^{<r,\circ}:=\im\left( R_0\otimes\mathbb{T}\rightarrow \End_{R_0}(H^0(K,\mathcal{D}_\kappa^{<r,\circ})_{\leq h}) \right)	\]

	\begin{cor}
		With hypotheses and notation as above, there is a $2$-dimensional Galois determinant $D_0:\Gal_{F,S}\rightarrow\mathbb{T}_{\kappa,\leq h}^{<r,\circ,\red}$ such that 
		\[	R^\circ\otimes_{R_0}D_0=R^\circ\otimes_{\O(\mathscr{X}_{\underline D^\times})^+}D	\]
	\end{cor}
	\begin{proof}
		This is a corollary of the construction of ~\cite[\textsection 5.4]{johansson-newton}, rather than of Theorem~\ref{thm: galois det on eigenvariety}.  For each maximal point $x\in \Spa R$ with residue field $L$ and ring of integers $\O_L$, let $\kappa_x$ be the composition of $\kappa$ with $R_0\rightarrow \O_L$. By ~\cite[Corollary 5.3.2(2)]{johansson-newton} combined with Corollary~\ref{cor: interpolation integral}, there is a $2$-dimensional Galois determinant $D_x:\Gal_{F,S}\rightarrow \mathbb{T}_{\kappa_x,\leq h}^{<r,\circ,\mathrm{red}}$ valued in the reduced quotient of $\mathbb{T}_{\kappa_x,\leq h}^{<r,\circ}$.  We have an injection
		\[	\mathbb{T}_{\kappa,\leq h}^{<r,\circ,\red}\hookrightarrow \prod_x \mathbb{T}_{\kappa_x,\leq h}^{<r,\circ,\mathrm{red}}	\]
		where the $x$ range over maximal points of $\Spa R$.  The ring $\mathbb{T}_{\kappa,\leq h}^{<r,\circ,\red}$ is compact since it is a finite $R_0$-module, so by ~\cite[Example 2.3.2]{chenevier2014} the $\mathbb{T}_{\kappa_x,\leq h}^{<r,\circ,\mathrm{red}}$-valued determinants glue to $D_0$.
	\end{proof}

	\subsection{Quaternionic sub-eigenvarieties}\label{subsect: sub-eigenvarieties}

	In order to study suitable spaces of overconvergent quaternionic modular forms, we will need to define and study eigenvarieties parametrizing quaternionic modular forms with certain auxiliary data fixed.
	We let $F$ be a totally real number field totally split at $p$, and we let $D$ be a totally definite quaternion algebra over $F$, split at all places above $p$.  We fix a level $K\subset \left(\A_{F,f}\otimes_F D\right)^\times$ and monoid $K\subset \Delta\subset \left(\A_{F,f}\otimes_F D\right)^\times$, and we set $\mathbb{T}$ to be either $\mathbb{T}(\Delta^p,K^p)$ or $\mathbb{T}(\Delta,K)$.

In order to construct an eigenvariety for $\underline D$, we fixed a Borel--Serre complex $C_c^\bullet(K,-)$ and we considered the cohomology $C_c^\bullet(K,\mathscr{D}_\kappa)$.  However, because we assumed $D$ is totally definite, the associated Shimura manifold is a finite set of points, and so the cohomology of $C_c^\bullet(K,-)=C^\bullet(K,-)$ vanishes outside of degree $0$.  

Thus, we can give an extremely concrete description of the automorphic forms of interest to us and of the Hecke operators acting on them.  Suppose that $M$ is a left $R[\Delta]$-module, for some pseudoaffinoid algebra $R$.  Then if $f:D^\times\backslash(\A_{F,f}\otimes_F D)^\times\rightarrow M$ is a function and $\gamma\in \Delta$, we define ${}_\gamma|f$ via ${}_\gamma|f(g) = \gamma \cdot f(g\gamma)$.  Then
\[	H^0(K,M) = \left\{f:D^\times\backslash(\A_{F,f}\otimes_F D)^\times\rightarrow M \mid {}_\gamma|f=f\text{ for all } \gamma\in K\right\}	\]
We can describe the Hecke operator $[KgK]:H^0(K,M)\rightarrow H^0(K,M)$ explicitly for any $g\in \Delta$; we decompose the double coset $KgK=\coprod_i g_iK$ as a finite disjoint union of cosets, and we have
\[	[KgK]f:=\sum_i{{}_{g_i}|}f	\]

The first piece of auxiliary data we want to fix is the central character.  If $\xi:\A_{F,f}^\times/F^\times\rightarrow R_0^\times$ is a continuous character such that $\xi|_{K_v\cap \O_{F_v}^\times}$ agrees with the action of $K_v\cap \O_{F_v}^\times$ on $M$ for all finite places $v$ of $F$, we may extend the action of $K$ on $M$ to an action of $K\cdot\A_{F,f}^\times$, by letting $\A_{F,f}^\times$ act by $\xi$.  Then we define
\[	H^0(K,M)[\xi]:= \{f\in H^0(K,M) \mid {}_z|f = f \text{ for all }z\in\A_{F,f}^\times\}	\]
If we write $D^\times\backslash (\A_{F,f}\otimes_F D)^\times/K=\coprod_{i\in I}D^\times g_i K\A_{F,f}^\times$ for some finite set of elements $g_i\in (\A_{F,f}\otimes_F D)^\times$, the natural map
\begin{align*}
	H^0(K,M)[\xi] &\rightarrow \oplus_{i\in I}M^{(K\A_{F,f}^\times\cap g_i^{-1}D^\times g_i)/F^\times}	\\
	f &\mapsto (f(g_i))
\end{align*}
is an isomorphism.

The calculations of ~\cite[Lemma 1.1]{taylor2006} show that $(K\A_{F,f}^\times\cap g_i^{-1}D^\times g_i)/F^\times$ is a finite group with order prime to $p$ for all $i$ (since we assumed $p\neq 2$).  Thus, if $M$ is a potentially orthonormalizable Banach $R$-module, then so is $H^0(K,M)[\xi]$, and we will be able to apply the formalism of slope decompositions to quaternionic modular forms with fixed central character.  More precisely, we may consider the action of a compact operator $U$ on $H^0(K,M)[\xi]$. If $H^0(K,M)[\xi]$ admits a slope-$\leq h$-decomposition, then $H^0(K,M)[\xi]_{\leq h}$ is a finite $R$-module which is a direct summand of $H^0(K,M)[\xi]$.  Since $H^0(K,M)[\xi]$ is potentially orthonormalizable, $H^0(K,M)[\xi]_{\leq h}$ satisfies the property (Pr) of ~\cite{buzzard} and by ~\cite[Lemma 2.11]{buzzard} it is actually projective as an $R$-module.

The coefficient modules of interest to us are the modules of distributions $\mathscr{D}_\kappa$ constructed in ~\cite{johansson-newton}, and we fix a character $\xi:\A_{F,f}^\times/F^\times\rightarrow \Z_p[\![T_0/\overline{Z(K)}]\!]^\times$ as above.  The operator $U_p$ commutes with the action of $\A_{F,f}^\times/F^\times$ on $\mathscr{D}_\kappa$ given by $\xi$, so $U_p$ acts compactly on $C^\ast(K,\mathscr{D}_\kappa)[\xi]$.  We may construct a corresponding spectral variety $\mathscr{Z}_\xi$ and eigenvariety datum $(\mathscr{Z}_\xi, \mathscr{M}_{\xi},\mathbb{T},\psi)$, where $\mathscr{M}_{\xi}$ is the coherent sheaf on $\mathscr{Z}_\xi$ coming from factorizations of the characteristic power series of $U_p$; we write $\mathscr{X}_{\underline D^\times,\xi}$ for the corresponding eigenvariety.

By construction, $H^0(K,\mathscr{D}_\kappa)[\xi]_{\leq h}$ is a projective $R$-module whenever $(U,h)$ is a slope datum. Then ~\cite[Lemme 6.2.10]{chenevier2004} implies that if $\mathscr{M}_{\xi}$ is non-zero, $\mathscr{X}_{\underline D^\times,\xi}$ is equidimensional of the same dimension as $\mathscr{W}_F$.

Moreover, for each maximal point $x\in \mathscr{X}_{\underline D^\times,\xi}$, the corresponding Hecke eigensystem appears in $\mathscr{X}_{\underline D^\times}$ (with unrestricted central character), by construction.  Then the interpolation theorem ~\cite[Theorem 3.2.1]{johansson-newton17} implies that there is a closed immersion $\mathscr{X}_{\underline D^\times,\xi}^{\red}\hookrightarrow \mathscr{X}_{\underline D^\times}$, and dimension considerations imply that its image is a union of irreducible components of $\mathscr{X}_{\underline D^\times}$.  

This implies in particular that as $(U,h)$ runs over slope data for $C^\ast(K,\mathscr{D}_\kappa)[\xi]$, the sets $W_{U,h}^{\prime,\mathrm{ss}}\subset U$ of semi-simple weights constructed in Proposition~\ref{prop: XD reduced} are Zariski dense.  Then we may repeat the argument of that proposition to conclude that $\mathscr{X}_{\underline D^\times,\xi}$ is itself reduced.

We have shown the following:
\begin{prop}
	Given a character $\xi: \A_{F,f}^\times/F^\times\rightarrow \O(\mathscr{W}_F)^{\times}$ as above, there is an eigenvariety $\mathscr{X}_{\underline D^\times,\xi}$ of quaternionic modular forms with central character $\xi$.  It is reduced and equidimensional, and it is naturally identified as a (possibly empty) union of irreducible components of $\mathscr{X}_{\underline D^\times}$.
\end{prop}

We also wish to introduce eigenvarieties localized at maximal ideals of Hecke algebras.  Let $\mathfrak{m}\subset \mathbb{T}$ be a maximal ideal.  By Theorem~\ref{thm: galois det on eigenvariety} and Remark~\ref{rmk: residual local constancy}, the residual Hecke eigenvalues are locally constant on $\mathscr{X}_{\underline D^\times}$. It follows that the restrictions $\mathscr{M}_{\mathfrak{m}}$ and $\mathscr{M}_{\xi,\mathfrak{m}}$ are supported on unions of connected components of $\mathscr{Z}$, which we write $\mathscr{Z}_\mathfrak{m}$ and $\mathscr{Z}_{\xi,\mathfrak{m}}$, respectively.
In particular, if $(U,h)$ is a slope datum, then $H^0(K,\mathscr{D}_U)_{\leq h,\mathfrak{m}}$ and $H^0(K,\mathscr{D}_U)_{\leq h,\xi,\mathfrak{m}}$ are again finite projective $\O(U)$-modules.  Then an identical argument shows the following:
\begin{prop}
	Given a character $\xi: \A_{F,f}^\times/F^\times\rightarrow \O(\mathscr{W}_F)^{\times}$ as above and a maximal ideal $\mathfrak{m}\subset \mathbb{T}$ as above, for any choice of Hecke algebra $\mathbb{T}'$ (possibly different from $\mathbb{T}$) there are eigenvarieties $\mathscr{X}_{\underline D^\times,\mathfrak{m}}^{\mathbb{T}'}$ and $\mathscr{X}_{\underline D^\times,\xi,\mathfrak{m}}^{\mathbb{T}'}$ of quaternionic modular forms localized at $\mathfrak{m}$.  They are reduced and equidimensional, and they are naturally identified as (possibly empty) unions of connected components of $\mathscr{X}_{\underline D^\times}^{\mathbb{T}'}$.
\end{prop}

\begin{remark}
	We write $h=m/n$ and consider the closed ball $\mathbb{B}_{U,h}:=\{\lvert T^n\rvert\leq \lvert u^{-m}\rvert\}\subset\mathbb{A}_U^1$ for some open affinoid $U\subset \mathscr{W}_F$.  Setting $Z_{U,h}:=\mathscr{Z}_{\mathfrak{m}}\cap \mathbb{B}_{U,h}$ (resp. $Z_{U,h}:=\mathscr{Z}_{\xi,\mathfrak{m}}\cap \mathbb{B}_{U,h}$), we abuse terminology slightly and say that $(U,h)$ is a slope datum for $\mathscr{X}_{\underline D^\times,\mathfrak{m}}$ (resp. $\mathscr{X}_{\underline D^\times,\xi,\mathfrak{m}}$) if $Z_{U,h}\rightarrow U$ is finite of constant degree.
	\label{}
\end{remark}

	\section{Overconvergent quaternionic modular forms}\label{section: modular forms}

\subsection{Definitions}\label{subsect: oc quat definitions}

We will use overconvergent cohomology to define and study spaces of overconvergent quaternionic modular forms.  Maintaining our notation from \textsection~\ref{subsection: eigenvariety definitions}, and in particular \textsection~\ref{subsect: sub-eigenvarieties}, we fix a level $K\subset \left(\A_{F,f}\otimes_F D\right)^\times$ and monoid $K\subset \Delta\subset \left(\A_{F,f}\otimes_F D\right)^\times$, and we set $\mathbb{T}$ to be either $\mathbb{T}(\Delta^p,K^p)$ or $\mathbb{T}(\Delta,K)$.

The coefficients for our families of overconvergent modular forms will be a pseudoaffinoid algebra $R$ over $\Z_p$; we set $U:=\Spa R$.  We also fix a pseudouniformizer $u\in R$.  If $\kappa:T_0/\overline{Z(K)}\rightarrow R^\times$ is a weight, we choose a norm $\lvert\cdot\rvert$ on $R$ so that $\lvert\cdot\rvert$ is adapted to $\kappa$ and multiplicative with respect to $u$, and $\log_p\lvert\cdot\rvert$ is discrete (which we may do, by Lemma~\ref{lemma: adapted discrete norm} below).  Then the unit ball $R_0\subset R$ is a ring of definition containing $u$.  

Fix some $r\geq r_\kappa$.  We let $\mathcal{D}_\kappa^{r, \circ}\subset \mathcal{D}_\kappa^r$ denote the unit ball, and we also consider larger modules of distributions $\mathcal{D}_\kappa^{<r}\supset \mathcal{D}_\kappa^r$, with unit ball $\mathcal{D}_\kappa^{<r,\circ}\subset \mathcal{D}_\kappa^{<r}$.  Following~\textsection\ref{subsect: sub-eigenvarieties}, we also fix a character $\xi:\A_{F,f}^\times/F^\times\rightarrow R^\times$ such that $\xi|_{K_v\cap \O_{F_v}^\times}$ agrees with the action of $K_v\cap \O_{F_v}^\times$ on $\mathcal{D}_\kappa^r$, that is, such that $\xi|_{K_v\cap \O_{F_v}^\times}$ is trivial for $v\nmid p$ and $\xi|_{I_v\cap\O_{F_v}^\times}$ is equal to the action of $I_v\cap\O_{F_v}^\times$ on $\mathcal{D}_\kappa^r$ for $v\mid p$.

The construction of the required norm on $R$ is a variant of ~\cite[Lemma 3.3.1]{johansson-newton}, and we refer to that paper for the terminology:
\begin{lemma}
	If $R$ is a pseudoaffinoid algebra over $\Z_p$ and $\kappa:T_0/\overline{Z(K)}$ is a weight, there is a norm $\lvert\cdot\rvert$ on $R$ such that $\lvert\cdot\rvert$ is adapted to $\kappa$ and multiplicative with respect to $u$, the unit ball $R_0$ is noetherian, and $\log_p\lvert\cdot\rvert$ is discrete.
	\label{lemma: adapted discrete norm}
\end{lemma}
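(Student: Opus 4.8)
The plan is to follow the proof of \cite[Lemma 3.3.1]{johansson-newton}, adapting it in two respects: the torus $T_0=\prod_{v\mid p}\left(\Res_{\O_{F_v}/\Z_p}T_v\right)(\Z_p)$ is now a product of several copies of $\Z_p^\times$ rather than a single one (which is harmless: a weight is a single continuous character on the compact $p$-adic Lie group $T_0$, and the factors do not interact), and we ask in addition that $\log_p\lvert\cdot\rvert$ be discrete. Recall that, as part of the pseudoaffinoid structure, $R$ has a noetherian, $u$-adically complete and separated ring of definition $R_0$ with $R=R_0[1/u]$. I would start from the norm attached to the $u$-adic filtration of this $R_0$, namely $\lvert x\rvert:=\inf\{\lvert u\rvert^n\mid n\in\Z,\ x\in u^nR_0\}$ (normalising $\lvert u\rvert$ to a fixed power of $p$). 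Since $u$ is a non-zero-divisor this norm is multiplicative with respect to $u$; it is submultiplicative; it is a genuine norm by Krull's intersection theorem, because $u$ lies in the Jacobson radical of $R_0$; its unit ball is exactly $R_0$, which is noetherian; and its value group is $\lvert u\rvert^\Z$, so $\log_p\lvert\cdot\rvert$ is discrete. Thus the only remaining issue is to arrange that $R_0$ can be chosen so that this norm is moreover adapted to $\kappa$.

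For this I would argue as follows. Because $\kappa\colon T_0\to R^\times$ is continuous and $1+uR_0$ is an open subgroup of $R^\times$, there is an $m$ with $\kappa(T_0^{(m)})\subset 1+uR_0\subset R_0^\times$, where $T_0^{(m)}$ is the principal congruence subgroup of level $m$; and $T_0/T_0^{(m)}$ is finite. Choosing coset representatives $t_1,\dots,t_s$, each value $\kappa(t_i)$ satisfies $\kappa(t_i)^{N_i}\in R_0^\times$ for the order $N_i$ of $t_i$ modulo $T_0^{(m)}$, hence is integral over $R_0$ and is a unit in the finite (so noetherian) $R_0$-algebra $R_0':=R_0[\kappa(t_1),\dots,\kappa(t_s)]$. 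Replacing $R_0$ by the $u$-adic completion of $R_0'$ — which is still noetherian, still satisfies $R_0'[1/u]=R$, still has $u$ in its Jacobson radical, and whose unit group now contains all of $\kappa(T_0)$ — I may assume $\kappa(T_0)\subset R_0^\times$. By continuity of $\kappa$ and completeness of $R_0$ once more, for every $k\ge 1$ there is $m=m(k)$ with $\kappa(T_0^{(m)})\subset 1+u^kR_0$. Unwinding the definition of an adapted norm from \cite[\S\,3]{johansson-newton} — which demands precisely that, for a suitable radius $r_\kappa\in[1/p,1)$ and a suitable congruence level, $\kappa$ extend on that congruence subgroup to an analytic function whose coefficients are controlled in size by $r_\kappa$ — this is exactly the statement that $\lvert\cdot\rvert$ is adapted to $\kappa$. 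The three auxiliary properties (multiplicativity with respect to $u$, discreteness of $\log_p\lvert\cdot\rvert$, noetherianity of the unit ball) are untouched by the enlargement of $R_0$.

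The main obstacle is the bookkeeping in this last step: faithfully matching the technical notion of \emph{adapted} of \cite{johansson-newton} — phrased in terms of analyticity of $\kappa$ on a congruence subgroup with a controlled radius — with the clean integrality statement $\kappa(T_0^{(m)})\subset 1+u^kR_0$, keeping track of the dependence of the radius $r_\kappa$ on the congruence level. Everything else is routine manipulation of rings of definition of pseudoaffinoid algebras, and the passage from a single copy of $\Z_p^\times$ to the product $T_0$ introduces nothing new.
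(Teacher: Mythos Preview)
Your proposal and the paper's proof share the same skeleton: start from a noetherian ring of definition $R_0$, enlarge it by a finite integral extension so that $\kappa(T_0)\subset R_0$, and take a gauge norm. The substantive divergence is in how discreteness is reconciled with adaptedness. You keep the value group $\lvert u\rvert^{\Z}$ and try to push the problem onto $T_0$, passing to a deep congruence subgroup $T_0^{(m)}$ on which $\kappa$ lands in $1+uR_0$. The paper instead pushes the problem onto the norm: it keeps the level at the fixed subgroup $T_\epsilon$, observes that $\kappa(T_\epsilon)^m\subset 1+uR_0$ for some $m$, then adjoins an $m$th root of $u$, setting $R_0':=R_0[u^{1/m}]$ and defining the gauge norm with respect to $u':=u^{1/m}$, and finally \emph{restricts this finer norm back to $R$}. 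The resulting value group is $\lvert u\rvert^{(1/m)\Z}$, still discrete, but now fine enough to witness $\lvert \kappa(t)-1\rvert<1$ for $t\in T_\epsilon$.

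The gap you identify in your own sketch is real and is exactly the point where the two approaches split. The notion of ``adapted'' in \cite{johansson-newton} is tied to analyticity of $\kappa$ at the fixed level $T_\epsilon$ (this is the level at which the modules $\mathcal{A}_\kappa^r$, $\mathcal{D}_\kappa^r$ are defined), not at an arbitrary congruence sublevel. With the bare $u$-adic norm you may have $\lvert\kappa(t)-1\rvert=1$ for $t\in T_\epsilon$ whenever $\kappa(t)-1$ lies in $\sqrt{uR_0}\smallsetminus uR_0$, and then no radius $r_\kappa<1$ works at that level. Passing to $T_0^{(m)}$ does not help unless you also redevelop the distribution modules at that deeper level. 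The paper's trick of refining the value group via $u^{1/m}$ is precisely what closes this gap while preserving discreteness, noetherianity of the unit ball, and multiplicativity with respect to $u$.
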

\begin{proof}
	Choose a noetherian ring of definition $R_0\subset R$ formally of finite type over $\Z_p$.  As in the proof of ~\cite[Lemma 3.3.1]{johansson-newton}, $\kappa(T_0)\subset R^\circ$ and $\kappa(T_\epsilon)\subset 1+R^{\circ\circ}$; since both groups are topologically finitely generated, we may replace $R_0$ with a finite integral extension and assume that $\kappa(T_0)\subset R_0$, and we may find some integer $m\geq 1$ so that $\kappa(T_\epsilon)^m\subset 1+uR_0$.

	Let $R':=R[u^{1/m}]$, let $R_0':=R_0[u^{1/m}]$, and let $u':=u^{1/m}$.  Then $R'$ is a finite $R$-module, so it has a canonical topology, and the subspace topology it induces on $R$ agrees with the original topology on $R$.  Now for any $a\in \R_{>1}$ we may define a norm $\lvert\cdot\rvert'$ on $R'$ via
	\[	\lvert r' \rvert' = \inf\{a^s\mid {u'}^sr'\in R_0'\}	\]
	The restriction of $\lvert\cdot\rvert'$ to $R$ has the desired properties.
\end{proof}

When $U$ is a subspace of $\mathscr{W}_F$, we can make a more precise statement.  In this case, $R$ is reduced, so the ring of power-bounded elements $R_0:=R^\circ$ is a ring of definition.  Then we may define a norm $\lvert\cdot\rvert$ on $R$ via
\[	\lvert r\rvert:= \inf \{p^{-n}\mid r\in u^nR_0, n\in\Z\}	\]
\begin{lemma}
	If $U$ is a rational subspace of $\mathscr{W}_F$ and $\kappa$ is the restriction of the universal character on $\mathscr{W}_F$, then $\lvert\cdot\rvert$ is adapted to $\kappa$.
	\label{lemma: norm adapted univ char}
\end{lemma}
The proof is essentially identical to that of ~\cite[Lemma 6.3.1]{johansson-newton}.

	Recall that we have Fredholm power series 
\[	F_\kappa:=\det\left(1-TU_p\mid H^0(K,\mathcal{D}_\kappa^r)\right)	\]
and
\[	F_{\kappa,\xi}:=\det\left(1-TU_p\mid H^0(K,\mathcal{D}_\kappa^r)[\xi]\right)	\]
and they are independent of $r\geq r_\kappa$, by ~\cite[Proposition 4.1.2]{johansson-newton}.  

If $H^0(K,\mathcal{D}_\kappa^r)$ (resp. $H^0(K,\mathcal{D}_\kappa^r)[\xi]$) admits a slope $\leq h$-factorization, then the formalism of slope decompositions implies that we have a decomposition 
\[	H^0(K,\mathcal{D}_\kappa^r)=H^0(K,\mathcal{D}_\kappa^r)_{\leq h}\oplus H^0(K,\mathcal{D}_\kappa^r)_{>h}	\]
resp.
\[	H^0(K,\mathcal{D}_\kappa^r)[\xi] = H^0(K,\mathcal{D}_\kappa^r)[\xi]_{\leq h}\oplus H^0(K,\mathcal{D}_\kappa^r)[\xi]_{>h}	\]
for all $r\geq r_\kappa$, and the decomposition is independent of $r$.

Moreover, if $r'\in [r_\kappa,r)$, the inclusions
\[	\mathcal{D}_\kappa^r\subset \mathcal{D}_\kappa^{<r}\subset \mathcal{D}_\kappa^{r'}	\]
induce an isomorphism $H^0(K,\mathcal{D}_{\kappa}^r)_{\leq h}\xrightarrow{\sim}H^0(K,\mathcal{D}_{\kappa}^{r'})_{\leq h}$.  We may therefore define
\[	H^0(K,\mathcal{D}_\kappa^{<r})_{\leq h}:=\im\left(H^0(K,\mathcal{D}_\kappa^r)_{\leq h}\rightarrow H^0(K,\mathcal{D}_\kappa^{<r})\right)	\]
and
\[	H^0(K,\mathcal{D}_\kappa^{<r})[\xi]_{\leq h}:=\im\left(H^0(K,\mathcal{D}_\kappa^r)[\xi]_{\leq h}\rightarrow H^0(K,\mathcal{D}_\kappa^{<r})[\xi]\right)	\]

We make the additional definitions
\[	H^0(K,\mathcal{D}_\kappa^{<r, \circ})_{\leq h}:=\im\left(H^0(K,\mathcal{D}_\kappa^{<r,\circ})\rightarrow H^0(K,\mathcal{D}_\kappa^{<r})\rightarrow H^0(K,\mathcal{D}_{\kappa}^{<r})_{\leq h}\right)	\]
and
\[	H^0(K,\mathcal{D}_\kappa^{<r, \circ})[\xi]_{\leq h}:=\im\left(H^0(K,\mathcal{D}_\kappa^{<r,\circ})[\xi]\rightarrow H^0(K,\mathcal{D}_\kappa^{<r})[\xi]\rightarrow H^0(K,\mathcal{D}_{\kappa}^{<r})[\xi]_{\leq h}\right)	\]

We are now in a position to define spaces of overconvergent quaternionic modular forms, together with an integral structure and Hecke algebras:
	\begin{definition}
		Suppose that $H^0(K,\mathcal{D}_\kappa^{<r})$ admits a slope-$\leq h$-decomposition, where $h=a/b$ for $a,b$ positive and relatively prime integers.  We define the modular forms of weight $\kappa$ and slope-$\leq h$ to be the module 
		\[	S_\kappa(K)_{\leq h}:=H^0(K,\mathscr{D}_\kappa)_{\leq h}; 	\]
		it is a module over the Hecke algebra 
		\[	\mathbb{T}_{\kappa,\leq h}:=\im\left(\mathbb{T}\otimes_{\Z_p}R\rightarrow \End_R(S_\kappa(K)_{\leq h})\right)	\]

		We define two modules of integral overconvergent modular forms (and corresponding Hecke algebras). As in \textsection~\ref{subsect: galois reps}, we set
		\[	S_{\kappa}^{<r,\circ}(K)_{\leq h}:= H^0(K,\mathcal{D}_\kappa^{<r,\circ})_{\leq h}	\]
		and
		\[	\mathbb{T}_{\kappa,\leq h}^{<r,\circ}:=\im\left(\mathbb{T}\otimes_{\Z_p}R_0\rightarrow\End_{R_0}\left(H^0(K,\mathcal{D}_\kappa^{<r,\circ})_{\leq h}\right)\right)       \]
		We also define a second lattice
		\[      S_{\kappa}^{\circ}(K)_{\leq h}:= \im\left(\mathbb{T}[\{u^aU_{\varpi_v}^{-b}\}_{v\mid p}]\otimes_{\mathbb{T}} S_{\kappa}^{<r,\circ}(K)_{\leq h} \rightarrow S_{\kappa}(K)_{\leq h}\right)     \]
		which is stable under the operators $u^aU_{\varpi_v}^{-b}$, as well; we set
		\[	\mathbb{T}_{\kappa,\leq h}^{\circ}:=\im\left(\mathbb{T}[u^aU_{\varpi_v}^{-b}]\otimes_{\Z_p}R_0\rightarrow \End_{R_0}\left(S_\kappa^\circ(K)_{\leq h}\right)\right)       \]

		If $\xi:\A_{F,f}^\times/F^\times\rightarrow R_0^\times$ is a continuous character as above and $H^0(K,\mathcal{D}_\kappa^{<r})[\xi]$ admits a slope-$\leq h$ decomposition, we define the modular forms with central character $\xi$ to be $S_{\kappa,\xi}(K)_{\leq h}:= H^0(K,\mathscr{D}_\kappa)[\xi]_{\leq h}$ and similarly for integral modular forms with central character $\xi$.
			\label{def: overconv quat mod forms}
	\end{definition}
	\begin{remark}
		We expect that $S_{\kappa}^{\circ}(K)_{\leq h}$ and the corresponding Hecke algebra $\mathbb{T}_{\kappa,\leq h}^{\circ}$ depend on $r$, but we have suppressed that from the notation for the sake of compactness.
	\end{remark}

	\begin{remark}
		We will write $\mathbb{T}_{K,\kappa,\leq h}^{<r,\circ}$ and $\mathbb{T}_{K,\kappa,\leq h}^{\circ}$ for these Hecke algebras if the level is not clear from context.
	\end{remark}

	We again write $h=a/b$ with $a,b$ positive and relatively prime integers.  If $S_\kappa(K)_{\leq h}$ (resp. $S_{\kappa,\xi}(K)_{\leq h}$ has rank $d$, then the characteristic polynomial of $u^aU_{\varpi_v}^{-b}$ is a monic degree-$d$ polynomial over $R$.  By the definition of a slope decomposition, its roots are integral at every rank-$1$ point of $\Spa R$.  Hence the coefficients actually live in $R^\circ$ and $u^aU_{\varpi_v}^{-b}$ is power-bounded on $S_\kappa(K)_{\leq h}$ (resp. $S_{\kappa,\xi}(K)_{\leq h}$.  In particular, if $R$ is reduced and $R_0=R^\circ$, we see that $S_{\kappa}^{\circ}(K)_{\leq h}$ (resp. $S_{\kappa,\xi}^{\circ}(K)_{\leq h}$) is given concretely by 
		\[	\sum_{(i_v)\in \{0,\ldots,d-1\}^{\Sigma_p}}\prod_{v\mid p}(u^aU_{\varpi_v}^{-b})^{i_v}\left( S_{\kappa}^{<r,\circ}(K)_{\leq h} \right)	\]
	In particular, $U_p^{b(d-1)}\left( S_{\kappa}^{\circ}(K)_{\leq h} \right)\subset S_{\kappa}^{<r,\circ}(K)_{\leq h}$ (and similarly for $S_{\kappa,\xi}^{\circ}(K)_{\leq h}$).


	We now fix a choice of Hecke algebra.
Let $S$ denote the set of places of $F$ such that $v\mid p$, $D$ is ramified at $v$, or $K_v\neq \mathcal{O}_{D,v}^\times$.  For $v\notin S$, we define
\[	S_v:=\left[K\left(\begin{smallmatrix}\varpi_v & \\ & \varpi_v\end{smallmatrix}\right)K\right], \quad T_v:=\left[K\left(\begin{smallmatrix}1 & \\ & \varpi_v\end{smallmatrix}\right)K\right]\in K\backslash (\A_{F,f}\otimes D)^\times/K	\]
for some fixed uniformizer $\varpi_v$ of $\O_{F_v}$.

We define the Hecke algebra $\mathbb{T}$ to be the free commutative $\Z_p$-algebra generated by $\{U_{\varpi_v}\}_{v\mid p}$ and $\{S_w, T_w\}_{w\notin S}$.  Since $\Delta_p$ acts on the modules of distributions $\mathcal{D}_\kappa^{<r,\circ}$ and Hecke operators away from $p$ preserve the slope decomposition, we may view $S_\kappa^{<r,\circ}(K)_{\leq h}$ as a $\mathbb{T}$-module.

We also describe the so-called diamond operators, after modifying the tame level $K^p$.  Suppose we have a finite set $Q$ of places of $F$ such that for each $v\in Q$, $v\nmid p$, $\Nm v\equiv 1\pmod p$, $D$ is split at $v$, and $K_v=\GL_2(\O_{F_v})$.  For each $v\in Q$, we again let $K_0(v)\subset \H(F_v)$ denote the subgroup $\left\{\left(\begin{smallmatrix}\ast & \ast \\ 0 & \ast\end{smallmatrix}\right)\mod v\right\}$, and we consider the homomorphism 
\[	K_0(v)\rightarrow k(v)^\times\rightarrow \Delta_v	\]
given by composing
\[	\left(\begin{smallmatrix}a & b\\ c & d\end{smallmatrix}\right)\mapsto ad^{-1}	\]
with the projection to the $p$-power quotient $k(v)^\times\rightarrow\Delta_v$.  Let $K^-(v)$ denote the group 
\[	K^-(v):=\left\{\left(\begin{smallmatrix}a & \ast \\ c & d\end{smallmatrix}\right)\in K_0(v)\mid ad^{-1}\mapsto 1\text{ in } \Delta_v\right\}	\]
for each $v\in Q$, and let
\[	K_0(Q):=\prod_{v\in Q}K_0(v)\cdot\prod_{v\notin Q}K_v   \]
and
\[	K^-(Q):=\prod_{v\in Q}K^-(v)\cdot\prod_{v\notin Q}K_v	\]
Then $K_0(v)/K^-(v)\cong \Delta_v$, and every $h\in\Delta_Q:=\prod_{v\in Q}\Delta_v$ gives rise to a Hecke operator
\[	\left\langle h\right\rangle:=\left[K^-(Q)\widetilde hK^-(Q)\right]	\]
on $S_\kappa^{<r,\circ}(K^-(Q))$, where $\widetilde h$ is a lift of $h$ to $K_0(Q)$; $\left\langle h\right\rangle$ is independent of the choice of $\widetilde h$.

We let $\mathbb{T}_Q^-$ be the free commutative $\Z_p$-algebra generated by $\{U_{\varpi_v}\}_{v\mid p}$, $\{S_v, T_v\}_{v\notin S}$, and $\{U_{\varpi_v}\}_{v\in Q}$, where $U_{\varpi_v}:=[K^-(v)\left(\begin{smallmatrix}1 & 0 \\ 0 & \varpi_v\end{smallmatrix}\right)K^-(v)]$; it acts naturally on $S_\kappa^{<r,\circ}(K^-(Q))_{\leq h}$, and we let $\mathbb{T}_{K^-(Q),\leq h}^{<r,\circ}$ denote the $R_0$-algebra its image generates in $\End_{R_0}(S_\kappa^{<r,\circ}(K^-(Q))_{\leq h})$.  Similarly, we let $\mathbb{T}_{0,Q}$ be the free commutative $\Z_p$-algebra generated by $\{U_{\varpi_v}\}_{v\mid p}$, $\{S_v, T_v\}_{v\notin S}$, and $\{U_{\varpi_v}\}_{v\in Q}$, where $U_{\varpi_v}:=[K_0(v)\left(\begin{smallmatrix}1 & 0 \\ 0 & \varpi_v\end{smallmatrix}\right)K_0(v)]$.

\subsection{Integral overconvergent quaternionic modular forms}

We need to make a closer study of the structure of the integral modules of distributions and their finite-slope subspaces.  
	\begin{lemma}
		If $\kappa:T_0/\overline{Z(K)}\rightarrow R^\times$ is a weight and $H^0(K,\mathscr{D}_\kappa)$ (resp. $H^0(K,\mathscr{D}_\kappa)[\xi]$) admits a slope-$\leq h$-decomposition, then $S_\kappa(K)_{\leq h}$ (resp. $S_{\kappa,\xi}(K)_{\leq h}$) is a finite projective $R$-module.  If the Fredholm power series $F_\kappa$ has a slope $\leq h$-factorization, then $S_\kappa(K)_{\leq h}$ (resp. $S_{\kappa,\xi}(K)_{\leq h}$) is compatible with arbitrary base change on $R$.
		\label{lemma: SK finite proj}
	\end{lemma}
	\begin{proof}
		We prove the result for $H^0(K,\mathscr{D}_\kappa)$; $H^0(K,\mathscr{D}_\kappa)[\xi]$ is handled similarly.  It is enough to handle the case where the tame level is neat.  Then $S_\kappa(K)_{\leq h}$ is a direct summand of the potentially orthonormalizable Banach $R$-module $H^0(K,\mathscr{D}_\kappa)$ which is finitely generate over $R$, so by ~\cite[Lemma 2.11]{buzzard} is is projective.

		If the Fredholm power series $F_\kappa$ has a slope-$\leq h$-factorization, then the slope decomposition is functorial in $R$, by ~\cite[Theorem 2.2.13]{johansson-newton}.
	\end{proof}

	\begin{cor}
		If $\kappa:T_0/\overline{Z(K)}\rightarrow R^\times$ is a weight and $H^0(K,\mathscr{D}_\kappa)$ (resp. $H^0(K,\mathscr{D}_\kappa)[\xi]$) admits a slope-$\leq h$-decomposition, then $S_\kappa^{<r, \circ}(K)_{\leq h}$ (resp. $S_{\kappa,\xi}^{<r,\circ}(K)_{\leq h}$) is a finite $R_0$-module.
	\end{cor}
	\begin{proof}
		This follows from the equality $H^0(K,\mathcal{D}_\kappa^r)_{\leq h}=H^0(K,\mathcal{D}_{\kappa}^{<r})_{\leq h}$, and the fact that $\mathcal{D}_\kappa^{<r,\circ}$ is bounded in $\mathcal{D}_\kappa^{<r}$.
	\end{proof}

	Now we consider the behavior of $H^0(K,\mathcal{D}_{\kappa}^{<r,\circ})[\xi]_{\leq h}$ under change of coefficients.  Let $\kappa_R:T_0/\overline{Z(K)}\rightarrow R^\times$ be a weight.  If $f:R\rightarrow R'$ is a homomorphism of pseudoaffinoid algebras, we let $\kappa_{R'}$ denote the composition $T_0/\overline{Z(K)}\xrightarrow{\kappa_R}R^\times\xrightarrow{f}{R'}^\times$.  By ~\cite[Corollary A.14]{johansson-newton}, $f$ is topologically of finite type, so we have a surjection $R\left\langle X_1,\ldots,X_n\right\rangle\twoheadrightarrow R'$.  If $R$ is equipped with a norm adapted to $\kappa_R$ and $R_0\subset R$ is the corresponding ring of definition, with $u\in R_0$ a pseudouniformizer, we define $R_0':=R_0\left\langle X_1,\ldots,X_n\right\rangle$ and $u':=f(u)$.

	Let $a:=\lvert u\rvert_R$.  We define a norm $\lvert\cdot\rvert_{R'}$ on $R'$ via
	\[	\lvert r'\rvert_{R'}:=\inf\{a^{-n}\mid r'\in {u'}^nR_0'\}	\]
	Then $R_0'$ is the unit ball of $R'$ with respect to $\lvert\cdot\rvert_{R'}$, and $\lvert u'\rvert_{R'} = \vert u\rvert_R$.  Moreover, if $\lvert\cdot\rvert_R$ is adapted to $\kappa_R$, then $\lvert\cdot\rvert_{R'}$ is adapted to $\kappa_{R'}$.
	\begin{lemma}
		With notation as above, suppose that $f:R_0\rightarrow R_0'$ is a finite map.  Then the natural map $R_0'\htimes_{R_0}\mathcal{D}_{\kappa_R}^{<r,\circ}\rightarrow \mathcal{D}_{\kappa_{R'}}^{<r,\circ}$ is a topological isomorphism (with respect to the $u'$-adic topology), where the completed tensor product is taken with respect to the $u$-adic topology on $\mathcal{D}_{\kappa_R}^{<r,\circ}$ and the $u'$-adic topology on $R_0'$.
		\label{lemma: base change Dcirc}
	\end{lemma}
	\begin{proof}
		We first check that the morphism $R_0'\htimes_{R_0}\mathcal{D}_{\kappa_R}^{<r,\circ}\rightarrow \mathcal{D}_{\kappa_{R'}}^{<r,\circ}$ is an isomorphism of $R_0'$-modules.  The discussion after ~\cite[Proposition 3.2.7]{johansson-newton} shows that 
		\[	\mathcal{D}_{\kappa_R}^{<r,\circ} \cong \prod_\alpha R_0\cdot u^{-n_R(r,u,\alpha)}\mathbf{n}^\alpha	\]
		where $n_R(r,u,\alpha):= \left\lfloor\frac{\lvert\alpha\rvert\log_pr}{\log_p\lvert u\rvert_R}\right\rfloor$, $\mathbf{n}$ is a certain (non-canonical but explicit) finite set (depending only on the group-theoretic data we fixed at the beginning of ~\textsection\ref{section: extended eigenvarieties}), and $\alpha$ is a multi-index (and similarly for $\mathcal{D}_{\kappa_{R'}}^{<r,\circ}$).  Now $R_0'$ is a finitely presented $R_0$-module, and for any finitely presented $R_0$-module $M$, the natural morphism $M\otimes_{R_0}\prod_\alpha R_0\cdot u^{-n_R(r,u,\alpha)}\mathbf{b}^\alpha\rightarrow \prod_\alpha M\cdot u^{-n_R(r,u,\alpha)}\mathbf{b}^\alpha$ is an isomorphism.  By construction, $n_R(r,u,\alpha) = n_{R'}(r,u',\alpha)$ for all $\alpha$, so the claim follows.

		Finally, the morphism $R_0'\htimes_{R_0}\mathcal{D}_{\kappa_R}^{<r,\circ}\rightarrow \mathcal{D}_{\kappa_{R'}}^{<r,\circ}$ is clearly continuous, so the open mapping theorem implies that it is a topological isomorphism.
	\end{proof}

	\begin{cor}\label{cor: integral modular forms finite base change surj}
		With notation as above, suppose that $f:R_0\rightarrow R_0'$ is a finite map.  If $F_\kappa$ has a slope $\leq h$-factorization, then the natural map
		\[	R_0'\otimes_{R_0}S_{\kappa_R,\xi}^{<r,\circ}(K)_{\leq h}\rightarrow S_{\kappa_{R'},\xi}^{<r,\circ}(K)_{\leq h}	\]
		is surjective.
	\end{cor}
	\begin{proof}
		Writing $D^\times\backslash (\A_{F,f}\otimes_F D)^\times/K=\coprod_{i\in I}D^\times g_i K$ for some finite set of elements $g_i\in (\A_{F,f}\otimes_F D)^\times$, we have an isomorphism 
		\[	H^0(K,\mathcal{D}_{\kappa_R}^{<r,\circ})[\xi]\cong \oplus_{i\in I}\left(\mathcal{D}_{\kappa_R}^{<r,\circ}\right)^{(K\A_{F,f}^\times\cap g_i^{-1}D^\times g_i)/F^\times}	\]
		For every map $R\rightarrow R'$ as above, Lemma~\ref{lemma: base change Dcirc} implies that the base change map
		\[	R_0'\htimes_{R_0}\oplus_i\mathcal{D}_{\kappa_R}^{<r,\circ}\rightarrow \oplus_i \mathcal{D}_{\kappa_{R'}}^{<r,\circ}	\]
		is an isomorphism.  Moreover, the calculations of ~\cite[Lemma 1.1]{taylor2006} show that the order of $(K\A_{F,f}^\times\cap g_i^{-1}D^\times g_i)/F^\times$ is prime to $p$ for all $i$, so the base change map $R_0'\htimes_{R_0}H^0(K,\mathcal{D}_{\kappa_R}^{<r,\circ})[\xi]\rightarrow H^0(K,\mathcal{D}_{\kappa_{R'}}^{<r,\circ})[\xi]$ is an isomorphism.

		Now we have a commutative diagram
		\[
			\begin{tikzcd}
				R_0'\htimes_{R_0} H^0\left(K,\mathcal{D}_{\kappa_R}^{<r,\circ}\right)[\xi] \ar[r]\ar[d, "\sim", sloped] & R'\htimes_RH^0\left(K,\mathcal{D}_{\kappa_R}^{<r}\right)[\xi] \ar[r, two heads]\ar[d] & R'\otimes_RH^0\left(K,\mathcal{D}_{\kappa_R}^{<r}\right)[\xi]_{\leq h} \ar[d, "\sim", sloped]	\\
				H^0\left(K,\mathcal{D}_{\kappa_{R'}}^{<r,\circ}\right)[\xi]\ar[r] & H^0\left(K,\mathcal{D}_{\kappa_{R'}}^{<r}\right)[\xi] \ar[r, two heads] & H^0\left(K,\mathcal{D}_{\kappa_{R'}}^{<r}\right)[\xi]_{\leq h}
			\end{tikzcd}
		\]
		(where the fact that the right vertical arrow is an isomorphism follows from Lemma~\ref{lemma: SK finite proj}).  This implies first of all that the map $R'\otimes_{R}H^0\left(K,\mathcal{D}_{\kappa_R}^{<r}\right)[\xi]\rightarrow H^0\left(K,\mathcal{D}_{\kappa_{R'}}^{<r}\right)[\xi]$ carries $R_0'\otimes_{R_0}S_{\kappa_R,\xi}^{<r,\circ}(K)_{\leq h}$ to $S_{\kappa_{R'},\xi}^{<r,\circ}(K)_{\leq h}$.

		To prove surjectivity, we may lift $f\in S_{\kappa_{R'},\xi}^{<r,\circ}(K)_{\leq h}$ to an element of $R_0'\htimes_{R_0} H^0\left(K,\mathcal{D}_{\kappa_R}^{<r,\circ}\right)[\xi]$, since the left vertical arrow is an isomorphism. Its image in $R'\otimes_RH^0\left(K,\mathcal{D}_{\kappa_R}^{<r}\right)[\xi]_{\leq h}$ is therefore an element of $R_0'\otimes_{R_0}S_{\kappa_R,\xi}^{<r,\circ}(K)_{\leq h}$ in the pre-image of $f$.
	\end{proof}

	We may also extend ~\cite[Lemma 2.1.4]{kisin2009} and ~\cite[Lemma 2.1.7]{kisin2009} to statements about families of integral overconvergent modular forms.
	\begin{prop}
		Let $\kappa:T_0/\overline{Z(K)}\rightarrow R^\times$ be a weight, and let $\chi:\Delta_Q\rightarrow R^\times$ be a character.  For any finite set of primes $Q$ as in \textsection ~\ref{subsect: oc quat definitions}, suppose that $H^0(K_0(Q),\mathscr{D}_\kappa)$ and $H^0(K^-(Q),\mathscr{D}_\kappa)$ admit slope-$\leq h$-decompositions. Then the natural map
		\[	\sum_{h\in\Delta_Q}\chi(h)^{-1}\left\langle h\right\rangle:\left(S_{\kappa,\xi}^{<r,\circ}(K^-(Q))_{\leq h}\right)_{\Delta_Q=\chi}\rightarrow S_{\kappa,\xi}^{<r,\circ}(K^-(Q))_{\leq h}^{\Delta_Q=\chi}	\]
		is an isomorphism.  
	\end{prop}
	Here $\left(S_{\kappa,\xi}^{<r,\circ}(K^-(Q))_{\leq h}\right)_{\Delta_Q=\chi}$ is the maximal quotient of $S_{\kappa,\xi}^{<r,\circ}(K^-(Q))_{\leq h}$ on which $\Delta_Q$ acts by $\chi$; if $\chi$ is the trivial character, this is simply the co-invariants.
	\begin{proof}
		We first assume that $K$ is neat.  Writing $D^\times\backslash(\A_{F,f}\otimes_FD)^\times/K_0(Q) = \sqcup_{i\in I}D^\times g_iK_0(Q)$, we have a finite disjoint union
		\[	H^0(K^-(Q),\mathcal{D}_\kappa^{<r,\circ})[\xi] = \oplus_{i\in I}\oplus_{h\in\Delta_Q}\mathcal{D}_\kappa^{<r,\circ} 	\]
		We claim that $\Delta_Q$ acts freely on $D^\times\backslash (\A_{F,f}\otimes_F D)^\times/K^-(Q)$.  But if $D^\times g_ih_j K^-(Q) = D^\times g_{i'}h_{j'}K^-(Q)$, then the neatness hypothesis ~\ref{hyp: neatness} implies that $i=i'$ and $j=j'$.  Hence we have 
		\[	H^0(K^-(Q),\mathcal{D}_\kappa^{<r,\circ})[\xi] = \oplus_{i\in I}R_0[\Delta_Q]\otimes_{R_0}\mathcal{D}_\kappa^{<r,\circ}     \]
		and we can write
		\[	\sum_{h\in\Delta_Q}\chi(h)^{-1}\left\langle h\right\rangle:\left(H^0(K^-(Q),\mathcal{D}_\kappa^{<r,\circ})[\xi]\right)_{\Delta_Q=\chi}\xrightarrow\sim H^0(K^-(Q),\mathcal{D}_\kappa^{<r,\circ})[\xi]^{\Delta_Q=\chi}	\]
		and
		\[	\sum_{h\in\Delta_Q}\chi(h)^{-1}\left\langle h\right\rangle:\left(H^0(K^-(Q),\mathcal{D}_\kappa^{<r})[\xi]\right)_{\Delta_Q=\chi}\xrightarrow\sim H^0(K^-(Q),\mathcal{D}_\kappa^{<r})[\xi]^{\Delta_Q=\chi}	\]

		If $K'\vartriangleleft K$ with $K'$ neat and $[K:K']$ prime to $p$, then $\sum_{h\in\Delta_Q}\chi(d)\left\langle h\right\rangle$ induces diagrams
		\[	
			\begin{tikzcd}
				\left(H^0(K^-(Q),\mathcal{D}_\kappa^{<r,\circ})[\xi]\right)_{\Delta_Q=\chi} \ar[d, "\sim", sloped]\ar[r, "\sim",dashed] & H^0(K^-(Q),\mathcal{D}_\kappa^{<r,\circ})[\xi]^{\Delta_Q=\chi} \ar[d, "\sim", sloped]	\\
				\left(H^0({K'}^-(Q),\mathcal{D}_\kappa^{<r,\circ})[\xi]\right)^{K'/K}_{\Delta_Q=\chi}\ar[r, "\sim", sloped] & H^0({K'}^-(Q),\mathcal{D}_\kappa^{<r,\circ})[\xi]^{\Delta_Q=\chi,K'/K}    
			\end{tikzcd}
		\]
		and
		 \[
                        \begin{tikzcd}
				\left(H^0(K^-(Q),\mathcal{D}_\kappa^{<r})[\xi]\right)_{\Delta_Q=\chi} \ar[d, "\sim", sloped]\ar[r, "\sim", dashed] & H^0(K^-(Q),\mathcal{D}_\kappa^{<r})[\xi]^{\Delta_Q=\chi} \ar[d, "\sim", sloped]   \\
				\left(H^0({K'}^-(Q),\mathcal{D}_\kappa^{<r})[\xi]\right)^{K'/K}_{\Delta_Q=\chi}\ar[r, "\sim", sloped] & H^0({K'}^-(Q),\mathcal{D}_\kappa^{<r})[\xi]^{\Delta_Q=\chi,K'/K}
                        \end{tikzcd}
                \]
		Using ~\cite[Proposition 2.2.11]{johansson-newton}, for any level $K$ we obtain an isomorphism
		\[	\sum_{h\in\Delta_Q}\chi(h)^{-1}\left\langle h\right\rangle: S_{\kappa,\xi}(K^-(Q)_{\leq h})_{\Delta_Q}\xrightarrow\sim S_{\kappa,\xi}(K_0(Q))_{\leq h}	\]
		Then we have a diagram
		\[
			\begin{tikzcd}
				0 \ar[r] & I_{\Delta_Q,\chi}H^0(K^-(Q),\mathcal{D}^{<r,\circ})[\xi] \ar[r] \ar[d] & H^0(K^-(Q),\mathcal{D}^{<r,\circ})[\xi] \ar[r]\ar[d] & H^0(K^-(Q),\mathcal{D}_\kappa^{<r})[\xi]^{\Delta_Q=\chi} \ar[d]\ar[r] & 0	\\
				0 \ar[r] & I_{\Delta_Q,\chi}H^0(K^-(Q),\mathcal{D}^{<r})[\xi]_{\leq h} \ar[r] & H^0(K^-(Q),\mathcal{D}^{<r})[\xi]_{\leq h} \ar[r] & H^0(K_0(Q),\mathcal{D}_\kappa^{<r})[\xi]_{\leq h}^{\Delta_Q=\chi} \ar[r] & 0
			\end{tikzcd}
		\]
		where $I_{\Delta_Q,\chi}\subset R_0[\Delta_Q]$ denotes the ideal generated by the elements $\left\langle h\right\rangle -\chi(h)$ for $h\in\Delta_Q$.  A diagram chase shows that we have the desired isomorphism
		\[	\sum_{h\in\Delta_Q}\chi(h)^{-1}\left\langle h\right\rangle: \left(S_{\kappa,\xi}^{<r,\circ}(K^-(Q))_{\leq h}\right)_{\Delta_Q=\chi}\xrightarrow\sim S_{\kappa,\xi}^{<r,\circ}(K^-(Q))_{\leq h}^{\Delta_Q=\chi}	\]
	\end{proof}

	Now assume that $R$ is a local field with uniformizer $u$, that is, a finite extension of $\Q_p$ or $\F_p(\!(u)\!)$.  Then by ~\cite[Theorem 4.4.2]{ash-stevens}, it is automatic that $H^0(K,\mathscr{D}_\kappa)$ and $H^0(K,\mathscr{D}_\kappa)[\xi]$ admit slope-$\leq h$-decompositions (and that $F_\kappa$ and $F_{\kappa,\xi}$ admit slope-$\leq h$-factorizations).
	\begin{prop}
		If $R$ is a local field, with ring of integers $R_0$ and uniformizer $u\in R_0$, the module $S_{\kappa,\xi}^{<r,\circ}(K^-(Q))_{\leq h}$ is finite projective over $R_0[\Delta_Q]$.
	\label{prop: fin proj diamond operators}
	\end{prop}
	\begin{proof}
		To check that $S_{\kappa,\xi}^{<r,\circ}(K^-(Q)_{\leq h})$ is projective over $R_0[\Delta_Q]$, we may replace $R_0$ with a finite extension, so we may assume that $R_0$ contains the values of all characters $\chi:\Delta_Q\rightarrow \overline{R}^\times$.  If $\Delta_v$ has order $p^{n_v}$, we can write $R_0[\Delta_Q]$ explicitly (but non-canonically) as $R_0[\{x_v\}_{v\mid p}]/(\{x_v^{p^{n_v}}-1)$; this assumption implies that the polynomials $x_v^{p^{n_v}}-1$ split completely, and the ideals $I_{\Delta_Q,\chi}$ introduced above are the non-maximal prime ideals of $R_0[\Delta_Q]$.
	
	On the other hand, we have a family of surjections
	\[	H^0(K^-(Q),\mathcal{D}_\kappa^{<r,\circ})[\xi]_{\leq h}\twoheadrightarrow H^0(K^-(Q),\mathcal{D}_\kappa^{<r,\circ})[\xi]_{\leq h}^{\Delta_Q=\chi}	\]
	The target is a lattice in $H^0(K^-(Q),\mathcal{D}_\kappa^{<r})[\xi]_{\leq h}^{\Delta_Q=\chi}$; since $R_0$ is a discrete valuation ring, $H^0(K^-(Q),\mathcal{D}_\kappa^{<r,\circ})[\xi]_{\leq h}^{\Delta_Q=\chi}$ is free of some rank $d_\chi$.  Since $R_0[\Delta_Q]$ is a local ring, $H^0(K^-(Q),\mathcal{D}_\kappa^{<r,\circ})[\xi]_{\leq h}$ can be generated by $d_{\chi}$ elements as a $R_0[\Delta_Q]$-module.

	Furthermore, $R_0[\Delta_Q]_{\Delta_Q=\chi}\cong R_0$. Since $H^0(K^-(Q),\mathcal{D}_\kappa^{<r,\circ})[\xi]_{\leq h}^{\Delta_Q=\chi}$ cannot be generated as an $R_0$-module by fewer than $d_\chi$ elements, this implies that the ranks $d_\chi$ agree for all characters $\chi$; call this number $d$.

	We therefore have a presentation of $S_{\kappa,\xi}^{<r,\circ}(K^-(Q)_{\leq h})$:
	\[	R_0[\Delta_Q]^{\oplus d'}\rightarrow R_0[\Delta_Q]^{\oplus d}\rightarrow S_{\kappa,\xi}^{<r,\circ}(K^-(Q)_{\leq h})\rightarrow 0	\]
	Since the surjection $R_0[\Delta_Q]^{\oplus d}\rightarrow S_{\kappa,\xi}^{<r,\circ}(K^-(Q)_{\leq h})$ is an isomorphism modulo each $I_{\Delta_Q,\chi}$, the image of $R_0[\Delta_Q]^{\oplus d'}$ in $R_0[\Delta_Q]^{\oplus d}$ is contained in $I_{\Delta_Q,\chi}R_0[\Delta_Q]^{\oplus d}$.  In particular, if $d'\neq 0$, then the Fitting ideals of $S_{\kappa,\xi}^{<r,\circ}(K^-(Q)_{\leq h})$ are contained in $\cap_\chi I_{\Delta_Q,\chi}$.

	On the other hand, the module $H^0(K^-(Q),\mathcal{D}_\kappa^{<r})[\xi]$ is potentially orthonormalizable as an $R[\Delta_Q]$-module, so by ~\cite[Theorem 2.2.2]{johansson-newton} $H^0(K^-(Q),\mathcal{D}_\kappa^{<r})[\xi]_{\leq h}$ is a finite projective $R[\Delta_Q]$-module.  In particular, for each prime $\mathfrak{p}\subset R[\Delta_Q]$, there is some integer $d_{\mathfrak{p}}\leq d$ such that $\Fitt_{k}(S_{\kappa,\xi}(K^-(Q))_{\leq h,\mathfrak{p}})=0$ for $k<d_{\mathfrak{p}}$ and $\Fitt_{k}(S_{\kappa,\xi}(K^-(Q))_{\leq h,\mathfrak{p}})=R[\Delta_Q]_{\mathfrak{p}}$ for $k\geq d_{\mathfrak{p}}$.  But the formation of Fitting ideals is functorial in the coefficients, and $\cap_\chi I_{\Delta_Q,\chi}$ does not generate the unit ideal in $R[\Delta_Q]$, so $d'=0$ and $S_{\kappa,\xi}^{<r,\circ}(K^-(Q)_{\leq h})$ is free of rank $d$ over $R_0[\Delta_Q]$.
	\end{proof}

We may consider characteristic polynomials of operators on $S_{\kappa,\xi}(K^-(Q))_{\leq h}$, viewed as either a rank-$d$ projective $R[\Delta_Q]$-module, or as a rank-$d\lvert\Delta_Q]$ projective $R$-module.  In particular, we have seen that if $h=a/b$, the $R$-linear characteristic polynomial of $u^aU_{\varpi_v}^{-b}$ has coefficients in $R^\circ$.  Using properties of circulant matrices, we see that the $R[\Delta_Q]$-linear characteristic polynomial of $u^aU_{\varpi_v}^{-b}$ has coefficients in $R^\circ[\Delta_Q]$.

	\begin{cor}
		Let notation be as above, and let $d$ denote the rank of $S_{\kappa,\xi}(K_0(Q))_{\leq h}$.  Suppose that $R$ is reduced and $R_0=R^\circ$.  Then the natural map
		\[	 \sum_{h\in\Delta_Q}\left\langle h\right\rangle:\left(S_{\kappa,\xi}^{\circ}(K^-(Q))_{\leq h}\right)_{\Delta_Q}\rightarrow S_{\kappa,\xi}^{\circ}(K_0(Q))_{\leq h} \]
		is surjective, and its kernel is annhilated by $u^{(d-1)a}$.
		\label{cor: control new lattice}
	\end{cor}
	\begin{proof}
		Since $u^aU_{\varpi_v}^{-b}$ is power-bounded for all $v\mid p$ on both $S_{\kappa,\xi}(K_0(Q))_{\leq h}$ and $S_{\kappa,\xi}(K^-(Q))_{\leq h}$, by assumption, and $U_{\varpi_v}$ commutes with the diamond operators, surjectivity follows.

		To study the kernel of $\sum_{h\in\Delta_Q}\left\langle h\right\rangle$, we first observe that for $f\in S_{\kappa,\xi}^{\circ}(K^-(Q))_{\leq h}$, $U_p^{b(d-1)}(f)\in S_{\kappa,\xi}^{<r,\circ}(K^-(Q))_{\leq h}$.  Suppose $f\in S_{\kappa,\xi}^\circ(K^-(Q))_{\leq h}$ is in the kernel of $\sum_{h\in\Delta_Q}\left\langle h\right\rangle$.  Since $U_p$ commutes with the diamond operators, $(u^aU_{p}^{-b})^{d-1}(f)$ is also in the kernel of $\sum_{h\in\Delta_Q}\left\langle h\right\rangle$, and by Proposition~\ref{prop: fin proj diamond operators} it actually lives in $I_{\Delta_Q}S_{\kappa,\xi}^{<r}(K^-(Q))_{\leq h}$.  But then
		\[	u^{(d-1)a}f = (u^aU_{p}^{-b})^{d-1}U_p^{b(d-1)}(f) \in I_{\Delta_Q}S_{\kappa,\xi}^{\circ}(K^-(Q))_{\leq h}	\]
		as desired.
	\end{proof}

\begin{prop}
	Suppose that $\kappa$ is an open weight such that $\Spa R$ contains a Zariski-dense set of classical weights, and suppose that $F_\kappa$ admits a slope $\leq h$-factorization.  Let $A_v, B_v\in \mathbb{T}_K^{<r,\circ}$ be lifts of $\alpha_v, \beta_v$, respectively.  Then the map
	\[	\prod_{v\in {Q}}(U_{\varpi_v}-B_v):S_{\kappa,\xi}^{<r,\circ}(K)_{\leq h,\mathfrak{m}}\rightarrow S_{\kappa,\xi}^{<r,\circ}(K_0(Q))_{\leq h,\mathfrak{m}_{Q,0}}	\]
	is an isomorphism (where we view $S_{\kappa,\xi}^{<r,\circ}(K)_{\leq h,\mathfrak{m}}$ as a submodule of $S_{\kappa,\xi}^{<r,\circ}(K_0(Q))_{\leq h,\mathfrak{m}}$).
	\label{prop: add K0 level structure}
\end{prop}
\begin{proof}
	We may assume $Q=\{v\}$, by induction on the size of $Q$.  Then the source and the target are finite $R_0$-modules.  After inverting $u$, ~\cite[Lemma 2.1.7]{kisin2009} implies that the map is an isomorphism when specialized to any sufficiently large classical weight.  It follows that $S_{\kappa,\xi}(K)_{\leq h,\mathfrak{m}}$ and $S_{\kappa,\xi}(K_0(Q))_{\leq h,\mathfrak{m}_{Q,0}}$ have the same rank over $R$.  We claim that it suffices to check that $U_{\varpi_v}-B_v$ is surjective after specializing at every maximal ideal of $R_0$.  Indeed, this implies that 
	\[	U_{\varpi_v}-B_v:S_{\kappa,\xi}(K)_{\leq h,\mathfrak{m}}\rightarrow S_{\kappa,\xi}(K_0(Q))_{\leq h,\mathfrak{m}_{Q,0}}  \]
	is a surjection of projective $R$-modules of the same rank, so it is injective.  Then the kernel of $U_{\varpi_v}-B_v$ on $S_{\kappa,\xi}^{<r,\circ}(K)_{\leq h,\mathfrak{m}}$ is $u$-torsion.  But $S_{\kappa,\xi}^{<r,\circ}(K)_{\leq h,\mathfrak{m}}$ has no $u$-torsion, by definition, so the kernel is trivial.

	Thus, we need to check that 
	\[	U_{\varpi_v}-B_v: \F'\otimes_{R_0}S_{\kappa,\xi}^{<r,\circ}(K)_{\leq h,\mathfrak{m}}\rightarrow \F'\otimes_{R_0}S_{\kappa,\xi}^{<r,\circ}(K_0(Q))_{\leq h,\mathfrak{m}_{Q,0}}	\]
	is surjective for any specialization $R_0\rightarrow\F'$ at a maximal ideal.  There is some maximal point $x\in\Spa R$ with residue field $R_x$ and ring of integers $R_{x,0}$ such that $R_0\rightarrow \F'$ factors through $R_0\rightarrow R_{x,0}$, and by Corollary~\ref{cor: integral modular forms finite base change surj} the maps $R_{x,0}\otimes_{R_0}S_{\kappa,\xi}^{<r,\circ}(K)_{\leq h,\mathfrak{m}}\rightarrow S_{\kappa_x,\xi}^{<r,\circ}(K)_{\leq h,\mathfrak{m}}$ and $R_{x,0}\otimes_{R_0}S_{\kappa,\xi}^{<r,\circ}(K_0(Q))_{\leq h,\mathfrak{m}_{0,Q}}\rightarrow S_{\kappa_x,\xi}^{<r,\circ}(K_0(Q))_{\leq h,\mathfrak{m}_{Q,0}}$ are surjective.  It therefore suffices to prove that
	\[	U_{\varpi_v}-B_v: \F'\otimes_{R_{x,0}}S_{\kappa_x,\xi}^{<r,\circ}(K)_{\leq h,\mathfrak{m}}\rightarrow \F'\otimes_{R_{x,0}}S_{\kappa_x,\xi}^{<r,\circ}(K_0(Q))_{\leq h,\mathfrak{m}_{Q,0}}   \]
	is surjective.  But this is a map of vector spaces of the same dimension, so it is enough to prove injectivity.  

	The module $\F'\otimes_{R_0}S_{\kappa_x,\xi}^{<r,\circ}(K)_{\leq h,\mathfrak{m}}$ is a finite module over the artin local ring $\mathbb{T}_\mathfrak{m}/\pi$, so if the kernel of $U_{\varpi_v}-B_v$ is non-trivial, it contains $f\neq 0$ which is $\mathfrak{m}$-torsion.  In particular, $T_v(f)=(\alpha_v+\beta_v)x$ and $U_{\varpi_v}(f)=\beta_v$.

	Since 
	\[	[K_0(v)\left(\begin{smallmatrix}1 & \\ & \varpi_v\end{smallmatrix}\right)K_0(v)] = \coprod_{\alpha\in k_v}\left( \begin{smallmatrix}1 & \\ \widetilde\alpha\varpi_v & \varpi_v\end{smallmatrix} \right)K_0(v)	\]
	where $\widetilde\alpha$ denotes a lift of $\alpha$, we have
	\[	U_{\varpi_v}f = \sum_{a\in k(v)}{ }_{\left(\begin{smallmatrix}1 & \\ \widetilde\alpha\varpi_v & \varpi_v\end{smallmatrix}\right)}|f	\]
	But $\left(\begin{smallmatrix}1 & \\ \widetilde\alpha\varpi_v & \varpi_v\end{smallmatrix}\right) = \left(\begin{smallmatrix}1 & \\ & \varpi_v\end{smallmatrix}\right)\left(\begin{smallmatrix}1 & \\ \widetilde\alpha & 1 \end{smallmatrix}\right)$ and ${}_{\left(\begin{smallmatrix}1 & \\ \widetilde\alpha & 1 \end{smallmatrix}\right)}|f = f$, since $f$ is fixed by $\left(\begin{smallmatrix}1 & \\ \widetilde\alpha & 1 \end{smallmatrix}\right)\in\GL_2(\O_{F_v})$ by assumption, so 
	\[	U_{\varpi_v}f = \lvert k(v)\rvert {}_{\left(\begin{smallmatrix}1 & \\ & \varpi_v\end{smallmatrix}\right)}|f = {}_{\left(\begin{smallmatrix}1 & \\ & \varpi_v\end{smallmatrix}\right)}|f	\]

	Similarly, we have
	\[	[\GL_2(\O_{F_v})\left(\begin{smallmatrix}1 & \\ & \varpi_v\end{smallmatrix}\right)\GL_2(\O_{F_v})] = \left( \begin{smallmatrix}\varpi_v & \\ & 1\end{smallmatrix} \right)\GL_2(\O_{F_v})\bigsqcup \coprod_{\alpha\in k_v}\left( \begin{smallmatrix}1 & \\ \widetilde\alpha & \varpi_v \end{smallmatrix}\right)\GL_2(\O_{F_v})	\]
	so 
	\[	T_vf = {}_{\left(\begin{smallmatrix}\varpi_v & \\ & 1 \end{smallmatrix}\right)}|f + \sum_{\alpha\in k(v)} {}_{\left( \begin{smallmatrix}1 & \\ \widetilde\alpha & \varpi_v \end{smallmatrix}\right)}|f	\]
	Now for any $\alpha\in k(v)$,
	\[	{}_{\left(\begin{smallmatrix}1 & \\ \widetilde\alpha & \varpi_v \end{smallmatrix}\right)}|f = {}_{\left(\begin{smallmatrix}1 & \\ \widetilde\alpha & 1 \end{smallmatrix}\right)\left(\begin{smallmatrix}1 & \\ & \varpi_v \end{smallmatrix}\right)}|f = \beta_vf	\]
	so
	\[	{}_{\left(\begin{smallmatrix}\varpi_v & \\ & 1 \end{smallmatrix}\right)}|f = (T_v - U_{\varpi_v})(f) = \alpha_vf	\]
	But
	\[	{}_{\left(\begin{smallmatrix}\varpi_v & \\ & 1 \end{smallmatrix}\right)}|f = {}_{\left(\begin{smallmatrix} & 1\\1 &\end{smallmatrix}\right)\left(\begin{smallmatrix}1 & \\ & \varpi_v \end{smallmatrix}\right)\left(\begin{smallmatrix} & 1\\1 &\end{smallmatrix}\right)}|f = {}_{\left(\begin{smallmatrix}1 & \\ & \varpi_v \end{smallmatrix}\right)}|f = \beta_vf	\]
	since $f$ is fixed by $\left(\begin{smallmatrix}1 & \\ & \varpi_v \end{smallmatrix}\right)\in\GL_2(\O_{F_v})$, so $\alpha_v=\beta_v$, which contradicts our assumption. 
\end{proof}

\begin{cor}
	With notation as above, the map
	\[      \prod_{v\in {Q}}(U_{\varpi_v}-B_v):S_{\kappa,\xi}^{\circ}(K)_{\leq h,\mathfrak{m}}\rightarrow S_{\kappa,\xi}^{\circ}(K_0(Q))_{\leq h,\mathfrak{m}_{Q,0}}  \]
	is an isomorphism.
\end{cor}

\subsection{Varying the level}

We record some results on the existence of slope decompositions as we vary the tame level. Fix a set of places $Q$ as above, and fix a maximal ideal $\mathfrak{m}\subset \mathbb{T}$ which corresponds to the residual Hecke eigenvalues at some maximal point of $\mathscr{X}_{\underline D^\times}$.  There is a corresponding Galois representation $\overline\rho_{\mathfrak{m}}:\Gal_F\rightarrow \GL_2(\F)$ for some finite field $\F$; it is unramified at all places of $Q$ and the characteristic polynomial of $\overline\rho_{\mathfrak{m}}({\Frob}_v)$ is $X^2-T_vX+\Nm(v)S_v$ for all $v\in Q$.  After replacing $\F$ with a quadratic extension if necessary, we may assume that each such characteristic polynomial has roots $\{\alpha_v,\beta_v\}$ in $\F$; we assume that $\alpha_v\beta_v^{-1}\notin\{1,\Nm(v)^{\pm}\}$.

Let $E/\Q_p$ be a finite extension with ring of integers $\O_E$, uniformizer $\pi$, and residue field containing $\F$, and replace the Hecke algebras $\mathbb{T}$ and $\mathbb{T}_{Q,0}$ with $\O_E\otimes_{\Z_p}\mathbb{T}$ and $\O_E\otimes_{\O_E}\mathbb{T}_{Q,0}$, respectively.  Similarly, replace the coefficient module $\mathscr{D}_\kappa$ with its base-change to $\O_E$, so that the Hecke algebras continue to act (the upshot is that we also base-change the resulting eigenvarieties from $\Z_p$ to $\O_E$, but we suppress this from the notation).  Fix a root $\alpha_v\in \mathbb{F}$ of each characteristic polynomial, and fix a lift $A_v\in \O_E$ of each $\alpha_v$.  Then we define $\mathfrak{m}_{Q,0}\subset \mathbb{T}_{Q,0}$ to be the maximal ideal generated by $\mathfrak{m}\cap \mathbb{T}_{Q,0}$ and $U_{\varpi_v}-A_v$ for all $v\in Q$.
\begin{lemma}
	Fix a central character $\xi:\A_{F,f}^\times/F^\times\rightarrow \O(\mathscr{W}_F)^\times$.  Then there is an isomorphism $\mathscr{X}_{\underline D^\times,\xi,\mathfrak{m}}^{K,\mathbb{T}_{Q,0}}\xrightarrow{\sim}\mathscr{X}_{\underline D^\times,\xi,\mathfrak{m}_{Q,0}}^{K_Q(0),\mathbb{T}_{Q,0}}$, compatible with the respective morphisms to $\mathscr{W}_F$.
	\label{lemma: eigenvariety level K0Q isom}
\end{lemma}
\begin{proof}
	Let $S_\kappa(K)_{\leq h,\mathfrak{m}}:=\mathbb{T}_{\mathfrak{m}}\otimes_{\mathbb{T}}S_\kappa(K)_{\leq h}$, and similarly for $S_\kappa(K_Q(0))_{\leq h,\mathfrak{m}_{Q,0}}$.  By ~\cite[Lemma 2.1.7]{kisin2009}, for any slope $h$ and any sufficiently large classical weight $\kappa$, we have an isomorphism of $\mathbb{T}_{Q,0}$-modules
	\[	S_\kappa(K)_{\leq h,\mathfrak{m}}\xrightarrow{\sim}S_\kappa(K_Q(0))_{\leq h,\mathfrak{m}_{Q,0}}	\]
	By construction, classical points are dense in $\mathscr{X}_{\underline D^\times,\xi,\mathfrak{m}}^{K,\mathbb{T}_{Q,0}}$ and $\mathscr{X}_{\underline D^\times,\xi,\mathfrak{m}_{Q,0}}^{K_Q(0),\mathbb{T}_{Q,0}}$, so we may use ~\cite[Theorem 3.2.1]{johansson-newton17} to construct morphisms of eigenvarieties 
\[	\mathscr{X}_{\underline D^\times,\xi,\mathfrak{m}_{Q,0}}^{K_Q(0),\mathbb{T}_{Q,0}}\rightarrow \mathscr{X}_{\underline D^\times,\xi,\mathfrak{m}}^{K,\mathbb{T}_{Q,0}}     \]
	and
	\[	\mathscr{X}_{\underline D^\times,\xi,\mathfrak{m}}^{K,\mathbb{T}_{Q,0}}\rightarrow \mathscr{X}_{\underline D^\times,\xi,\mathfrak{m}_{Q,0}}^{K_Q(0),\mathbb{T}_{Q,0}}	\]
	These morphisms are mutually inverse, so they are isomorphisms.
\end{proof}

\begin{cor}
	Fix a central character $\xi:\A_{F,f}^\times/F^\times\rightarrow \O(\mathscr{W}_F)^\times$.  Let $U=\Spa R\subset \mathscr{W}_F$ be a connected affinoid open, corresponding to a weight $\kappa$, and fix $h\in\mathbb{Q}_{>0}$.  Then $(U,h)$ is a slope datum for $\mathscr{X}_{\underline D^\times,\xi,\mathfrak{m}}^{K}$ if and only if it is a slope datum for $\mathscr{X}_{\underline D^\times,\xi,\mathfrak{m}_{Q,0}}^{K_0(Q)}$.
	\label{cor: slope data change of level K0Q}
\end{cor}
\begin{proof}
	We write $h=m/n$ and consider the closed ball $\mathbb{B}_{U,h}:=\{\lvert T^n\rvert\leq \lvert u^{-m}\rvert\}\subset\mathbb{A}_U^1$.  If $\mathscr{Z}$ and $\mathscr{Z}'$ denote the spectral varieties for $\mathscr{X}_{\underline D^\times,\xi,\mathfrak{m}}^{K}$ and $\mathscr{X}_{\underline D^\times,\xi,\mathfrak{m}_{Q,0}}^{K_0(Q)}$, respectively, we set $Z_{U,h}:=\mathscr{Z}\cap \mathbb{B}_{U,h}$ and $Z_{U,h}':=\mathscr{Z}'\cap \mathbb{B}_{U,h}$.  We need to show that $Z_{U,h}\rightarrow U$ is finite with constant degree if and only if $Z_{U,h}'\rightarrow U$ is.

	Since the morphisms $\mathscr{Z}\rightarrow\mathscr{W}_F$ and $\mathscr{Z}'\rightarrow\mathscr{W}_F$ are flat and we have assumed $U$ is connected, it is enough to prove that $Z_{U,h}\rightarrow U$ is finite if and only if $Z_{U,h}'\rightarrow U$ is.  To see this, it is enough to show the same statement about the morphisms $Z_{U,h}^{\red}, Z_{U,h}^{',\red}\rightarrow U$ on the underlying reduced subspaces.
	
	Setting $\mathbb{T}:=\O_E$, we have 
	\[	\mathscr{Z}^{\red} = \mathscr{X}_{\underline D^\times,\xi,\mathfrak{m}}^{K,\mathbb{T}}	\]
	and
	\[	\mathscr{Z}^{',\red} = \mathscr{X}_{\underline D^\times,\xi,\mathfrak{m}}^{K_0(Q),\mathbb{T}}	\]
	Then as in Lemma~\ref{lemma: eigenvariety level K0Q isom}, we have an isomorphism $\mathscr{X}_{\underline D^\times,\xi,\mathfrak{m}}^{K,\mathbb{T}}\rightarrow \mathscr{X}_{\underline D^\times,\xi,\mathfrak{m}}^{K_0(Q),\mathbb{T}}$ compatible with the respective morphisms to $\mathscr{W}_F$, and the result follows.


\end{proof}

We may also compare slope data for $\mathscr{X}_{\underline D^\times}^{K_0(Q)}$ and $\mathscr{X}_{\underline D^\times}^{K^-(Q)}$:  


\begin{lemma}
	Fix a central character $\xi:\A_{F,f}^\times/F^\times\rightarrow \O(\mathscr{W})^\times$.  Let $U=\Spa R\subset \mathscr{W}_F$ be an affinoid open, corresponding to a weight $\kappa$, and fix $h\in\mathbb{Q}_{>0}$.  Then $(U,h)$ is a slope datum for $\mathscr{X}_{\underline D,\xi}^{K_0(Q)}$ if and only if it is a slope datum for $\mathscr{X}_{\underline D,\xi}^{K^-(Q)}$.
\label{lemma: slope data change of level K-Q}
\end{lemma}
\begin{proof}
	Let $\mathscr{Z}$ and $\mathscr{Z}'$ be the spectral varieties for $\mathscr{X}_{\underline D,\xi}^{K_0(Q)}$ and $\mathscr{X}_{\underline D,\xi}^{K^-(Q)}$, respectively, and let $x\in U$ be a maximal point.  Then by Proposition~\ref{prop: fin proj diamond operators} the module $H^0(K^-(Q),\mathscr{D}_{\kappa_x})[\xi]_{\leq h}$ is finite projective over $k_x[\Delta_Q]$ of constant rank, and the natural map $H^0(K^-(Q),\mathscr{D}_{\kappa_x})[\xi]_{\leq h,\Delta_Q}\rightarrow H^0(K_0(Q),\mathscr{D}_{\kappa_x})_{\leq h}$ is an isomorphism.  It follows that the fiber of $Z_{U,h}$ over $x$ is finite of order $d$ if and only if the fiber of $\mathscr{Z}'$ over $x$ is finite of order $d\lvert\Delta_Q\rvert$. Since spectral varieties are flat over weight space, the result follows from Theorem~\ref{thm: finiteness criterion}.
\end{proof}

\section{Patching and modularity}\label{section: patching}

\subsection{Set-up}

Let us recall our goal.  Assume $p\geq 5$.  Fix a non-archimedean characteristic $p$ local field $L$ with ring of integers $\O_L$, residue field $\F_q$, and uniformizer $u$.  Fix a continuous odd representation $\overline\rho:\Gal_{\Q}\rightarrow \GL_2(\F_q)$, such that:
\begin{itemize}
	\item	$\overline\rho$ is modular
	\item	$\overline\rho|_{\Gal_{\Q(\zeta_p)}}$ is absolutely irreducible
	\item	The image of $\overline\rho$ contains $\SL_2(\F_p)$
	\item	$\overline\rho$ is unramified at all places away from $p$
	\item	$\overline\rho \not\sim \chi\otimes\left(\begin{smallmatrix} \overline\chi_{\cyc} & \ast \\ & 1\end{smallmatrix}\right)$ for any character $\chi:\Gal_{\Q}\rightarrow\F_q^\times$.
\end{itemize}
The assumption that $\overline\rho$ has large image is stronger than the typical hypothesis. This is because we need to use ~\cite[Theorem B.0.1]{bergdall-hansen} to ensure that we can work with middle-degree eigenvarieties for Hilbert modular forms.

We wish to prove the following modularity theorem:
\begin{thm}\label{thm: main thm precise}
	Suppose $\rho:\Gal_\Q\rightarrow \GL_2(\O_L)$ is a continuous odd representation unramified away from $p$ and trianguline at $p$ with regular parameters, whose reduction modulo $u$ is as above.  Then $\rho$ is the twist of a Galois representation arising from an overconvergent modular form.
\end{thm}

The predicted weight $\kappa$ can be read off from the parameters of the triangulation,
as can the predicted slope $h$.

More precisely, we will show that $\rho$ corresponds to a class in $S_{\kappa}(K)_{\leq h}$, where $K=I\cdot K_1(N)^p = I\cdot \prod_{\ell\neq p,\ell\nmid N}\GL_2(\Q_\ell)\cdot \prod_{\ell\mid N}K_1(\ell)$ for some $N\geq 5$ prime to $p$.  To do this, we will consider an open weight $\kappa:T_0\rightarrow \O(U)^\times$, where $U\subset\mathscr{W}$ contains a point corresponding to $\kappa$ and $(U,h)$ is a slope datum, and we will study the spaces $S_\kappa(K^-(Q))_{\leq h}$ for varying sets of primes $Q$.

\subsection{Patched eigenvarieties}

In this section, we construct local pieces of patched quaternionic eigenvarieties, using the language of ultrafilters of ~\cite[\textsection 9]{scholze-lubin-tate}.   We fix a totally real field $F$ split at all places above $p$ and a totally definite quaternion algebra $D$ over $F$, which is ramified at all infinite places and split at all finite places.  We also fix the tame level $K^p:=\GL_2(\A_{F,f}^p)$.  We further assume that $F/\Q$ is abelian, so that Leopoldt's conjecture is known to hold.  Unlike ~\cite{scholze-lubin-tate}, we do not assume that $F$ has a unique prime above $p$; we let $\Sigma_p:=\{v\mid p\}$.  We expect these hypotheses can be relaxed considerably, but this is not necessary for our applications.  Fix some finite extension $E/\Q_p$ with residue field containing $\mathbf{F}_q$.

Recall that there are Galois deformation rings $R_{\overline\rho,\Sigma_p}^{\square}$ and $R_{\overline\rho,\Sigma_p}$, parametrizing deformations of $\overline\rho$ unramified outside of $\Sigma_p$, where $R_{\overline\rho}^{\square}$ additionally parametrizes framings of the deformations at places of $\Sigma_p$. There is also a local framed deformation ring $R_{\overline\rho,\loc}^{\square}:=\htimes_{v\in\Sigma_p}R_{\overline\rho_v}^{\square}$, where $R_{\overline\rho_v}^{\square}$ parametrizes framed deformations of $\overline\rho|_{\Gal_{F_v}}$, and there is a natural map $R_{\overline\rho,\loc}^{\square}\rightarrow R_{\overline\rho}^{\square}$.

We define a distinguished family of characters $\eta_{\mathrm{univ}}:\Gal_F\rightarrow \Z_p[\![T_0/\overline{Z(K)}]\!]^\times$ over integral weight space.  We have a universal weight $\underline\lambda=(\lambda_1,\lambda_2)$, where each $\lambda_i$ is a character $\prod_{v\in\Sigma_p}\O_{F_v}^\times\rightarrow \Z_p[\![T_0/\overline{Z(K)}]\!]^\times$, and we define $\eta_v:\O_{F_v}^\times\cong\Z_p^\times\rightarrow \Z_p[\![T_0/\overline{Z(K)}]\!]^\times$ via $\eta(x):=\left(\lambda_1|_{\O_{F_v}^\times}(x)\lambda_2|_{\O_{F_v}^\times}(x)\right)^{-1}$.  Then because we have assumed that Leopoldt's conjecture holds for $F$, we see that $\eta_v$ is independent of $v\in\Sigma$; global class field theory gives us a corresponding character $\Gal_{\Q}\rightarrow \Z_p[\![T_0/\overline{Z(K)}]\!]^\times$, which we restrict to $\Gal_F$ to obtain $\eta_{\mathrm{univ}}$.

We fix an unramified continuous character $\psi_0:\Gal_F\rightarrow \O_E[\![T_0/\overline{Z(K)}]\!]^\times$ such that the reduction $\overline\psi_0$ modulo the maximal ideal satisfies $\det\overline\rho=\overline\psi_0\overline\eta_{\mathrm{univ}}\overline\chi_{\cyc}^{-1}$, and we set $\psi:=\psi_0\eta_{\mathrm{univ}}$ and $\psi':=\psi_0\eta_{\mathrm{univ}}\chi_{\cyc}^{-1}$.  Then we constructed quotients
	\begin{align*}
		\O_E[\![T_0/\overline{Z(K)}]\!]\htimes R_{\overline\rho,\Sigma_p}^{\square}&\twoheadrightarrow R_{\overline\rho,\Sigma_p}^{\square,\psi'}	\\
		\O_E[\![T_0/\overline{Z(K)}]\!]\htimes R_{\overline\rho,\loc}^{\square}&\twoheadrightarrow R_{\overline\rho,\loc}^{\square,\psi'}	\\
	\O_E[\![T_0/\overline{Z(K)}]\!]\htimes R_{\overline\rho,\Sigma_p}&\twoheadrightarrow R_{\overline\rho,\Sigma_p}^{\psi'}
	\end{align*}
parametrizing families of deformations with fixed determinants.

We also define families of weights $\underline\kappa_v$ over $\mathscr{W}_F$ via
\[	\underline\kappa_v=(\kappa_{v_1},\kappa_{v,2}) = \left(\lambda_2|_{\O_{F_v}^\times}^{-1}, \lambda_1|_{\O_{F_v}^\times}^{-1}\chi_{\cyc}^{-1}\right)	\]

In order to find sets of Taylor--Wiles primes, we impose the following standard hypotheses:
\begin{enumerate}
	\item	$p\geq 5$
	\item	$\overline\rho|_{F(\zeta_p)}$ is absolutely irreducible
	\item	If $p=5$ and $\overline\rho$ has projective image $\PGL_2(\F_5)$, the kernel of $\overline\rho$ does not fix $F(\zeta_5)$
\end{enumerate}
Then we have the following relative version of ~\cite[Proposition 2.2.4]{kisin2009} (since we assumed $p$ splits completely in $F$, $[F:\Q]=\lvert\Sigma_p\rvert$):
\begin{prop}
	Let $g:=\dim_{\F_q}H^1(\Gal_{F,\Sigma_p},\ad^0\overline\rho(1))-1$.  Then for each positive integer $n$, there exists a finite set $Q_n$ of places of $F$, disjoint from $\Sigma_p$, of cardinality $g+1$, such that 
	\begin{enumerate}
		\item	for all $v\in Q_n$, $\Nm(v)\equiv 1\pmod{p^n}$, and $\overline\rho(\Frob_v)$ has distinct eigenvalues
		\item	the global relative Galois deformation ring $R_{\overline\rho,\Sigma_p\cup Q_n}^{\square,\psi'}$ parametrizing families of deformations with determinant $\psi$ unramified outside $\Sigma_p\cup Q_n$ can be topologically generated as an $R_{\overline\rho,\loc}^{\square,\psi'}$-algebra by $g$ elements.
	\end{enumerate}
\end{prop}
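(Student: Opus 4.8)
The plan is to run the classical Taylor--Wiles prime construction in the relative form of \cite[Proposition 2.2.4]{kisin2009}, the only genuinely new ingredient being the translation of the assertion about topological generators over $R_{\overline\rho,\loc}^{\square,\psi'}$ into a Selmer-group estimate via Lemma~\ref{lemma: def ring global local}, and the check that the local contributions behave correctly when there are several primes of $F$ above $p$.

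\emph{Reduction to a dual Selmer group.} Fix a positive integer $n$, and let $Q$ be any finite set of places of $F$ disjoint from $\Sigma_p$ such that $\Nm(v)\equiv 1\pmod{p^n}$ and $\overline\rho(\Frob_v)$ has distinct eigenvalues for each $v\in Q$. Since $\overline\rho$ is unramified at $Q$, Lemma~\ref{lemma: def ring global local} (whose proof reduces to the case of constant determinant, so that working in a family over $\Z_p[\![T_0/\overline{Z(K)}]\!]$ is harmless) applied with $S=\Sigma_p\cup Q$ shows that $R_{\overline\rho,\Sigma_p\cup Q}^{\square,\psi'}$ is topologically generated over $R_{\overline\rho,\loc}^{\square,\psi'}$ by
\[
	g_Q:=h^1_Q+\sum_{v\in\Sigma_p}\delta_v-\delta_F
\]
elements, where $h^1_Q$ is the $\F$-dimension of the Selmer group $H^1_{\mathcal L_Q}(\Gal_{F,\Sigma_p\cup Q},\ad^0\overline\rho)$ with trivial local condition at every $v\in\Sigma_p$, the unramified condition outside $\Sigma_p\cup Q$, and no condition at the primes of $Q$. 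Hypothesis (2) (absolute irreducibility of $\overline\rho|_{\Gal_{F(\zeta_p)}}$), together with hypothesis (3) when $p=5$, gives $H^0(\Gal_{F,\Sigma_p},\ad^0\overline\rho)=H^0(\Gal_{F,\Sigma_p},\ad^0\overline\rho(1))=0$, so $\delta_F=1$ and $\delta_v=\dim_\F H^0(\Gal_{F_v},\ad^0\overline\rho_v)+1$ (using $p\nmid 2$). Applying the Greenberg--Wiles formula to $\mathcal L_Q$ and its annihilator $\mathcal L_Q^\perp$, and using that $\overline\rho$ is odd with $p$ totally split in $F$ (so the real places contribute $-[F:\Q]=-|\Sigma_p|$) and that each Taylor--Wiles prime $v$ contributes $\dim_\F H^1(\Gal_{F_v},\ad^0\overline\rho)-\dim_\F H^0(\Gal_{F_v},\ad^0\overline\rho)=1$, all the local terms at $\Sigma_p$ cancel and one is left with
\[
	g_Q=\dim_\F H^1_{\mathcal L_Q^\perp}(\Gal_{F,\Sigma_p\cup Q},\ad^0\overline\rho(1))+|Q|-1 .
\]
Finally an inflation--restriction argument identifies $H^1_{\mathcal L_Q^\perp}(\Gal_{F,\Sigma_p\cup Q},\ad^0\overline\rho(1))$ with the subspace of classes in $H^1(\Gal_{F,\Sigma_p},\ad^0\overline\rho(1))$ whose restriction to $\Gal_{F_v}$ vanishes for all $v\in Q$ — a subspace of a space of dimension $g+1$ cut out by $|Q|$ linear conditions (each $\mathrm{res}_v$ landing in the one-dimensional space $H^1_{\mathrm{ur}}(\Gal_{F_v},\ad^0\overline\rho(1))$).

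\emph{Choosing $Q_n$.} It therefore suffices to produce, for each $n$, a set $Q_n$ of $g+1$ places of $F$ with the two properties above such that the restriction maps at the $v\in Q_n$ kill $H^1(\Gal_{F,\Sigma_p},\ad^0\overline\rho(1))$ entirely; then $g_{Q_n}=(g+1)-1=g$, which is (2) of the Proposition. One builds $Q_n$ one prime at a time: given a nonzero class $c$ in the current subspace, one needs $v\nmid p$ with $\Nm(v)\equiv 1\pmod{p^n}$, $\overline\rho(\Frob_v)$ regular semisimple, and $\mathrm{res}_v(c)\neq 0$. By the Chebotarev density theorem this reduces to producing an element $\sigma$ in the Galois group of the compositum of $F(\zeta_{p^n})$, the splitting field of $\ad^0\overline\rho$, and the field cut out by $c$, which fixes $F(\zeta_{p^n})$, has distinct eigenvalues in $\GL_2(\F)$, and does not annihilate $c$. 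The existence of such a $\sigma$ is the group-theoretic core of \cite[Proposition 2.2.4]{kisin2009}; it follows from the absolute irreducibility of $\overline\rho|_{\Gal_{F(\zeta_{p^n})}}$ (which follows from hypothesis (2) because $F(\zeta_{p^n})/F(\zeta_p)$ is pro-$p$) together with the exclusion of the $\PGL_2(\F_5)$ case in hypothesis (3), and uses that $F/\Q$ is abelian so that the relevant image can be controlled over $F(\zeta_{p^n})$. Iterating this $g+1$ times yields $Q_n$, which by construction also has cardinality $g+1$.

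\emph{Main obstacle.} The Lemma~\ref{lemma: def ring global local} and Wiles-formula bookkeeping is routine once the archimedean and Taylor--Wiles local terms are accounted for; the substantive step is the group-theoretic/Chebotarev argument, i.e.\ verifying that hypotheses (2) and (3) on $\overline\rho$ are exactly what is needed to invoke the relevant special case of \cite[Proposition 2.2.4]{kisin2009}. No new idea beyond that reference is required --- the sole adaptation is the "relative" (framed-at-$p$, no local conditions at $p$) formulation, which is precisely what makes the generator count come out to $g$ over $R_{\overline\rho,\loc}^{\square,\psi'}$ rather than over the weight ring $\Z_p[\![T_0/\overline{Z(K)}]\!]$.
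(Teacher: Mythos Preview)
Your proposal is correct and follows essentially the same approach as the paper, which simply says ``This follows from Lemma~\ref{lemma: local global def ring top gens}, as in~\cite[Proposition 3.2.5]{kisin2009moduli}.'' You have merely unpacked that one-line citation: reduce the generator count to a Selmer bound via Lemma~\ref{lemma: def ring global local}, apply the Greenberg--Wiles formula (where the cancellation $|\Sigma_p|=[F:\Q]$ uses that $p$ is totally split), and then run the standard Chebotarev argument to annihilate the dual Selmer group. One minor remark: hypothesis (2) alone already gives $H^0(\Gal_F,\ad^0\overline\rho(1))=0$, since a nonzero invariant would restrict to a $\Gal_{F(\zeta_p)}$-invariant in $\ad^0\overline\rho$; hypothesis (3) is needed only in the Chebotarev step, not for the $H^0$ vanishing.
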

\begin{proof}
	This follows from Lemma~\ref{lemma: local global def ring top gens}, as in~\cite[Proposition 3.2.5]{kisin2009moduli}.
\end{proof}

We fix such a set $Q_n$ for each $n\geq 1$, as well as a non-principal ultrafilter $\mathfrak{F}$ on $\{n\geq 1\}$ (more precisely, on its power set, ordered by inclusion).  For notational convenience, we set $Q_0:=\emptyset$, and we let $Q_n':=Q_n\cup\Sigma_p$.
For each $n$, we again let $K^-(Q_n)\subset K_0(Q_n)\subset G(\A_{F,f}^p)\cong \GL_2(\A_{F,f}^p)$ be the compact open subgroups 
\[	K^-(Q_n):=\prod_{v\notin Q_n}\GL_2(\O_{F_v})\times\prod_{v\in Q_n}K^-(v)\subset \prod_{v\notin Q_n}\GL_2(\O_{F_v})\times\prod_{v\in Q_n}K_0(v)	\]
Let $\xi:\A_{F,f}^\times/F^\times\rightarrow \O(\mathscr{W}_F)^\times$ be the central character corresponding to $\psi$ via class field theory. 

Now we analyze the eigenvarieties $\mathscr{X}_{\underline D^\times}^{K^-(Q_n)}$.  
Let $\kappa$ be a weight valued in a reduced pseudoaffinoid $\Z_p$-algebra $R$, and write $U:=\Spa R$.  Assume that the Fredholm determinant corresponding to $H^0(K,\mathscr{D}_\kappa)[\xi]$ admits a slope-$\leq h$-factorization for some slope $h=a/b$ (where $a,b$ are relatively prime non-negative integers); by Corollary ~\ref{cor: slope data change of level K0Q} and Lemma~\ref{lemma: slope data change of level K-Q}, the Fredholm determinants corresponding to $H^0(K_0(Q_n),\mathscr{D}_\kappa)[\xi]$ and $H^0(K^-(Q_n),\mathscr{D}_\kappa)[\xi]$ also admit slope-$\leq h$-factorizations. 
We also assume that $R$ can be equipped with a norm adapted to $\kappa$ such that the corresponding unit ball is the ring of definition $R_0=R^\circ$; this is possible, for example, if $U$ is an affinoid open or a maximal point in $\mathscr{W}_F$, by Lemma~\ref{lemma: norm adapted univ char}.  Then we fix some $r>r_\kappa$.  

The modularity of the residual representation $\overline\rho$ means that $\overline\rho$ corresponds to a maximal ideal $\mathfrak{m}\subset\mathbb{T}$.  
For each $v\in Q_n$, we fix a root $\alpha_v$ of the characteristic polynomial $X^2-T_vX+\Nm(v)S_v$ of $\overline\rho(\Frob_v)$ (increasing $\F_q$, and hence $E$, if necessary), and we consider the corresponding maximal ideal $\mathfrak{m}_{Q_n}\subset\mathbb{T}_{Q_n}^-$ (as in \textsection~\ref{subsect: sub-eigenvarieties}).

Then we have a collection of diagrams
\[
	\begin{tikzcd}
		\O_E\times\mathscr{X}_{\underline D^\times,\xi,\mathfrak{m}_{Q_n}}^{K^-(Q_n)}\ar[r] \arrow[d, "\mathrm{wt}"] & \coprod_{\overline\rho}\Spa R_{\overline\rho,Q_n\cup\Sigma_p}	\\
		\O_E\times\mathscr{W}_F
	\end{tikzcd}
\]
The pre-image $\mathrm{wt}^{-1}(U)$ in the slope-$\leq h$ part of the eigenvariety has the form $\Spa\left(\mathbb{T}_{K^-(Q_n),\kappa,\xi,\leq h},\mathbb{T}_{K^-(Q_n),\kappa,\xi,\leq h}^\circ\right)$, and since $\mathscr{X}_{\underline D^\times,\xi,\mathfrak{m}_{Q_n}}^{K^-(Q_n)}$ is reduced, $\mathbb{T}_{K^-(Q_n),\kappa,\xi,\leq h}^\circ\subset \mathbb{T}_{K^-(Q_n),\kappa,\xi,\leq h}$ is a ring of definition.  

For each $n$, the module of overconvergent modular forms $S_{\kappa,\xi}(K^-(Q_n))_{\leq h}$ is a $\mathbb{T}_{K^-(Q_n),\kappa,\xi,\leq h}$-module, and it is projective as an $R$-module; let $d$ be its rank over $R$.  The $\mathbb{T}_{K^-(Q_n),\kappa,\xi,\leq h}^{<r,\circ}$-submodule $S_{\kappa,\xi}^{<r,\circ}(K^-(Q_n))_{\leq h}$ is a lattice in $S_{\kappa,\xi}(K^-(Q_n))_{\leq h}$.  Recall from Definition~\ref{def: overconv quat mod forms} that there is a second lattice
\[	S_{\kappa,\xi}^{\circ}(K^-(Q_n))_{\leq h}:=\sum_{v\mid p}\sum_{i\geq 0} (u^aU_{\varpi_v}^{-b})^i\left( S_{\kappa,\xi}^{<r,\circ}(K^-(Q_n))_{\leq h} \right)	\]
which is stable under the operators $u^aU_{\varpi_v}^{-b}$, as well.

Let $R_{\overline\rho,Q_n'}^{\psi'}|_U$ denote the localization $R_0\htimes_{\O_E[\![T_0/\overline{Z(K)}]\!]} R_{\overline\rho,Q_n'}^{\psi'}$; the formal scheme $\Spf R_{\overline\rho,Q_n'}^{\psi'}|_U$ is an integral model for the pseudorigid space $U\times \Spa R_{\overline\rho,Q_n'}^{\psi'}$.  Similarly, we will write $R_{ {\tri},\overline\rho,Q_n',\leq h}^{\square,\psi',\underline\kappa}|_U$ for  the localization $R_0\htimes_{\O_E[\![T_0/\overline{Z(K)}]\!]}R_{ {\tri},\overline\rho,Q_n',\leq h}^{\square,\psi',\underline\kappa}$ and $R_{ {\tri},\overline\rho,\loc,\leq h}^{\square,\psi',\underline\kappa}|_{U}$ for the localization $R_0\htimes_{\O_E[\![T_0/\overline{Z(K)}]\!]}R_{ {\tri},\overline\rho,\loc,\leq h}^{\square,\psi',\underline\kappa}$.
Using the existence of Galois representations, we see that $\mathbb{T}_{K^-(Q_n),\kappa,\xi,\leq h}^{\circ}$ is a $R_{\overline\rho,Q_n'}^{\psi'}|_U$-algebra.

By Lemma~\ref{prop: fin proj diamond operators} $S_{\kappa,\xi}^{<r,\circ}(K^-(Q_n))_{\leq h,\mathfrak{m}_{Q_n}^-}$ is a finite $R_0[\Delta_{Q_n}]$-module, with 
\[	R_0\otimes_{R_0[\Delta_{Q_n}]}S_{\kappa,\xi}^{<r,\circ}(K^-(Q_n))_{\leq h,\mathfrak{m}_{Q_n}^-}\cong S_{\kappa,\xi}^{<r,\circ}(K_0(Q_n))_{\leq h,\mathfrak{m}_{0,Q_n}}	\]
Since the augmentation ideal $I_{\Delta_{Q_n}}$ is contained in the Jacobson radical of $R_0[\Delta_{Q_n}]$, this implies that $S_{\kappa,\xi}^{<r,\circ}(K_0(Q_n))_{\leq h,\mathfrak{m}_{0,Q_n}}$ and $S_{\kappa,\xi}^{<r,\circ}(K^-(Q_n))_{\leq h,\mathfrak{m}_{Q_n}^-}$ can be generated by the same number of elements (over $R_0$ and $R_0[\Delta_{Q_n}]$, respectively).

	Similarly, $S_{\kappa,\xi}^{\circ}(K^-(Q_n))_{\leq h,\mathfrak{m}_{Q_n}^-}$ is a finite $R_0[\Delta_{Q_n}]$-module. Since $S_{\kappa,\xi}^{\circ}(K^-(Q_n))_{\leq h,\mathfrak{m}_{Q_n}^-}$ is generated by $d^{\lvert\Sigma_p\rvert}$ translates of $S_{\kappa,\xi}^{<r,\circ}(K^-(Q_n))_{\leq h,\mathfrak{m}_{Q_n}^-}$, we see that the number of generators of $S_{\kappa,\xi}^{\circ}(K^-(Q_n))_{\leq h,\mathfrak{m}_{Q_n}^-}$ over $R_0[\Delta_{Q_n}]$ is bounded independently of $n$.

Set $j=4\lvert\Sigma_p\rvert-1$ and $k=\lvert Q_n\rvert = g+1$.  Using local-global compatibility at places in $Q_n$, there is a homomorphism $R_0\htimes \Z_p[\![y_1,\ldots,y_{k}]\!]\rightarrow R_{\overline\rho,Q_n'}^{\psi'}|_U$ such that the action of $R_0\htimes \Z_p[\![y_1,\ldots,y_{k}]\!]$ on $S_{\kappa,\xi}^{\circ}(K^-(Q_n))^\circ_{\leq h,\mathfrak{m}_{Q_n}^-}$ is compatible with the action of $R_0[\Delta_{Q_n}]$ via a fixed surjection $R_0\htimes\Z_p[\![y_1,\ldots,y_{k}]\!]\rightarrow R_0[\Delta_{Q_n}]$.  

We observe that we may view $S_{\kappa,\xi}(K^-(Q_n))_{\leq h,\mathfrak{m}_{Q_n}^-}$ as a module over $\Spa R[\Delta_Q]\times_U C_{U,h}$, where $C_{U,h}$ is the annulus of radius $h$, by letting the coordinate on $C_{U,h}$ act as $U_p^{-1}$.  

Now we consider local-global compatibility at places in $\Sigma_p$.   Recall that the actions of $u^{a}U_{\varpi_v}^b$ and $u^{a}U_{\varpi_v}^{-b}$ on $S_{\kappa,\xi}(K^-(Q_n))_{\leq h}$ are power-bounded for all $v\mid p$.  Thus, we can make $S_{\kappa,\xi}^{\circ}(K^-(Q_n))_{\leq h,\mathfrak{m}_{Q_n}^-}$ into a module over $R_{\overline\rho,Q_n'}^{\psi'}|_U\left\langle p^{h}T_1^{\pm 1},\ldots,p^{h}T_{\lvert\Sigma_p\rvert}^{\pm 1}\right\rangle$ by letting $T_i$ act as $U_{\varpi_{v_i}}^{-1}$.  But local-global compatibility tells that over the analytic locus, $S_{\kappa,\xi}(K^-(Q_n))$ is supported on the trianguline locus, so $S_{\kappa,\xi}^{\circ}(K^-(Q_n))_{\leq h,\mathfrak{m}_{Q_n}^-}$ is actually a 
$R_{ {\tri},\overline\rho,Q_n',\leq h}^{\psi',\underline\kappa}|_U$-module, where the coordinates of $\G_m^{\Sigma_p}$ act as $U_{\varpi_v}^{-1}$.

Since $R_{\overline\rho,Q_n'}^{\psi'}\rightarrow R_{\overline\rho,Q_n'}^{\square,\psi'}$ is formally smooth of dimension $j$, we may construct a homomorphism 
\[	R_0\htimes\Z_p[\![y_1,\ldots,y_{k},y_{k+1},\ldots,y_{k+j}]\!]\rightarrow R_{{\tri},{\overline\rho},{Q_n'},{\leq h}}^{\square,\psi',\underline\kappa}|_U	\]
compatible with 
\[	R_0\htimes\Z_p[\![y_1,\ldots,y_{k}]\!]\rightarrow R_{\overline\rho,Q_n'}^{\psi'}|_U	\]
such that $y_{k+1},\ldots,y_{k+j}$ are the framing variables.  Finally, we fix a surjection $R_{\overline\rho,\loc}^{\square,\psi'}[\![x_1,\ldots,x_g]\!]\twoheadrightarrow R_{\overline\rho,Q_n'}^{\square,\psi'}$ and a map $R_0\htimes\Z_p[\![y_1,\ldots,y_{k+j}]\!]\rightarrow R_{\overline\rho,\loc}^{\square,\psi'}[\![x_1,\ldots,x_g]\!]$ such that the corresponding diagram
\[
	\begin{tikzcd}
		R_0\htimes\Z_p[\![y_1,\ldots,y_{k+j}]\!] \ar[r]\ar[rd] & R_{{\tri},\overline\rho,{\loc},\leq h}^{\square,\psi',\underline\kappa}[\![x_1,\ldots,x_g]\!]|_U \ar[d]	\\
		& R_{{\tri},\overline\rho,Q_n',\leq h}^{\square,\psi',\underline\kappa}|_U
	\end{tikzcd}
\]
commutes.

Now we can patch.  We add framing variables by setting
\[	M_n^{<r}:=\Z_p[\![y_{k+1},\ldots,y_{k+j}]\!]\htimes_{\Z_p}S_{\kappa,\xi}^{<r,\circ}(K_{Q_n}^-)_{\leq h,\mathfrak{m}_{Q_n}^-}	\]
and
\[	M_n:= \Z_p[\![y_{k+1},\ldots,y_{k+j}]\!]\htimes_{\Z_p}S_{\kappa,\xi}^{\circ}(K_{Q_n}^-)_{\leq h,\mathfrak{m}_{Q_n}^-}	\]
so that 
\[	R_0\otimes_{R_0\htimes\Z_p[\![y_1,\ldots,y_{k+j}]\!]}M_n^{<r}\cong S_{\kappa,\xi}^{<r,\circ}(K)_{\leq h,\mathfrak{m}}	\]
for all $n\geq 1$, and 
\[	R_0\otimes_{R_0\htimes\Z_p[\![y_1,\ldots,y_{k+j}]\!]}M_n\twoheadrightarrow S_{\kappa,\xi}^\circ(K)_{\leq h,\mathfrak{m}}	\]
for all $n\geq 1$.

For any open ideal $I\subset R_0\htimes\Z_p[\![y_i]\!]$, we define
\[	M_{I}^{<r}:=R_0\htimes\Z_p[\![y_i]\!]/I\otimes_{\prod_{n\geq 1}R_0\htimes\Z_p[\![y_i]\!]/I}\prod_nM_n^{<r}/I	\]
and
\[	M_{I}:=R_0\htimes\Z_p[\![y_i]\!]/I\otimes_{\prod_{n\geq 1}R_0\htimes\Z_p[\![y_i]\!]/I}\prod_nM_n/I	\]
Here the homomorphism $\prod_{n\geq 1}R_0\htimes\Z_p[\![y_i]\!]/I\rightarrow R_0\htimes\Z_p[\![y_i]\!]/I$ is the localization map coming from our choice of non-principal ultrafilter.

Passing to the inverse limit, we obtain the patched modules 
\[	M_\infty^{<r}:=\varprojlim_I M_{I}^{<r}	\]
and
\[	M_\infty:=\varprojlim_I M_{I}	\]

Similarly, we may define patched global deformation rings $R_{{\tri},\overline\rho,\infty,\leq h,I}^{\square,\psi',\underline\kappa}|_U$ and Hecke algebras $\mathbb{T}_{\infty,\kappa,\leq h,I}^{<r,\circ}$ and $\mathbb{T}_{\infty,\kappa,\leq h,I}^\circ$ via
\[	R_{ {\tri},\overline\rho,\infty,\leq h,I}^{\square,\psi',\underline\kappa}|_ U := R_0\htimes\Z_p[\![y_i]\!]/I\otimes_{\prod_{n\geq 1}R_0\htimes\Z_p[\![y_i]\!]/I}\prod_n R_{ {\tri},\overline\rho,Q_n',\leq h}^{\square,\psi',\underline\kappa}|_U/I	\]
\[	\mathbb{T}_{\infty,\kappa,\leq h,I}^{<r,\circ}:= R_0\htimes\Z_p[\![y_i]\!]/I\otimes_{\prod_{n\geq 1}R_0\htimes\Z_p[\![y_i]\!]/I}\prod_n \mathbb{T}_{K^-(Q_n),\kappa,\leq h}^{<r,\circ}/I	\]
\[	\mathbb{T}_{\infty,\kappa,\leq h,I}^{\circ}:= R_0\htimes\Z_p[\![y_i]\!]/I\otimes_{\prod_{n\geq 1}R_0\htimes\Z_p[\![y_i]\!]/I}\prod_n \mathbb{T}_{K^-(Q_n),\kappa,\leq h}^{\circ}/I	\]
Setting $R_{ {\tri},\overline\rho,\infty,\leq h}^{\square,\psi',\underline\kappa}|_U:=\varprojlim_I R_{ {\tri},\overline\rho,\infty,\leq h,I}^{\square,\psi',\underline\kappa}|_ U$, $\mathbb{T}_{\infty,\kappa,\leq h}^{<r,\circ}:=\varprojlim_I \mathbb{T}_{\infty,\kappa,\leq h,I}^{<r,\circ}$, and $\mathbb{T}_{\infty,\kappa,\leq h}^\circ:=\varprojlim_I \mathbb{T}_{\infty,\kappa,\leq h,I}^\circ$, we have a sequence of homomorphisms
\[	R_0\htimes\Z_p[\![y_i]\!]\rightarrow R_{{\tri},\overline\rho,{\loc},\leq h}^{\square,\psi',\underline\kappa}[\![x_i]\!]|_U\rightarrow R_{ {\tri},\overline\rho,\infty,\leq h}^{\square,\psi',\underline\kappa}|_U\rightarrow \mathbb{T}_{\infty,\kappa}^{\circ}	\]
compatible with their actions on $M_\infty$.

Note that for each open ideal $I$, we have a surjection
\[	R_{ {\tri},\overline\rho, {\loc},\leq h}^{\square,\psi',\underline\kappa}[\![x_i]\!]|_U/I\twoheadrightarrow R_{ {\tri},\overline\rho,\infty,\leq h,I}^{\square,\psi',\underline\kappa}|_ U	\]
Hence we have a surjection 
\[	R_{ {\tri},\overline\rho, {\loc},\leq h}^{\square,\psi',\underline\kappa}[\![x_i]\!]|_U\twoheadrightarrow R_{ {\tri},\overline\rho,\infty,\leq h}^{\square,\psi',\underline\kappa}|_ U \]
and a closed immersion
\[	X_{ {\tri},\overline\rho,\infty,\leq h}^{\square,\psi',\underline\kappa}|_U:=\left(\Spa R_{ {\tri},\overline\rho,\infty,\leq h}^{\square,\psi',\underline\kappa}|_U\right)^{\an}\hookrightarrow X_{ {\tri},\overline\rho, {\loc},\leq h}^{\square,\psi',\underline\kappa}[\![x_i]\!]|_U	\]
Furthermore, since $R_{ {\tri},\overline\rho,Q_n',\leq h}^{\square,\psi',\underline\kappa}|_U/(y_i)\cong R_{ {\tri},\overline\rho,\Sigma_p,\leq h}^{\square,\psi',\underline\kappa}|_U$ for all $n$, we see that $R_{ {\tri},\overline\rho,\infty,\leq h}^{\square,\psi',\underline\kappa}|_U/(y_i)\cong R_{ {\tri},\overline\rho,\Sigma_p,\leq h}^{\square,\psi',\underline\kappa}|_U$.

\begin{lemma}
	The patched modules $M_\infty^{<r}$ and $M_\infty$ are finite over $R_0\htimes\Z_p[\![y_i]\!]$.  In particular, they are complete.
	\label{lemma: patched modules finite}
\end{lemma}
\begin{proof}
	The powers of the ideal $(u,y_1,\ldots,y_{k+j})$ are cofinal in the set of open ideals of $R_0\htimes\Z_p[\![y_i]\!]$, and for any open ideals $I\subset I'\subset R_0\htimes\Z_p[\![y_i]\!]$, the natural maps $M_I^{<r}/I'\rightarrow M_{I'}^{<r}$ and $M_I/I'\rightarrow M_{I'}$ are isomorphisms.  Then ~\cite[Tag 09B8]{stacks-project} implies that $M_\infty^{<r}$ and $M_\infty$ are complete and $M_\infty^{<r}/I\cong M_I^{<r}$ and $M_\infty/I\cong M_I$ for all open ideals $I\subset R_0\htimes\Z_p[\![y_i]\!]$. 

	We have 
	\[	M_\infty^{<r}/(u,y_1,\ldots,y_{k+j})\cong M_{(u,y_1,\ldots,y_{k+j})}^{<r}\cong S_{\kappa,\xi}^{<r,\circ}(K)_{\leq h,\mathfrak{m}}/u	\]
	which is $R_0$-finite.  Since the number of generators of $M_n/(u,y_1,\ldots,y_{k+j})$ over $R_0$ is bounded independently of $n$, $M_\infty/(u,y_1,\ldots,y_{k+j})$ is $R_0$-finite.  Hence by ~\cite[Theorem 8.4]{matsumura}, $M_\infty^{<r}$ and $M_\infty$ are $R_0\htimes\Z_p[\![y_i]\!]$-finite.
\end{proof}

\begin{prop}
	If $R=L$ is a field with ring of integers $\O_L$, then $M_\infty^{<r}$ is a finite projective $\O_L[\![y_1,\ldots,y_{k+j}]\!]$-module.
\end{prop}
\begin{proof}
	We claim it is enough to show that for any open ideal $I$, $M_n^{<r}/I$ is a free $\O_L[\![y_i]\!]/I$-module of rank $d$ for all $n\gg0$.  Indeed, because our ultrafilter is non-principal, this implies that $M_I^{<r}$ is also a free $\O_L[\![y_i]\!]/I$-module of rank $d$ (since the localization $\prod_{n\geq 1}\O_L[\![y_i]\!]/I\rightarrow \O_L[\![y_i]\!]/I$ factors through the localization $\prod_{n\geq 1}\O_L[\![y_i]\!]/I\rightarrow\prod_{n\geq n_0}\O_L[\![y_i]\!]/I$ for any $n_0\geq 1$).  Since $M_\infty^{<r}$ is $(u,y_1,\ldots,y_{k+j})$-adically separated, ~\cite[Theorem 22.3]{matsumura} implies that $M_\infty^{<r}$ is flat over $\O_L[\![y_i]\!]$, and hence projective.

	By Proposition~\ref{prop: fin proj diamond operators}, $S_{\kappa,\xi}^{<r,\circ}(K^-(Q_n))_{\leq h}$ is a projective $R_0[\Delta_{Q_n}]$-module of rank $d$ for all $n$.  Then for $n\gg0$ (depending on $I$), $M_n^{<r}/I$ is free over $\O_L[\![y_i]\!]$ of rank $d$, so we are done.
\end{proof}

The modules $M_\infty^{<r}$ and $M_\infty$ behave well under finite base change, in particular, under passage to closed subspaces of $U$:
\begin{lemma}
	Let $f:R_0\rightarrow R_0'$ be a finite morphism, where $R_0'$ is a noetherian ring of definition in a pseudoaffinoid algebra.  Let $\kappa'$ be the weight $f\circ\kappa$, and let ${M_\infty'}^{<r}$ denote the patched module constructed from the modules of modular forms $S_{\kappa',\xi}^{<r,\circ}(K_{Q_n}^-)_{\leq h,\mathfrak{m}_{Q_n}^-}$.  Then the natural maps 
		\[	R_0'\htimes\Z_p[\![y_i]\!]\otimes_{R_0\htimes\Z_p[\![y_i]\!]}M_\infty^{<r}\rightarrow {M_\infty^{<r}}'	\]
		and
		\[	R_0'\htimes\Z_p[\![y_i]\!]\otimes_{R_0\htimes\Z_p[\![y_i]\!]}M_\infty\rightarrow M_\infty'	\]
		are surjections.
	\label{lemma: patched module finite base change}
\end{lemma}
\begin{proof}
	We treat the first map; the second is similar.  Let $M_n':=\Z_p[\![y_{k+1},\ldots,y_{k+j}]\!]\otimes_{\Z_p}S_{\kappa',\xi}^{<r,\circ}(K_{Q_n}^-)_{\leq h,\mathfrak{m}_{Q_n}^-}$.  The open ideals $I\subset R_0\htimes\Z_p[\![y_i]\!]$ generate open ideals of $R_0'\htimes\Z_p[\![y_i]\!]$ and are cofinal, so it suffices to show that we have a surjection
	\[	R_0'\htimes\Z_p[\![y_i]\!]/I\otimes_{R_0\htimes\Z_p[\![y_i]\!]/I}M_I\rightarrow M_I':=\left(R_0'\htimes\Z_p[\![y_i]\!]/I\right)\otimes_{\prod_{n\geq 1}R_0'\htimes\Z_p[\![y_i]\!]/I}\prod_{n\geq 1}M_n'/I	\]
	The left side is isomorphic to $R_0'\otimes_{R_0}M_I$ (because $R_0\htimes\Z_p[\![y_i]\!]/I$ is discrete, by construction).  Since each map $R_0'\otimes_{R_0}M_n\rightarrow M_n'$ is surjective (by Lemma~\ref{lemma: base change Dcirc}, and since the transition maps $\prod_{n=1}^{k+1}M_n\rightarrow \prod_{n=1}^kM_n$ are surjective, the Mittag-Leffler condition implies that the natural map
	\[	R_0'\otimes_{R_0}M_I\rightarrow M_I'	\]
	is surjective.
\end{proof}

We have contructed two coherent $R_0\htimes\Z_p[\![y_i]\!]$-modules, $M_\infty^{<r}$ and $M_\infty$; $M_\infty$ is naturally a $R_{ {\tri},\overline\rho,\infty,\leq h}^{\square,\psi',\underline\kappa}|_U$-module, but $M_\infty^{<r}$ is projective when $U$ is a point, making its support over $R_0\htimes\Z_p[\![y_i]\!]$ easier to analyze. 

We now pass to the loci of the corresponding map 
\[	\Spa R_{{\tri},\overline\rho,{\loc},\leq h}^{\square,\psi',\underline\kappa}[\![x_i]\!]|_U\rightarrow \Spa R_0\htimes\Z_p[\![y_i]\!]	\]
where $u\neq 0$, and we consider the analytification $M_\infty^{\an}$ of $M_\infty$ as a coherent sheaf over $X_{{\tri},\overline\rho,{\loc},\leq h}^{\square,\psi',\underline\kappa}|_U\times\Spa\Z_p[\![x_i]\!]$.  

\begin{lemma}
	The support of $M_\infty^{\an}$ is a Zariski-closed subspace of dimension
\[	\dim \Spa R_0\htimes\Z_p[\![y_i]\!]\left[\frac 1 u\right] = \dim U + (g+1)+(4\lvert\Sigma_p\rvert-1) = \dim U + g + 4\lvert\Sigma_p\rvert	\]
	\label{lemma: dim supp M_infty}
\end{lemma}
\begin{proof}
	If $x:R\rightarrow L$ is a maximal point, and $\O_L$ is the ring of integers of $L$, it suffices to show that $\O_L\otimes_{R_0}M_\infty$ is supported on all of $\Spec \O_L[\![y_i]\!]$.  We set $\kappa':=x\circ\kappa$ and we let ${M_\infty'}^{<r}$ and $M_\infty'$ be the patched modules constructed from the modules $S_{\kappa',\xi}^{<r,\circ}(K^-(Q_n))_{\leq h,\mathfrak{m}_{Q_n}^-}$ and $S_{\kappa',\xi}^\circ(K^-(Q_n))_{\leq h,\mathfrak{m}_{Q_n}^-}$, respectively.  Since the natural map
	\[	\O_L\otimes_{R_0}M_\infty\rightarrow M_\infty'	\]
	is surjective, it suffices to show that $M_\infty'$ is supported on all of $\Spec \O_L[\![y_i]\!]$.  To see this, we consider the natural morphism ${M_\infty'}^{<r}\rightarrow M_\infty'$.

	We will show that ${M_\infty'}^{<r}\rightarrow M_\infty'$ is an isomorphism over a dense open subspace of $\Spec \O_L[\![y_i]\!]$.  Let $P_n$ be the cokernel of ${M_n'}^{<r}\rightarrow M_n'$, and let $P$ be the cokernel of ${M_\infty'}^{<r}\rightarrow M_\infty'$.  Since the cokernel of
	\[	S_{\kappa,\xi}^{<r,\circ}(K^-(Q_n))_{\leq h,\mathfrak{m}_{Q_n}^-}\rightarrow S_{\kappa,\xi}^\circ(K^-(Q_n))_{\leq h,\mathfrak{m}_{Q_n}^-}	\]
	is finite and $u$-power-torsion, $P_n$ is also $u$-power-torsion.

	There is some integer $k_0\geq 0$ such that 
	\[	u^{k_0}S_{\kappa',\xi}^\circ(K_0(Q_n))_{\leq h,\mathfrak{m}_{0,Q_n}}\subset S_{\kappa',\xi}^{<r.\circ}(K_0(Q_n))_{\leq h,\mathfrak{m}_{0,Q_n}}	\]
	and by Corollary~\ref{cor: control new lattice} the kernel of 
	\[	\left( S_{\kappa,\xi}^\circ(K^-(Q_n))_{\leq h,\mathfrak{m}_{Q_n}^-} \right)_{\Delta_{Q_n}}\rightarrow S_{\kappa',\xi}^\circ(K_0(Q_n))_{\leq h,\mathfrak{m}_{0,Q_n}}	\]
	is annhilated by $u^{(d-1)a}$.  Hence there is some $N\gg 0$ such that $u^NP_n\subset (y_i)P_n$ for all $n$, and by devissage, the modules $P_n/(y_i)^kP_n$ are annhilated by $u^{kN}$ for all $k, n\geq 0$.

	Next, we observe that we have exact sequences
	\[	0\rightarrow M_n^{<r}/(y_i)^k\rightarrow M_n/(y_i)^k\rightarrow P_n/(y_i)^k\rightarrow 0	\]
	for all $n$.  Indeed, there are surjections
	\[	\Tor_1^{\O_L[\![y_i]\!]}(\O_L[\![y_i]\!]/(y_i)^k, P_n)\twoheadrightarrow \ker\left( M_n^{<r}/(y_i)^k\rightarrow M_n/(y_i)^k \right)	\]
	But $M_n^{<r}$ is projective over $\O_L[\![y_1]\!]$, so $M_n^{<r}/(y_i)^k$ has no $u$-torsion, whereas $\Tor_1^{\O_L[\![y_i]\!]}(\O_L[\![y_i]\!]/(y_i)^k, P_n)$ is entirely $u$-power-torsion, because $P_n$ is.

	Let $J_k\subset \O_L[\![y_i]\!]$ be the ideal generated by $u^{Nk}$ and $(y_i)^k$.  Then the Tor long exact sequence gives us exact sequences
	\[	\Tor_1^{\O_L[\![y_i]\!]/(y_i)^k}\left(P_n/(y_i)^k, \O_L[\![y_i]\!]/J_k\right)\rightarrow {M_n'}^{<r}/J_k\rightarrow M_n'/J_k\rightarrow P_n/J_k\rightarrow 0	\]
	Moreover,
	\[	\Tor_1^{\O_L[\![y_i]\!]/(y_i)^k}\left(P_n/(y_i)^k, \O_L[\![y_i]\!]/J_k\right) = (P_n/(y_i)^k)[u^{Nk}] = P_n = P_n/J_k	\]

	Let $P_{J_k}$ denote the localization of $\prod_{n\geq 1}P_n/J_k$ at the ideal corresponding to our chosen ultrafilter.  We have an exact sequence
	\[	P_{J_k}\rightarrow {M_{J_k}'}^{<r}\rightarrow M_{J_k}'\rightarrow P_{J_k}\rightarrow 0	\]
	and since the set $\{J_k\}$ is cofinal in the set of open ideals of $\O_L[\![y_i]\!]$, an exact sequence
	\[	P\rightarrow {M_\infty'}^{<r}\rightarrow M_\infty'\rightarrow P\rightarrow 0	\]
	But since $u^NP\subset (y_i)P$, $P$ is supported on a proper closed subscheme of $\Spf \O_L[\![y_i]\!]$;  away from the support of $P$, the map ${M_\infty'}^{<r}\rightarrow M_\infty'$ is an isomorphism, as desired.
\end{proof}

The support of $M_\infty^{\an}$ over $X_{ {\tri},\overline\rho,{\loc},\leq h}^{\square,\psi',\underline\kappa}|_U\times\Spa\Z_p[\![x_i]\!]$ is a Zariski-closed subspace, whose dimension must therefore be
\[	\dim \Spa R_0\htimes\Z_p[\![y_i]\!]\left[\frac 1 u\right] = \dim U + g + 4\lvert\Sigma_p\rvert	\]
But the morphism $X_{{\tri},\overline\rho,\loc}^{\square,\psi',\underline\kappa}|_U\rightarrow U$ has relative dimension $4\lvert\Sigma_p\rvert$ over an open subspace of $U$ by Proposition~\ref{prop: rel dim loc tri}, so any non-empty irreducible components have total dimension $\dim U+4\lvert\Sigma_p\rvert$.  It follows that the support of $M_\infty^{\an}$ on $X_{ {\tri},\overline\rho,{\loc},\leq h}^{\square,\psi',\underline\kappa}|_U\times\Spa\Z_p[\![x_i]\!]$ is the union of irreducible components.

Finally, since we have a closed embedding
\[	X_{ {\tri},\overline\rho,\infty,\leq h}^{\square,\psi',\underline\kappa}|_U\hookrightarrow X_{{\tri},\overline\rho,{\loc},\leq h}^{\square,\psi',\underline\kappa}|_U\times\Spa\Z_p[\![x_i]\!]	\]
we conclude that the support of $M_\infty$ on $X_{ {\tri},\overline\rho,\infty,\leq h}^{\square,\psi,\underline\kappa}|_U$ is also a union of irreducible components, which we denote $\mathscr{X}_{\underline D^\times,U,\leq h}^{\infty,\psi,\underline\kappa}$.

We have a sequence of morphisms
\[	X_{ {\tri},\overline\rho,\infty}^{\square,\psi',\underline\kappa}|_U\hookrightarrow X_{{\tri},\overline\rho,{\loc}}^{\square,\psi',\underline\kappa}|_U\times\Spa\Z_p[\![x_i]\!]\rightarrow \G_{m,U}^{\ad}\times\Spa\Z_p[\![y_i]\!]	\]
(where we send the product of the factors of $\G_m^{\ad}$ in the definition of the trianguline varieties to the factor of $\G_m^{\ad}$ on the right, corresponding to the action of $U_p^{-1}$); $M_\infty$ is a finite module on $X_{ {\tri},\overline\rho,\infty}^{\square,\psi',\underline\kappa}|_U$ whose pushforward to $\G_{m,U}^{\ad}\times\Spa\Z_p[\![y_i]\!]$ is also finite.  

We summarize this discussion:
\begin{thm}
	There is a space $\mathscr{X}_{\underline D^\times,U,\leq h}^{\infty,\psi,\underline\kappa}$ (which we call the patched eigenvariety over $U$), a finite module $M_\infty$ supported on $\mathscr{X}_{\underline D^\times,U,\leq h}^{\infty,\psi,\underline\kappa}$, (which we call the patched module) and a morphism
	\[	\mathscr{X}_{\underline D^\times,U,\leq h}^{\infty,\psi,\underline\kappa}\rightarrow \Spa X_{{\tri},\overline\rho,{\loc},\leq h}^{\square,\psi',\underline\kappa}|_U\times\Spa \Z_p[\![x_i]\!]	\]
	whose image is the union of irreducible components.
\end{thm}

Since this morphism factors through the global trianguline variety, we also deduce the following corollary:
\begin{cor}\label{cor: patched module support}
	The support of $M_\infty/(y_1,\ldots,y_{k})$ in the trianguline variety over $X_{ {\tri},\overline\rho,\infty,\leq h}^{\square,\psi',\underline\kappa}|_U/(y_1,\ldots,y_{k})\cong X_{{\tri},\Sigma_p,\overline\rho,\leq h}^{\square,\psi',\underline\kappa}|_U$ is a union of irreducible components.
\end{cor}

\begin{remark}
	We carried out this construction locally, because it is difficult to study the behavior of $M_\infty^{<r}$ and $M_\infty$ under rational localization; we have not checked that the analytic patched modules $M_\infty^{\an}$ form a coherent sheaf. However, because specialization maps induce surjections on patched modules, as $(U,h)$ varies over slope data, the supports of patched modules glue to a global patched eigenvariety $\mathscr{X}_{\underline D^\times}^{\infty,\psi,\underline\kappa}$.
\end{remark}

\subsection{Modularity}

We are now in a position to prove Theorem~\ref{thm: main thm precise}.  We will say that a Galois representation $\rho$ is \emph{modular} if it comes from a point on the extended eigenvariety.

\begin{prop}
	Let $F/\Q$ be a real quadratic extension split at $p$, such that the image of $\overline\rho|_{\Gal_F}$ contains $\SL_2(\F_p)$.  Then $\rho:\Gal_{\Q}\rightarrow \GL_2(L)$ is modular if and only if $\rho|_{\Gal_F}$ is modular.
	\label{prop: modularity cyclic base change}
\end{prop}
\begin{proof}
	We have the cyclic base-change morphism $\mathscr{X}_{\GL_2/\Q,\mathrm{cusp}}\rightarrow \mathscr{X}_{\GL_2/F,\mathrm{mid}}$ from \textsection \ref{section: cyclic base change}, so if $\rho$ corresponds to $x\in \mathscr{X}_{\GL_2/\Q,\mathrm{cusp}}$, then $\rho|_{\Gal_F}$ corresponds to the image of $x$ in $\mathscr{X}_{\GL_2/F,\mathrm{mid}}$.  To show the other direction, we note that if $\rho|_{\Gal_F}$ is associated to $x'\in \mathscr{X}_{\GL_2/F}$, then the corresponding eigenvalues are fixed by $\Gal(F/\Q)$.  Since we assumed that the image of $\overline\rho|_{\Gal_F}$ contains $\SL_2(\F_p)$, by ~\cite[Theorem B.0.1]{bergdall-hansen} we may apply Corollary ~\ref{cor: image cyclic base change} to conclude that $x'$ is in the image of $\mathscr{X}_{\GL_2/\Q,\mathrm{cusp}}$.
\end{proof}

Choose $F/\Q$ a real quadratic extension split at $p$.  We may additionally choose $F$ such that the image of $\overline\rho|_{\Gal_F}$ contains $\SL_2(\F_p)$, by requiring that $\ell$ splits in $F$ for $\ell$ in some finite set of primes $S$ of $\Q$ such that $\{\overline\rho(\Frob_\ell)\}_{\ell\in S}$ generate $\SL_2(\F_p)$.  Maintaining the notation of the previous section, we let $D/F$ be a totally definite quaternion algebra, split at all finite places, and we let $R:=\O_E[\![T_0/\overline{Z(K)}]\!]$.  The Jacquet--Langlands correspondence gives us a morphism of eigenvarieties $\mathscr{X}_{\underline D^\times}\rightarrow \mathscr{X}_{\GL_2/F}$, so it suffices to show that $\rho|_{\Gal_F}$ corresponds to a point on $\mathscr{X}_{\underline D^\times}$.

\begin{thm}
	$\rho|_{\Gal_F}$ corresponds to a point on $\mathscr{X}_{\underline D^\times}$.
\end{thm}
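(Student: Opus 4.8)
The plan is to realise $\rho|_{\Gal_F}$ as a point of a bounded global trianguline variety which lies in the support of the patched module $M_\infty$, and then to unwind the patching construction to read off a Hecke eigensystem in a space of overconvergent quaternionic forms. Since $F$ is split at $p$, for each $v\mid p$ we have $F_v\cong\Q_p$, and after twisting $\rho$ by a suitable character the restriction $\rho|_{\Gal_{F_v}}$ is trianguline with a regular parameter $\underline\delta^{(\rho)}_v$; set $\kappa_{v,i}:=\delta^{(\rho)}_{v,i}|_{\O_{F_v}^\times}$, choose an integer $h$ bounding the slopes of the $\delta^{(\rho)}_{v,i}(\varpi_v)$, and let $\psi^{(\rho)}$ be the (twisted) determinant. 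Because $\rho|_{\Gal_F}$ is one of the maximal points whose closure defines the trianguline variety, choosing framings at the places above $p$ produces a point $x_\rho$ of the bounded global trianguline variety $X_{{\tri},\overline\rho}^{\square,\psi',\underline\kappa}|_U$, where I now write $\overline\rho$ for $\overline\rho|_{\Gal_F}$ (still absolutely irreducible over $F(\zeta_p)$ by the technical hypotheses, which is exactly why the main theorem imposes absolute irreducibility over $\Q(\sqrt 2,\sqrt 3,\zeta_p)$), and $\psi'$, $\underline\kappa$ are a deformation of $\det\overline\rho$ and a family of weights as in the patching construction, chosen so that their specialisations at the weight $\kappa_\rho$ of $x_\rho$ recover $\psi^{(\rho)}$ and $\underline\delta^{(\rho)}|_{\O^\times}$. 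I would take $U\subseteq\mathscr{W}_F$ a connected affinoid (or the maximal point) containing $\kappa_\rho$, small enough that $(U,h)$ is a slope datum for all the relevant overconvergent complexes (Lemmas~\ref{lemma: slope decomp lowering level} and~\ref{lemma: slope decomp raising level}) and large enough to meet the Zariski-dense locus of strictly dominant characteristic $0$ weights. Running the patching construction of the previous subsection over this $U$ yields $M_\infty$ and, by Corollary~\ref{prop: patched module support}, the locus $S:=\supp\bigl(M_\infty^{\an}/(y_1,\dots,y_h)\bigr)\subseteq X_{{\tri},\overline\rho}^{\square,\psi',\underline\kappa}|_U$ is a union of irreducible components.

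The heart of the matter is the inclusion $x_\rho\in S$. Because $\underline\delta^{(\rho)}$ is regular, $x_\rho$ lies in the regular-parameter locus of the trianguline variety, which is a smooth locus by the smoothness of $\mathcal{S}_d^\square\to\mathcal{T}_{\reg}^d$ in Proposition~\ref{prop: dim S_d} together with the $\GL_d$- and $\Gm$-torsor computations used in the proof of the dimension of $X_{{\tri},\overline\rho}^\square$; hence $x_\rho$ lies on a unique irreducible component $Z$ of $X_{{\tri},\overline\rho}^{\square,\psi',\underline\kappa}|_U$. As $S$ is a union of components it suffices to exhibit a single point of $Z$ lying in $S$. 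No component of the trianguline variety is supported in characteristic $p$: the weight morphism to $\mathscr{W}_F$ is flat with equidimensional fibres by Proposition~\ref{prop: rel dim loc tri}, and $\mathscr{W}_F$ has no characteristic $p$ component. So the complement $Z^\circ$ of the characteristic $p$ locus is a dense Zariski-open, hence irreducible, subspace of $Z$, whose closure $Z^{\rig}$ in the rigid-analytic trianguline variety is an irreducible component. By the accumulation of crystalline points on the components of a trianguline variety (as in~\cite{chenevier2013}, \cite{breuil-hellmann-schraen}, \cite{breuil-hellmann-schraen-smoothness}), $Z^{\rig}$ contains a point $x'$ of some strictly dominant weight $\kappa'\in U$ which is crystalline with pairwise distinct crystalline Frobenius eigenvalues $\alpha_1,\dots,\alpha_d$; by~\cite[Corollary 2.5]{breuil-hellmann-schraen-smoothness} its parameters have the shape $\delta_i(x')=\alpha_i\prod_\sigma\sigma(x')^{e_{i,\sigma}}$, and distinctness of the $\alpha_i$ makes $\underline\delta(x')$ regular, so $x'$ too is a smooth point and lies only on $Z$. (If $\rho$ itself has characteristic $0$ coefficients, $x_\rho$ is already such a point of $X^{\rig}$ and one may take $x'$ to be a nearby nice crystalline point on the same component.)

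The crystalline representation $\rho'$ attached to $x'$ is then a crystalline lift of $\overline\rho|_{\Gal_F}$ of strictly dominant regular weight with distinct Frobenius eigenvalues; since $\overline\rho$ is modular, $\overline\rho|_{\Gal_F}$ is modular (by base change), the remaining hypotheses on $\overline\rho$ pass to $F$, and hence $\rho'$ is modular --- this is the crystalline modularity-lifting input, resting ultimately on~\cite{kisin2009} and the $p$-adic local Langlands correspondence of~\cite{emerton2011}. A modular point of strictly dominant weight lies in the support of the patched module, since $M_\infty$ interpolates the spaces of overconvergent quaternionic forms and their classical subspaces see every automorphic Hecke eigensystem; thus $x'\in S$, and as $S$ is a union of components while $x'$ lies on the single component $Z$, we conclude $Z\subseteq S$, hence $x_\rho\in S$. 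Finally, the isomorphisms $R_0\otimes_{R_0\htimes\Z_p[\![y_1,\dots,y_h]\!]}M_n\cong S_{\kappa,\psi}(K)^\circ_{\leq h,\mathfrak p}$ from the construction, specialised at $\kappa_\rho$ and with $u$ inverted, show that the Hecke eigensystem of $\rho|_{\Gal_F}$ occurs in $S_{\kappa_\rho,\psi^{(\rho)}}(K)_{\leq h}$, which by the explicit description of the Hecke action on overconvergent quaternionic forms is exactly the statement that $\rho|_{\Gal_F}$, appropriately twisted, corresponds to a point of $\mathscr{X}_{\underline D^\times}$. The main obstacle is the middle step: arranging that the component $Z$ through the characteristic $p$ point $x_\rho$ meets the crystalline locus inside a fibre of strictly dominant weight --- this requires the density of nice crystalline points on every component of the trianguline variety, in conjunction with the fact (Corollary~\ref{cor: crys locus in loc tri}) that for strictly dominant weight the crystalline locus is a union of components --- and then transporting those crystalline points into $S$ via crystalline modularity lifting over $F$.
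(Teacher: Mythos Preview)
Your overall architecture is right --- show the support is a union of components, locate $x_\rho$ on one component, exhibit a crystalline modular point on that component, conclude --- but the key step is carried out on the wrong variety, and this is where the argument breaks.

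You work throughout with a component $Z$ of the \emph{global} trianguline variety $X_{\tri,\overline\rho,\Sigma_p}^{\square,\psi',\underline\kappa}|_U$ (ramification only at $\Sigma_p$), and then invoke accumulation of crystalline points on $Z$, citing \cite{chenevier2013}, \cite{breuil-hellmann-schraen}, \cite{breuil-hellmann-schraen-smoothness}. Those results are about \emph{local} trianguline varieties. There is no a~priori reason a component of the global variety with prescribed minimal ramification should contain any crystalline point at all; proving that it does is essentially equivalent to the modularity statement you are trying to establish, and is exactly the obstruction that the Taylor--Wiles method is designed to remove. (Relatedly, your smoothness claim for $x_\rho$ on the global variety is not supported by Proposition~\ref{prop: dim S_d}, which again only treats the local picture.)

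The paper avoids this by working on the \emph{local} trianguline variety $X_{\tri,\overline\rho_0,\loc,\leq h}^{\square,\psi',\underline\kappa}$, where regularity of the parameters does give smoothness and hence a unique local component $V$ through $\rho_0$. The patching construction then supplies the missing global input: the images of $\bigcup_n X_{\tri,\overline\rho_0,Q_n',\leq h}^{\square,\psi',\underline\kappa}$ --- global points allowed to ramify at the auxiliary Taylor--Wiles primes $Q_n$ --- form a Zariski-dense subset of each fibre $V|_{\kappa_1}$. Combining this density with Corollary~\ref{cor: crys locus in loc tri} (the crystalline locus in a strictly dominant fibre of $V$ is a union of components) yields a crystalline \emph{global} lift $\rho_1$, ramified at $Q_n\cup\Sigma_p$ for some $n$, lying on $V$. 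Kisin's theorem makes $\rho_1$ modular at level $K^-(Q_n)$, so $\rho_1$ lies in the support of $M_\infty$, and since that support is a union of local components it contains all of $V$, hence $x_\rho$. In short, the crystalline point you need is produced not by density on a global component but by density of Taylor--Wiles--ramified global points in the local component; this is the step your proposal is missing.
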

\begin{remark}
	There is some $h\in\Q_{\geq 0}$ such that $\rho|_{\Gal_{F_v}}$ is trianguline with parameter of slope-$\leq h$ for each $v\mid p$, and there is some open affinoid $U\subset \mathscr{W}_F$ containing the weight of $\rho|_{\Gal_{F_v}}$ such that $(U,h)$ is a slope datum for $\mathscr{X}_{\underline D^\times}$.  In the following proof, we will work with a patched eigenvariety $X_{ {\tri},\overline\rho,\infty,\leq h}^{\square,\psi',\underline\kappa}|_U$.  However, since our arguments only require working sufficiently close to the boundary, we are always free to shrink $U$ (or increase $h$).  For compactness of exposition and notation, we therefore suppress $(U,h)$ from the proof.

	Note that we may also arrange $(U,h)$ so that the slope-$\leq h$ part of $\mathscr{X}_{\underline D^\times}|_U$ is non-empty.  Indeed, we have assumed that $\overline\rho$ is modular, so there are components of $\mathscr{X}_{\underline D^\times}$ with residualGalois representation $\overline\rho|_{\Gal_F}$.  Then ~\cite[Corollary 4.2.4]{johansson-newton19} shows that such components contain classical points with parallel weight $2$, and the proof shows that we may find such points arbitrarily close to the boundary by varying the $p$-nebentypus.  Such classical points have slope-$\leq 1$, so we may ensure non-emptiness by increasing $h$ (and shrinking $U$, if necessary).
\end{remark}
\begin{proof}
	Let $\rho_0:=\rho|_{\Gal_F}$.   We have assumed that $\rho|_{\Gal_{\Q_p}}$ is trianguline, so we may write $D_{\rig}(\rho|_{\Gal_{\Q_p}})$ as an extension of rank-$1$ $(\varphi,\Gamma)$-modules:
	\[	0\rightarrow \Lambda_{L,\rig}(\delta_1)\rightarrow D_{\rig}(\rho|_{\Gal_{\Q_p}})\rightarrow \Lambda_{L,\rig}(\delta_2)\rightarrow 0	\]
	for characters $\delta_1,\delta_2:\Q_p^\times\rightrightarrows L^\times$.  After twisting, we may assume that $\delta_1|_{\Z_p^\times}$ is trivial.  We fix a weight $\kappa_0$ according to $\delta_1|_{\Z_p^\times}$ and $\delta_2|_{\Z_p^\times}$,
	and we fix an unramified character $\psi_0:\Gal_F\rightarrow \O_E[\![T_0/\overline{Z(K)}]\!]^\times$ deforming $\chi_{\cyc}\kappa_{0,1}\kappa_{0,2}\det\rho|_{\Gal_F}$.

	It is enough to show that the point $x_0\in X_{{\tri},\overline\rho_0}^{\square,\psi',\underline\kappa}$ corresponding to $\rho_0$ is in the support of the patched module ${M}_\infty$.  To see this, we treat separately the cases when $\rho$ is ordinary or non-ordinary at $p$.

	We first assume that $\rho$ is ordinary at $p$.  Then $\rho_0|_{\Gal_{F_v}}$ has the form
	\[	0\rightarrow \chi_{1,v}\rightarrow \rho_0|_{\Gal_{F_v}}\rightarrow \chi_{2,v}\rightarrow 0	\]
	for each $v\mid p$, where $\chi_{i,v}:\Gal_{F_v}\rightrightarrows \O_E^\times$ are characters.

	We wish to consider the slope-$0$ trianguline variety, which is the same as considering ordinary deformations of these extensions (with determinant fixed).  The characters $\chi_{i,v}|_{\O_{F_v}}$ deform over weight space, and the unramified characters specified by $\chi_{1,v}(p)$ correspond to a point on $\Spf \O_E[\![\{t_v\}_{v\mid p}]\!]$.  Note that since we specify a determinant at every point of weight space, deforming $\chi_{1,v}(p)$ also uniquely deforms $\chi_{2,v}(p)$.

	Thus, we see that the space
	\[	\Spf \O_E[\![T_0/\overline{Z(K)},\{t_v\}_{v\mid p}]\!]	\]
	is the moduli space of pairs of characters.  Moreover, away from a Zariski-closed subspace of this space, the space of extensions of the universal characters is a rank-$1$ vector bundle.  In particular, since $\kappa_0$ is regular, over an open neighborhood of $\kappa_0$ in $\mathscr{W}_F$, this moduli space is irreducible (and contains the point corresponding to $\rho_0$).  Adding framing variables and passing from extensions to Galois representations preserve irreducibility of the moduli space.

	Thus, it suffices to show that the ordinary patched module for some characteristic $0$ weight $\kappa_1$ sufficiently near the boundary is supported on the fiber of this moduli space.  We choose $\kappa_1$ so that it is parallel.  But the ordinary part of the Coleman--Mazur eigencurve is finite flat and surjective over weight space, so we may choose ordinary overconvergent eigenforms of appropriate weight and transfer them to $\underline D^\times$, where they contribute to the support of the patchd module, as desired.

	We now assume that $\rho$ is not ordinary at $p$, so that $\rho_0$ is not ordinary at either place above $p$.
	Since the parameters of $D_{\rig}(\rho)$ were assumed regular, $x_0$ is a smooth point of $X_{ {\tri},\overline\rho_0,{\loc}}^{\square,\psi',\underline\kappa}$.  Therefore, $x_0$ is contained in a unique irreducible component $V$ of $X_{ {\tri},\overline\rho_0,{\loc}}^{\square,\psi',\underline\kappa}$, and we can find an open affinoid neighborhood $V'\subset V$ of $x_0$ so that $V'$ contains no ordinary parameters.  

	It follows that $\rho_0$ can be analytically deformed to characteristic $0$ (as in Example~\ref{example: tri var over wt space residual representation}).  In fact, the morphism $X_{ {\tri},\overline\rho_0,{\loc}}^{\square,\psi',\underline\kappa}\rightarrow \mathscr{W}_F$ is smooth in a neighborhood of $x_0$; in particular, it is open, and we may assume that the weights corresponding to the first term in the triangulation remain trivial as we deform, at both places above $p$.

	Recall that for any $p$-adic field $K/\Q_p$, given a character $\delta:K^\times\rightarrow \overline\Q_p^\times$, its \emph{weight} $(\mathrm{wt}_\sigma(\delta))_{\sigma:K\hookrightarrow \overline\Q_p}$ is the tuple such that
	\[	\lim_{a\rightarrow 0}\frac{\lvert \delta(1+a)-1 + \sum_{\sigma}\mathrm{wt}_\sigma(\delta)\sigma(a)\rvert}{\lvert a\rvert} = 0	\]
	We say that $\delta$ is \emph{locally algebraic of weight $(k_\sigma)_\sigma$} if $\mathrm{wt}_\sigma(\delta)=k_\sigma\in\Z$ for all $\sigma$; equivalently, the restriction of $\delta$ to some open subgroup of $\O_K^\times$ is $\chi_{\underline k}:=x\mapsto\prod_\sigma \sigma(x)^{-k_\sigma}$.  If $\underline\delta$ is the parameter of a trianguline $(\varphi,\Gamma)$-module, we say that it is \emph{locally algebraic of strongly dominant weight} if $\delta_{i,\sigma}$ is locally algebraic of weight $(k_{i,\sigma})$ and $k_{i,\sigma}<k_{i+1,\sigma}$, for all $i$ and $\sigma$.

	We claim that there is some locally algebraic strongly dominant $\kappa_1\in\mathscr{W}_F^{\rig}$ such that the fiber $V^{\rig}|_{\kappa_1}$ is equidimensional of dimension $8$.  Indeed, since we chose a deformation of $(\rho_0,\underline\delta)$ keeping $\delta_1|_{\O_{F,p}^\times}$ trivial, it is enough to see that there are points of $\Spf \Z_p[\![\Z_p^\times]\!]^{\rig}$ which are locally algebraic of positive weight arbitrarily close to the boundary, which follows from the calculations of e.g. ~\cite[\textsection 2.7]{johansson-newton19}.  
	In fact, we can choose $\kappa_1$ to be locally algebraic of weights $\{0,1\}$ at both places above $p$.

	We further claim that we may assume that $V^{',\rig}|_{\kappa_1}$ consists of points corresponding to Galois representations which are potentially Barsotti--Tate at both places above $p$.  Indeed, $2$-dimensional (characteristic $0$) $(\varphi,\Gamma_{\Q_p})$-modules are classified in ~\cite[\textsection 3.3]{colmez2008a}, and after possibly replacing $\kappa_1$ with a weight closer to the boundary which is locally algbraic of weights $\{0,1\}$, the corresponding $(\varphi,\Gamma_{\Q_p})$-modules are crystabelline.

	Now it suffices to show that the patched module for weight $\kappa_1$ is supported on all of $V^{',\rig}|_{\kappa_1}$.  But a dense open subspace of $V^{',\rig}|_{\kappa_1}$ is a subspace of the generic fiber of one of the potentially Barsotti--Tate deformation rings constructed in ~\cite{kisin2009pst}.  Then we may apply ~\cite[Theorem 3.4.11]{kisin2009moduli} (with some hypotheses relaxed in ~\cite{gee2006modularity}).
\end{proof}

\appendix

\section{Extensions of Zariski-closed subsets}

The paper ~\cite{lourenco} proves Riemann extension theorems for functions on normal pseudorigid spaces and normal excellent formal schemes; in this appendix we use those results to extend certain Zariski-closed adic subsets (in the sense of ~\cite[\textsection 2.1]{johansson-newton17}) of pseudorigid spaces over missing subsets of codimension at least $2$.

\begin{prop}\label{prop: formal model of subspace}
	Let $\mathfrak{X}$ be a normal excellent formal scheme, which is nowhere discrete.  If $Z\subset X:=\mathfrak{X}^{\an}$ is a Zariski-closed adic subset, then there is a closed formal subscheme $\mathfrak{Z}\subset\mathfrak{X}$ such that $Z=\mathfrak{Z}^{\an}$.
\end{prop}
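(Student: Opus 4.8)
The plan is to build $\mathfrak Z$ locally on $\mathfrak X$ as the closed subscheme cut out by the ideal of functions vanishing on $Z$, the whole content being that this ideal is large enough to recover $Z$ on the nose.

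First I would reduce to $\mathfrak X=\Spf A$ affine, with $A$ normal, excellent (hence Noetherian), and nowhere discrete; write $X:=(\Spf A)^{\an}$, let $\mathrm{sp}\colon X\to\mathfrak X$ be the specialization map and $\calI_Z\subseteq\O_X$ the coherent (radical) ideal sheaf of $Z$. Nowhere-discreteness says precisely that $X$ is schematically dense in $\mathfrak X$, so $A=\Gamma(\mathfrak X,\O_{\mathfrak X})\hookrightarrow\Gamma(\mathfrak X,\mathrm{sp}_*\O_X)=\Gamma(X,\O_X)$, and I set
\[ I:=A\cap\Gamma(X,\calI_Z)\subseteq A. \]
Because $A$ is Noetherian, $A/I$ is again a complete adic ring, so $\mathfrak Z:=\Spf(A/I)$ is a closed formal subscheme of $\mathfrak X$, and $\mathfrak Z^{\an}$ is the zero locus of $I\O_X$ in $X$. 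Every element of $I$ is a section of $\calI_Z$, hence vanishes on $Z$, giving $Z\subseteq\mathfrak Z^{\an}$; the construction is canonical (compatible with Zariski-localization on $A$, once the reverse inclusion below is known, because nowhere-discreteness forces the two candidate ideals on an affine localization to agree as radical ideals), so the affine pieces glue to a closed formal subscheme of a general $\mathfrak X$.

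The real work is the reverse inclusion $\mathfrak Z^{\an}\subseteq Z$, i.e. that $I$ generates $\calI_Z$ up to radical locally on $X$. Here I would exhaust $X$ by an increasing chain of affinoid pseudorigid opens $X=\bigcup_n X_n$ (a Stein exhaustion, so that by a Kiehl/Cartan-B argument $\calI_Z$ is generated by global sections and $\Gamma(X,\calI_Z)$ surjects onto the finitely generated ideal $J_n\subseteq\O(X_n)$ cutting out the Zariski-closed subset $Z\cap X_n$ — this already uses that $Z$ is \emph{genuinely closed} in $X$, so that no component of $Z$ escapes to the boundary and the $J_n$ fit into one global coherent sheaf). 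Each $X_n$ is $\mathfrak X_n^{\an}$ for an admissible formal model $\mathfrak X_n\to\mathfrak X$, and by the standard theory of formal models on affinoids $J_n$ extends to a coherent ideal of $\O_{\mathfrak X_n}$; using that $\mathfrak X$, hence $\mathfrak X_n$, is normal and excellent, one checks that after multiplying by a suitable power of an ideal of definition $J_n$ is already generated by the image of $A$, so $J_n\subseteq\sqrt{I\,\O(X_n)}$. Noetherianity of $A$ makes the descending chain $I_n:=\{f\in A:f|_{Z\cap X_n}=0\}$ stabilize, with limit $I$, whence $\mathfrak Z^{\an}\cap X_n\subseteq Z\cap X_n$ for every $n$, and therefore $\mathfrak Z^{\an}\subseteq Z$.

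The hard part will be this last descent: we must realize $Z$ as the analytic locus of a closed subscheme of the \emph{given} $\mathfrak X$, with no freedom to blow $\mathfrak X$ up. On a single affinoid chart this is classical (an ideal of an affinoid has a formal model); the difficulty is to carry it out uniformly along the non-quasicompact exhaustion and to glue back to an honest ideal of $A$. This is exactly where the hypotheses pull their weight: $Z$ closed in $X$ keeps its local geometry bounded and makes the $J_n$ part of one global coherent sheaf; normality and excellence control the admissible blow-ups and the clearing-of-denominators step; Noetherianity yields completeness of $A/I$ and stabilization of the exhaustion; and nowhere-discreteness ensures that ideals of $A$ are faithfully detected by their vanishing on $X$, so that $I$ is the right ideal to take.
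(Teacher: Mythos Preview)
Your overall strategy—reduce to the affine case, define $I$ as the intersection of $A$ with the global sections of the ideal sheaf of $Z$, and show this $I$ cuts out $Z$—matches the paper's. But the execution diverges, and your version has a genuine gap at the step you yourself flag as ``the hard part''.

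The paper does not use an infinite Stein exhaustion. It covers $X$ by the \emph{finite} collection of affinoids $U_i=\Spa R\langle J/f_i\rangle$ coming from generators $f_1,\dots,f_n$ of an ideal of definition $J$. It then works with $\mathcal I^+:=\mathcal I\cap\O_X^+$ rather than $\mathcal I$, and invokes Louren\c{c}o's theorem that $R=\Gamma(X,\O_X^+)$ (this is where normality and excellence enter) so that $I:=\Gamma(X,\mathcal I^+)\subset R$. The equality $R_{i_0}\otimes_R I=\mathcal I(U_{i_0})$ is then obtained by tensoring the sheaf exact sequence for $\mathcal I^+$ on the finite cover with $R_{i_0}^\circ$ and completing; noetherianity preserves exactness, and inverting a pseudo-uniformizer turns the $\mathcal I^+$-terms into the $\mathcal I$-terms.

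Your route via $\Gamma(X,\mathcal I_Z)\subset\Gamma(X,\O_X)$ and clearing denominators does not go through as written. The assertion ``after multiplying by a suitable power of an ideal of definition, $J_n$ is already generated by the image of $A$'' requires that a global section $g\in\Gamma(X,\O_X)$ become power-bounded after multiplying by some $f^N$ with $f$ in the ideal of definition. This is false in general: on a pseudorigid open disk, $\Gamma(X,\O_X)$ contains functions whose norms of coefficients grow faster than any fixed negative power of the uniformizer, so no single $f^N$ clears them into $\O_X^+$. Working with $\O_X^+$ from the outset (as the paper does) avoids this issue entirely, and the finite cover makes the sheaf-theoretic argument a genuine finite computation rather than a limiting one. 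Separately, your stabilization claim ``the descending chain $I_n$ stabilizes by noetherianity'' is not valid—noetherian rings satisfy ACC, not DCC—though as you implicitly note, the argument does not actually need stabilization once the inclusion $J_n\subset\sqrt{I\,\O(X_n)}$ is established for each $n$.
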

\begin{proof}[Proof of Proposition~\ref{prop: formal model of subspace}]
	We may assume that $\mathfrak{X}=\Spf R$, where $R$ is a normal excellent domain with ideal of definition $J=(f_1,\ldots,f_n)$.  Then by the definitions of ~\cite[\textsection 2.1]{johansson-newton17}, there is a coherent sheaf $\mathcal{I}\subset\O_X$ of ideals such that $Z=\{x\in X\mid \mathcal{I}_x\neq \O_{X,x}\}$.  We need to show that there is an ideal $I\subset R$ whose associated sheaf agrees with $\mathcal{I}$ on $X$.

	We define $\mathcal{I}^+:=\mathcal{I}\cap\O_X^+$, and we set $I:=\Gamma(X,\mathcal{I}^+)$; by ~\cite[Proposition 6.2]{lourenco}, $R=\Gamma(X,\O_X^+)$, so we may view $I$ as an ideal of $R$.  It remains to show that for each affinoid open subspace $\Spa R'\subset X$, $R'\otimes_RI=\mathcal{I}(\Spa R')$.  To see this, we observe that we have a finite cover $X=\cup_i\Spa R\left\langle \frac{J}{f_i}\right\rangle$, so it suffices to check this with $R'=\Spa R\left\langle \frac{J}{f_i}\right\rangle$.

	Setting $R_i:=R\left\langle \frac{J}{f_i}\right\rangle$ and $U_i=\Spa R\left\langle \frac{J}{f_i}\right\rangle$, we have an exact sequence of $R$-modules
	\[	0\rightarrow I\rightarrow \prod_i\mathcal{I}^+(U_i)\rightrightarrows \prod_{i,j}\mathcal{I}^+(U_i\cap U_j)	\]
	For any fixed index $i_0$, we may tensor with $R_{i_0}^\circ$ and complete $f_{i_0}$-adically; as $R$ is noetherian, our sequence
	\[	0\rightarrow R_{i_0}^\circ\htimes_RI\rightarrow \prod_i\left(R_{i_0}^\circ\htimes_R\mathcal{I}^+(U_i)\right)\rightrightarrows \prod_{i,j}\left(R_{i_0}^\circ\htimes_R\mathcal{I}^+(U_i\cap U_j)\right)	\]
	remains exact.  But $R_{i_0}^\circ\htimes_R\mathcal{I}^+(U_i)$ generates $\mathcal{I}(U_{i_0}\cap U_i)$ and $R_{i_0}^\circ\htimes_R\mathcal{I}^+(U_i\cap U_j)$ generates $\mathcal{I}(U_{i_0}\cap U_i\cap U_j)$ after inverting a pseudouniformizer $u_{i_0}$ of $R_{i_0}$ for all $i,j$, and $\{U_{i_0}\cap U_i\}_i$ is a cover of $U_{i_0}$, so in fact $R_{i_0}\htimes_RI=\mathcal{I}(U_{i_0})$, as desired.
\end{proof}

\begin{cor}
	\label{cor: integral model in product}
	Let $E$ be a $p$-adic field, let $\mathfrak{X}=\Spa R_1$, where $R_1:=\O_E[\![x_1,\ldots,x_{n_1}]\!]\left\langle y_1,\ldots,y_{n_2}\right\rangle/I$, and let $\mathfrak{Y}:=\Spa R_2$, where $R_2=\O_E[\![z_1,\ldots,z_{m_1}]\!]\left\langle w_1,\ldots,w_{m_2}\right\rangle/J$.  Suppose that $R_1$ has dimension at least $2$.  
	Suppose that $Z\subset \mathfrak{X}^{\an}\times_{\O_E}\mathfrak{Y}$ is a Zariski-closed subset and that there is some integer $N\geq 1$ such that $Z\cap \Spa R_1\left\langle\frac p u, \left\{\frac{x_i}{u}\right\}\right\rangle\left[\frac 1 u\right]\times\mathfrak{Y}$ is contained in the rational domain $\{z_j^N\leq u\neq 0 \text{ for all } j=1,\ldots,m\}$ for each $u\in \{\varpi_E,x_1,\ldots,x_{n_1}\}$.  Then there is a closed formal subscheme $\mathfrak{Z}\subset \mathfrak{X}\htimes_{\O_E}\mathfrak{Y}$ such that $\mathfrak{Z}^{\an}\cap (\Spa R_1)^{\an}\times_{\O_E}\mathfrak{Y}=Z$ (where the intersection is taken inside $\left(\Spa R_1\htimes_{\O_E}R_2\right)^{\an}$).
\end{cor}
\begin{proof}
	Replacing $Z$ with $Z\cap V(I)^{\an}\cap V(J)^{\an}$, we may assume that $I=J=(0)$, so that $R_1=\O_E[\![\{x_i\}]\!]\left\langle \{y_k\}\right\rangle$ and $R_2=\O_E[\![\{z_j\}]\!]\left\langle \{w_\ell\}\right\rangle$.  Then by Proposition~\ref{prop: formal model of subspace} it suffices to extend $Z$ to a Zariski-closed subset of $\left(\Spa R_1\htimes_{\O_E}R_2\right)^{\an}$.  This analytic locus, in turn, is covered by the affinoid pseudorigid spaces 
	\[	\mathcal{V}_u:=\Spa \O_E[\![\{x_i\}_i,\{z_j\}_j]\!]\left\langle \frac{\varpi_E}{u},\left\{\frac{x_i}{u}\right\}_i,\{y_k\}_k, \left\{\frac{z_j}{u}\right\}_j,\{w_\ell\}_\ell\right\rangle\left[\frac 1 u\right]	\]
		for $u\in \{\varpi_E,x_1,\ldots,x_{n_1}\}$
and
\[	\mathcal{V}_{z_{j_0}}:=\Spa \O_E[\![\{x_i\}_i,\{z_j\}_j]\!]\left\langle\frac{\varpi_E}{z_{j_0}}, \left\{\frac{x_i}{z_{j_0}}\right\}_i,\{y_k\}_k,\left\{\frac{z_j}{z_{j_0}}\right\}_j,\{w_\ell\}_\ell\right\rangle\left[\frac{1}{z_{j_0}}\right]	\]
for $j_0=1,\ldots,m$.  Since the $\mathcal{V}_u$ are contained in $(\Spa R_1)^{\an}\times_{\O_E}R_2$, we only need to extend the intersections $Z\cap \mathcal{V}_{z_{j_0}}$ to Zariski-closed subsets of $\mathcal{V}_{z_{j_0}}$.

We can cover each $\mathcal{V}_{z_{j_0}}$ by its open subspaces defined by inequalities
\[	\mathcal{V}_{z_{j_0},\varpi_E,1}:=\{\lvert z_{j_0}^{N+1}\rvert\leq \lvert \varpi_E\rvert\},\qquad \mathcal{V}_{z_{j_0},i,1}:= \{\lvert z_{j_0}^{N+1}\rvert\leq \lvert x_{i}\rvert\}	\]
for $i=1,\ldots, n_1$ and
\[	\mathcal{V}_{z_{j_0},2}:=\{ \lvert\varpi_E\rvert\leq \lvert z_{j_0}^{N+1}\rvert\neq 0\text{ and } \lvert x_i\rvert\leq \lvert z_{j_0}^{N+1}\rvert\neq 0\text{ for all }i\}	\]

so it suffices to find suitable Zariski-closed subsets of each of these spaces.  

By assumption, $Z\cap \mathcal{V}_{z_{j_0},2}$ is empty.  Moreover,
\[	\mathcal{V}_{z_{j_0},p,1}\subset (\Spa R_1)^{\an}\times_{\O_E}R_2	\]
and
\[	\mathcal{V}_{z_{j_0},i,1}\subset (\Spa R_1)^{\an}\times_{\O_E}R_2 \]
since the conditions $\lvert z_{j_0}^{N+1}\rvert\leq \lvert \varpi_E\rvert$ and $z_{j_0}\neq 0$ imply $\varpi_E\neq 0$ (and similarly for $\{\lvert z_{j_0}^{N+1}\rvert\leq \lvert x_{i}\rvert\}$ and $z_{j_0}\neq 0$). Thus, $Z\cap \mathcal{V}_{z_{j_0},p,2}$, $Z\cap \mathcal{V}_{z_{j_0},i,2}$ are defined by sheaves of ideals which agree on intersections $\mathcal{V}_{z_{j_0},i,1}\cap\mathcal{V}_{z_{j_0},i',1}$.   

By construction, these sheaves agree on the overlaps $\mathcal{V}_{z_{j_0},p,1}\cap\mathcal{V}_{z_{j_0},p,2}=\{\lvert z_{j_0}^{N+1}\rvert=\lvert \varpi_E\rvert\}$ and $\mathcal{V}_{z_{j_0},i,1}\cap\mathcal{V}_{z_{j_0},i,2}=\{\lvert z_{j_0}^{N+1}\rvert=\lvert x_i\rvert\}$.  We have therefore extended the sheaf of ideals defining $Z\cap \mathcal{V}_{z_{j_0}}\cap (\Spa R_1)^{\an}\times_{\O_E}R_2$ to a sheaf of ideals on all of $\mathcal{V}_{z_{j_0}}$, as desired.
\end{proof}

\bibliographystyle{amsalpha}
\bibliography{bc-tate}

\end{document}